\documentclass[review,3p,times]{elsarticle}

\usepackage{amsmath}
\usepackage{amssymb}
\usepackage{amsthm}
\usepackage{bm}

\newtheorem{theorem}{Theorem}
\newtheorem{lemma}{Lemma}
\newtheorem{proposition}{Proposition}
\newtheorem{remark}{Remark}

\usepackage{graphicx}
\usepackage{array}
\usepackage{booktabs}
\usepackage{adjustbox}
\usepackage{float}
\usepackage{subcaption}
\usepackage{url}

\usepackage{algorithm}
\usepackage{algpseudocode}

\journal{ISA Transactions}

\newcommand{\NEESH}{0.98}
\newcommand{\NEESF}{0.91}
\newcommand{\NEESV}{0.04}
\newcommand{\rhoH}{0.17}
\newcommand{\rhoF}{0.18}
\newcommand{\rhoV}{0.51}

\begin{document}

\begin{frontmatter}

\title{Extended Kalman Filter-Based State Estimation for a
Nine-Compartment Nonlinear Epidemic Model\textendash\\
Convergence Analysis and In-Silico Benchmark
Calibrated on the {COVID-19} Third Wave in {Italy}}

\author[Palermo]{Lokman Rachid Melhani}
\ead{melhanilokmanrachid@gmail.com}
\author[Palermo]{Antonino Sferlazza}
\author[Palermo a]{Dominique Persano Adorno}
\author[Palermo]{Filippo D'Ippolito}
\author[InEmbryo]{Antonino Lo Burgio}
\author[Palermo b]{Alberto Firenze}

\address[Palermo]{Department of Engineering, University of Palermo,
Viale delle Scienze, 90128 Palermo, Italy}
\address[Palermo a]{Department of Physics and Chemistry ``E. Segr\'{e}'', University of Palermo, Viale delle Scienze, 90128 Palermo, Italy}
\address[Palermo b]{Department of Internal Medicine ``Promise,''
University of Palermo, 90127 Palermo, Italy}
\address[InEmbryo]{InEmbryo S.r.l.s., Via Rosario Riolo 60,
90141 Palermo, Italy}

\begin{abstract}
The real-time management of an infectious disease outbreak
requires continuous knowledge of the full epidemic state,
including quantities that surveillance systems cannot directly
measure: the size of the latent exposed population, the number
of actively infectious individuals stratified by strain and
transmission potential, the vaccinated immune fraction, and the
recovered pool that determines residual susceptibility. This paper
addresses the state-estimation problem for the nine-compartment
nonlinear epidemic model of the companion
study~\cite{melhani2026arxiv} that incorporates two
co-circulating viral strains with identical transmissibility
multiplier $c_2 = c_P = 1.5$, a super-spreader
subpopulation, partial vaccine-derived immunity with waning,
explicit hospitalization dynamics, and disease-induced mortality.
The time-varying transmission and vaccination rates are treated
as known inputs identified from data by a companion spline-based
calibration procedure, so that the remaining problem is precisely
the reconstruction of all nine biological states from the three
observables systematically reported by public health agencies:
active hospitalizations $H$, cumulative fatalities $F$, and the
vaccinated immune stock $V$.
This paper is a \emph{methodological contribution}: all
numerical experiments use synthetic measurements generated from
the calibrated model, so the reported RMSE figures are
methodology benchmarks and should not be interpreted as
real-time predictive accuracy on live surveillance data.

The paper makes four contributions. The Extended Kalman Filter
construction itself is standard; the contributions lie in the
observability and convergence theory that surround it, and in
the principled covariance design, rather than in the filter
recursion. First, a nonlinear observability analysis
within the Lie-derivative framework computes the analytical
observability codistribution. A six-step algebraic
derivation (Lemma~\ref{lem:detO9}) proves in closed form that
at Lie levels $0$, $1$, $2$ the matrix is rank-deficient at
the calibrated symmetric parameter values
$\delta_i = \delta_p$, $r_1 = r_2$, yielding
$|\det(\mathcal{O}_9)| =
\delta_w\,\gamma_a^{\,2}\kappa\rho_2 w_1^{\,2}
(\delta_i-\delta_p)^2|r_1-r_2|$
(where $\delta_w$ denotes the waning rate of the recovered pool)
with a two-dimensional kernel consisting of an
$I_2 \leftrightarrow P$ swap direction and an $R$-anchored
direction. Augmentation by the third Lie derivative restores
full rank $9$; the treatment-recovery rate $r_2$ is identified
as the structural symmetry-breaking parameter. Second, on this
observability basis an Extended Kalman Filter (EKF) is designed
on the Euler-discretized dynamics, with the analytical
$9\times 9$ state Jacobian in closed form and the Joseph
stabilized covariance update. Third, the local exponential
boundedness of the estimation error in mean square is
established as a full theorem by verifying the four
hypotheses of the Reif--G\"unther--Yaz--Unbehauen framework for
the specific system at hand; in particular, hypothesis (A1)
(uniform covariance bounds) is established via an explicit
8-step observability Gramian argument together with the dual
controllability bound supplied by a positive-definite process
noise covariance. The proof exploits the
fact that all nonlinearities of the vector field are bilinear
and that the measurement map is exactly linear, yielding
closed-form quadratic remainder bounds with a global radius
$\epsilon_\varphi = \infty$; the same argument structure
extends to the broader class of bilinear-drift,
linear-output systems, provided the system-specific
observability and controllability conditions are verified in
each case, of which mass-action epidemic models are
one instance. Fourth, the process and
measurement noise covariances $Q$ and $R$ are guided by
companion calibration residuals and assessed a posteriori
through a chi-squared NEES consistency test and an innovation
autocorrelation analysis, the latter revealing residual
correlation in the vaccination channel that we report as a
limitation.

The methodology is demonstrated on in-silico experiments
calibrated against Italian COVID-19 Third Wave data
(January--May 2021). All validation is performed on
\emph{synthetic} measurements generated from the calibrated
model; the reported RMSE figures are therefore methodology
benchmarks rather than real-world prediction accuracy, a
limitation discussed explicitly in
Section~\ref{sec:val_limitations}.

Post-convergence relative RMSE values range from $0.07\%$ for
the vaccinated stock to $2.72\%$ for the exposed compartment
in the synthetic experiments; the filter design is assessed by
a chi-squared NEES consistency test and the Anderson
white-innovation autocorrelation test as internal consistency
checks. A parameter-mismatch study, in which the filter is run
with nominal parameters on data generated from perturbed ones,
shows that the measured and directly-coupled channels remain
accurate under model error of up to $\pm30\%$ while the
indirectly observed compartments degrade gracefully. The
proposed framework provides the state-feedback
infrastructure required for future Model Predictive Control
of non-pharmaceutical interventions.
\end{abstract}

\begin{keyword}
COVID-19; Extended Kalman Filter; State Estimation; Epidemic Modeling;
Observability; Nonlinear Systems; Convergence Analysis;
Lie Derivatives; Stochastic Filtering; SEIR.
\end{keyword}

\end{frontmatter}

%%=============================================================
\section{Introduction}
%%=============================================================

\subsection{Motivation and Context}

The COVID-19 pandemic caused by SARS-CoV-2 demonstrated with
exceptional clarity that the effectiveness of public-health
interventions is limited not only by their intrinsic efficacy
but also by the quality of the situational awareness on which
they are based \cite{flaxman2020estimating,ferguson2020impact}.
Reported hospitalizations, confirmed deaths, and vaccination
counts constitute a small and imperfect window into the true
epidemic state \cite{hao2020reconstruction,kucharski2020early}.
The latent exposed population, the number of actively infectious
individuals not yet symptomatic, the residual immune fraction of
the population, and the growing vaccinated subpopulation are all
epidemiologically critical quantities that no reporting system
directly measures. Reconstructing these hidden states from
available observations in real time is fundamentally a
state-estimation problem \cite{kalman1960new,simon2006optimal}.

Mathematical models based on compartmental ordinary differential
equations have been the dominant framework for quantitative
epidemic analysis since the foundational work of Kermack and
McKendrick \cite{kermack1927contribution}. The SIR and SEIR
structures have been extended in numerous directions to represent
the biological complexity of large-scale pandemics
\cite{hethcote2000mathematics,brauer2012mathematical,diekmann2000mathematical},
including super-spreader subpopulations
\cite{lloydsmith2005superspreading,endo2020estimating},
explicit hospitalization dynamics
\cite{giordano2020modelling,ramos2021modeling}, vaccine-derived
partial immunity with waning
\cite{watson2022global,lavine2021immunological}, and the
treatment of transmission and vaccination rates as time-varying
inputs \cite{tang2020estimation,giordano2021modeling}.

\subsection{Limitations of Existing State-Estimation Approaches}
\label{sec:limitations}

Despite the rich literature on compartmental epidemic modeling,
the problem of real-time state estimation for extended
multi-compartment models has received comparatively limited
theoretical attention. The majority of COVID-19 data-assimilation
studies focus on parameter estimation rather than full state
reconstruction \cite{rahimi2023review,awwad2024stochastic}.
When state-estimation methods are applied, they typically operate
under three significant simplifications.

\emph{First}, the model is usually restricted to small state
spaces of three to six compartments, where the relationship
between observation and state can be analyzed by inspection
without formal observability theory
\cite{engbert2021sequential,ghostine2021extended}.

\emph{Second}, the observability of the chosen measurement set
is assumed rather than verified: estimators are applied without
establishing that the selected outputs contain sufficient
information to reconstruct the full state, a condition whose
failure guarantees systematically biased estimates regardless
of filter tuning \cite{narendra1987persistent,van2001observability}.

\emph{Third}, when an Extended Kalman Filter is used, its
convergence properties are usually invoked informally rather
than proven for the specific system at hand. The general
discrete-time convergence theorem of Reif, G\"unther, Yaz and
Unbehauen \cite{reif1999stochastic} provides a clean route to
local exponential mean-square boundedness, but its application
to a nine-compartment epidemic model with bilinear
force-of-infection terms requires an explicit verification of
all four hypotheses, including a closed-form Taylor-remainder
bound. This verification has not, to our knowledge, been carried
out in the epidemic-estimation literature.

A further limitation is the treatment of time-varying parameters.
Because $\beta(t)$ changes substantially over a multi-wave
pandemic, estimation approaches that model it as a static unknown
are structurally misspecified for any window longer than a few
weeks \cite{he2020seir}. Joint state-parameter estimation
dramatically increases the dimensionality of the estimation
problem and introduces the risk of non-identifiability
\cite{audoly2002global,raue2009structural}.

Table~\ref{tab:comparison} summarizes the principal characteristics
of representative Kalman-type epidemic estimators in the recent
literature.

\begin{table}[htbp]
\centering
\caption{Representative Epidemic State-Estimation Approaches
(Kalman and Particle Family). Acronyms: EKF, extended Kalman
filter; EnKF, ensemble Kalman filter; PCHIP, piecewise cubic
Hermite interpolating polynomial.}
\label{tab:comparison}
\begin{adjustbox}{width=\columnwidth}
\begin{tabular}{lccccccc}
\toprule
\textbf{Reference} & \textbf{Dim.}
  & \textbf{Filter type} & \textbf{Formal obs.}
  & \textbf{Jacobian} & \textbf{Time-varying $\beta(t)$}
  & \textbf{Conv.\ proof} & \textbf{Data source} \\
\midrule
Ghostine et al.\ \cite{ghostine2021extended}
  & 7  & EnKF        & No  & N/A        & No  & No & Real data \\
Engbert et al.\ \cite{engbert2021sequential}
  & 4  & EnKF & No  & N/A        & No  & No & Real data \\
Zhu et al.\ \cite{zhu2021extended}
  & 6  & EKF         & No  & Numerical  & No  & No & Synthetic \\
 Sameni \cite{sameni2020mathematical}
   & 5  & EKF         & No  & Numerical  & No  & No & Real data \\
Rahmani et al.\ \cite{rahimi2023review}
  & --  & Particle    & No  & N/A        & Partial & No & Synthetic \\
\textbf{This work}
  & \textbf{9}
  & \textbf{EKF}
  & \textbf{Yes (Lie rank)}
  & \textbf{Analytical}
  & \textbf{Yes (PCHIP)}
  & \textbf{Yes (Reif-type)}
  & \textbf{Synthetic} \\
\bottomrule
\end{tabular}
\end{adjustbox}
\footnotesize{\emph{Note}: ``Formal obs.'' indicates whether
observability is established via a rank condition
(e.g.\ Hermann--Krener) rather than assumed or verified
by inspection. ``Conv.\ proof'' indicates a
mean-square convergence theorem for the specific system.}
\end{table}

\paragraph{Choice of estimator.}
Among nonlinear estimators, the Unscented Kalman Filter (UKF)
\cite{wan2000unscented}, particle filters
\cite{rahimi2023review}, and Moving Horizon Estimation (MHE)
\cite{rawlings2009model} are the principal alternatives to the
EKF. We adopt the EKF deliberately, for reasons specific to this
problem rather than by default. First, the vector
field~\eqref{eq:fvec} has a state Jacobian available in closed
form~\eqref{eq:Jf}, so the chief practical drawback of the EKF
relative to the UKF --- the cost and inaccuracy of numerical
linearization --- does not arise here. Second, because every
nonlinearity is bilinear, the EKF linearization remainder is
exactly second order and admits the closed-form bound of
Section~\ref{sec:convergence}; this is precisely the structure
that the EKF convergence theory of~\cite{reif1999stochastic}
exploits, and it makes the EKF the estimator for which a
mean-square boundedness theorem can be established analytically
for this system. The UKF and particle filters lack a comparably
clean convergence statement for a system of this dimension,
while MHE, though attractive for its constraint handling, raises
an online nonlinear program at each step and shifts the
analysis from observability rank conditions to arrival-cost
design --- a different and heavier theoretical apparatus than the
Lie-rank/Riccati route taken here. A quantitative comparison
against the UKF, EnKF, and a particle filter on this model is a
worthwhile empirical study. We expect it to show the EKF and UKF
performing almost identically in accuracy here --- because the
exact closed-form Jacobian~\eqref{eq:Jf} removes the linearization
error that normally separates them, and the only nonlinearities
are bilinear so the second-order terms the UKF captures are small
--- while the EKF retains a per-step cost advantage from not
propagating sigma points, and the particle filter, though robust
to non-Gaussianity, would be substantially more expensive at this
state dimension without a clear accuracy gain in the present
Gaussian-noise, exactly-linear-output setting. These are
predictions grounded in the model structure rather than measured
results; a full benchmark reporting RMSE, per-step computation
time, and convergence horizon for each estimator on the identical
synthetic setup is left as a focused empirical companion to the
present theoretical contribution, and is the natural vehicle for
confirming the structural expectation above.

\subsection{Contributions of This Work}

\emph{Scope note.} All numerical results in this paper are
obtained from synthetic experiments in which the EKF is tested
on measurements generated by the same model that was used for
calibration. The RMSE figures of Table~\ref{tab:ekf_metrics}
are therefore \emph{methodology benchmarks} establishing that
the filter design is internally consistent and converges as the
theory predicts. They are not measures of real-time predictive
accuracy on live surveillance streams, which would require
independent held-out data and robustness to structural model
mis-specification. Readers should bear this distinction in mind
throughout. The limitations are discussed in
Section~\ref{sec:val_limitations}.

This paper makes the following specific contributions, each
addressing a gap identified in Section~\ref{sec:limitations}:

\begin{itemize}
  \item \textbf{Observability.}
    The nine-compartment model of~\cite{melhani2026arxiv} is
    cast in state-space form, and the observability codistribution
    is computed in closed form through Lie levels $0,1,2,3$.
    A six-step algebraic derivation (Lemma~\ref{lem:detO9})
    establishes the exact factorized determinant magnitude
    $|\det(\mathcal{O}_9)| = \delta_w\gamma_a^{\,2}\kappa\rho_2
    w_1^{\,2}(\delta_i-\delta_p)^2|r_1-r_2|$,
    identifies a rank-$7$ degeneracy at the calibrated symmetric
    parameter point, and characterizes the two-dimensional kernel;
    augmentation by $\mathcal{O}_3$ restores rank $9$ and
    yields local observability of the full state from $(H,F,V)$.
  \item \textbf{Filter design.}
    A discrete-time EKF is designed on the Euler-discretized
    dynamics with the analytical $9\times 9$ state Jacobian
    in closed form and the Joseph stabilized covariance update.
  \item \textbf{Convergence theorem.}
    The local exponential boundedness of the estimation error
    in mean square is established as a full theorem
    (Theorem~\ref{thm:convergence}) by verifying all four
    hypotheses of \cite{reif1999stochastic}. Hypothesis (A1)
    --- the pivotal uniform Riccati bound --- is established via
    Lemma~\ref{lem:gramian}, which proves that the $8$-step
    discrete observability Gramian of the linearized pair
    $(F_k, C)$ is uniformly lower-bounded along the calibrated
    trajectory, bridging the nonlinear observability result of
    Proposition~\ref{prop:obs} to the linear-system Riccati
    theory of~\cite{anderson1979optimal}; the matching upper
    Riccati bound follows from the dual uniform-controllability
    condition supplied by $Q_d \succ 0$. The proof of (A3)
    relies on a closed-form quadratic remainder bound that
    exploits the bilinear structure of all nonlinearities.
  \item \textbf{Principled Q/R selection.}
    The measurement-noise covariance $R$ is set using the
    companion calibration RMSE as a guide for the hospitalization
    and fatality channels, with the vaccination channel
    deliberately assigned a tighter noise level on the grounds
    that its large calibration residual reflects structural
    model--data mismatch rather than measurement error.
    The process-noise covariance $Q$ follows
    a multiplicative-noise design parameterized by
    coupling-aware fractional uncertainties $\epsilon_i$. The
    resulting tuning is validated a posteriori by a chi-squared
    NEES consistency test and an Anderson white-innovation
    autocorrelation test.
\end{itemize}

The paper is organized as follows. Section~\ref{sec:model}
presents the model and the observable set.
Section~\ref{sec:observability} establishes observability through
the Lie-derivative rank condition. Section~\ref{sec:ekf} designs
the EKF, proves Theorem~\ref{thm:convergence}, and details the
Q/R construction. Section~\ref{sec:results} presents the
numerical results, including the NEES and whiteness checks.
Section~\ref{sec:conclusion} concludes.

%%=============================================================
\section{Mathematical Model}
\label{sec:model}
%%=============================================================

\subsection{Compartmental Structure}

The model partitions the total Italian population $N$ into
nine compartments: Susceptible ($S$), Exposed ($E$), Vaccinated
($V$), Infected with the original strain ($I_1$), Infected with
the mutant strain ($I_2$), Super-Spreaders ($P$), Hospitalized
($H$), Recovered ($R$), and cumulative Fatalities ($F$).

The partitioning is motivated by three epidemiological
considerations \cite{giordano2020modelling,ndairou2020mathematical}:
\textit{disease heterogeneity} (two strains plus super-spreaders
\cite{lloydsmith2005superspreading,endo2020estimating}),
\textit{outcome stratification} (separate hospitalized and
recovered pools), and \textit{immunity dynamics} (vaccinated and
recovered individuals with distinct waning rates
\cite{watson2022global,lavine2021immunological}).

\subsection{State-Space Formulation}

The epidemic dynamics are described by the following nonlinear
system, derived and calibrated in the companion
study~\cite{melhani2026arxiv},\footnote{The companion paper is
available as an arXiv preprint at
\url{https://arxiv.org/abs/2606.07413}.}
\begin{equation}
\dot{x} = f(x,u), \qquad y = h(x),
\label{eq:ss}
\end{equation}
where the state, input and output vectors are
\begin{equation}
x = \bigl[S, E, V, I_1, I_2, P, H, R, F\bigr]^\top \in \mathbb{R}^9,
\label{eq:statevec}
\end{equation}
\begin{equation}
u(t) = \bigl[\beta(t), w_1(t)\bigr]^\top \in \mathbb{R}^2_+,
\qquad
y = h(x) = \bigl[H, F, V\bigr]^\top \in \mathbb{R}^3.
\label{eq:output}
\end{equation}

The vector field $f : \mathbb{R}^9 \times \mathbb{R}^2 \to
\mathbb{R}^9$ is given explicitly by
\begin{equation}
f(x,u) =
\begin{bmatrix}
\Lambda + \psi V + \delta_w R
  - \bigl[\beta\tfrac{I_1}{N} + \beta_P\tfrac{P}{N}
    + \beta_2\tfrac{I_2}{N}\bigr]S
  - (\mu + w_1)S \\[6pt]
\bigl[\beta\tfrac{I_1}{N} + \beta_P\tfrac{P}{N}
    + \beta_2\tfrac{I_2}{N}\bigr]S
  + (1{-}\sigma)\bigl[\beta\tfrac{I_1}{N}
    + \beta_P\tfrac{P}{N} + \beta_2\tfrac{I_2}{N}\bigr]V
  - (\mu + \kappa)E \\[6pt]
w_1 S
  - (1{-}\sigma)\bigl[\beta\tfrac{I_1}{N}
    + \beta_P\tfrac{P}{N} + \beta_2\tfrac{I_2}{N}\bigr]V
  - (\mu + \psi)V \\[6pt]
\kappa\rho_1 E
  - (\gamma_a + \gamma_i + \delta_i + m + r_1 + \mu)I_1 \\[6pt]
\kappa(1{-}\rho_1{-}\rho_2)E + mI_1
  - (\gamma_a + \gamma_i + \delta_i + r_2 + \mu)I_2 \\[6pt]
\kappa\rho_2 E
  - (\gamma_a + \gamma_i + \delta_p + \mu)P \\[6pt]
\gamma_a(I_1 + I_2 + P)
  - (\gamma_r + \delta_h + \mu)H \\[6pt]
\gamma_i(I_1 + I_2 + P)
  + \gamma_r H + r_1 I_1 + r_2 I_2
  - (\mu + \delta_w)R \\[6pt]
\delta_i(I_1 + I_2) + \delta_p P + \delta_h H
\end{bmatrix}.
\label{eq:fvec}
\end{equation}

The strain-scaling constraints
\begin{equation}
\beta_P(t) = c_P\,\beta(t), \qquad
\beta_2(t) = c_2\,\beta(t),
\qquad c_P = 1.5, \quad c_2 = 1.5,
\label{eq:strain_scaling}
\end{equation}
reduce the three transmission functions to a single $\beta(t)$.
The value $c_P = 1.5$ reflects the elevated per-contact
infectiousness of super-spreaders
\cite{lloydsmith2005superspreading,endo2020estimating}; the
value $c_2 = 1.5$ lies within the $43\%$--$90\%$
transmissibility-advantage range reported for the B.1.1.7
(Alpha) variant over the ancestral strain during the Italian
Third Wave~\cite{melhani2026arxiv}. These values are the
calibrated multipliers of the companion
model~\cite{melhani2026arxiv}, and the EKF developed in this
paper uses the identical governing equations and parameter
values, ensuring complete consistency between the two papers.
All remaining constant parameters are listed in
Table~\ref{tab:params}.

\medskip
The output map selects the three observables: active
hospitalizations $H$ as a continuous near-term indicator of
healthcare burden \cite{giordano2020modelling}, cumulative
fatalities $F$ encoding epidemic history through an absorbing
integral \cite{flaxman2020estimating}, and the vaccinated immune
stock $V$ which directly governs breakthrough infection rates
\cite{watson2022global}. The ordering $h = [H,\,F,\,V]^\top$
places $H$ (state position~$7$) first, $F$ (position~$9$)
second, and $V$ (position~$3$) third; this is non-alphabetical
but groups the two epidemic-progress indicators ($H$ and $F$)
before the intervention-response indicator ($V$), which mirrors
the epidemiological priority ordering in the convergence
analysis. The explicit selectors are documented in
\eqref{eq:O0} and~\eqref{eq:meas_linear}. These three outputs
span distinct temporal regimes and biological mechanisms.

%%=============================================================
\section{Nonlinear Observability Analysis}
\label{sec:observability}
%%=============================================================

This section establishes the paper's central structural result:
that the full nine-dimensional state is observable from
$(H,F,V)$. The argument is necessarily algebraic, and a reader
interested primarily in the estimation results may take the
following summary and proceed to Section~\ref{sec:ekf}: at the
calibrated parameters the level-2 codistribution is rank-deficient
by two (Lemma~\ref{lem:detO9}, whose full determinant derivation
is deferred to Appendix~\ref{app:detproof}), the third Lie
derivative restores full rank (Proposition~\ref{prop:obs}), and
the resulting observability is confirmed numerically in
Section~\ref{sec:obs_numerical}. The detailed kernel and
condition-number analysis that follows substantiates these claims
and identifies the treatment-recovery rate $r_2$ as the
symmetry-breaking parameter, but is not required to follow the
filter design.

\subsection{Theoretical Framework}

For the nonlinear system~\eqref{eq:ss}, local observability is
certified by the Hermann--Krener rank
condition~\cite{hermann1977nonlinear}: the system is locally
observable at a point if the observability codistribution
\begin{equation}
\mathcal{O}(x) =
\begin{bmatrix}
\partial h/\partial x \\[2pt]
\partial (L_f h)/\partial x \\[2pt]
\partial (L_f^2 h)/\partial x \\
\vdots
\end{bmatrix}
\end{equation}
attains rank $n = 9$ at that
point~\cite{khalil2002nonlinear,isidori1995nonlinear,
nijmeijer1990nonlinear}. Here $L_f \varphi = (\partial \varphi
/\partial x) f(x,u)$. Each level adds $p = 3$ rows (one per
output), so levels $0$, $1$, $2$ produce a $9\times 9$ matrix
that can certify rank $9$.

\subsection{Block Structure of the Observability Matrix}

Each block has three rows (one per output $H, F, V$) and nine
columns for the state ordering of~\eqref{eq:statevec}.

\subsubsection*{First Block: $\mathcal{O}_0$}

Since $h(x) = [H,\,F,\,V]^\top$ and $H,F,V$ occupy positions
$7,9,3$ of $x$,
\begin{equation}
\mathcal{O}_0 =
\begin{bmatrix}
0 & 0 & 0 & 0 & 0 & 0 & 1 & 0 & 0 \\
0 & 0 & 0 & 0 & 0 & 0 & 0 & 0 & 1 \\
0 & 0 & 1 & 0 & 0 & 0 & 0 & 0 & 0
\end{bmatrix}.
\label{eq:O0}
\end{equation}

\subsubsection*{Second Block: $\mathcal{O}_1$}

Let
\[
\Phi := \tfrac{\beta I_1}{N} + \tfrac{1.5\beta P}{N}
        + \tfrac{1.5\beta I_2}{N},
\quad
\Phi_V := (1-\sigma)\Bigl(\tfrac{\beta I_1}{N}
        + \tfrac{1.5\beta P}{N}
        + \tfrac{1.5\beta I_2}{N}\Bigr),
\quad
\alpha_H := \gamma_r + \delta_h + \mu.
\]
Computing $L_f h$ along trajectories of~\eqref{eq:ss},
\begin{equation}
\mathcal{O}_1 =
\begin{bmatrix}
0 & 0 & 0 & \gamma_a & \gamma_a & \gamma_a
  & -\alpha_H & 0 & 0 \\[2pt]
0 & 0 & 0 & \delta_i & \delta_i & \delta_p
  & \delta_h & 0 & 0 \\[2pt]
w_1 & 0 & -(\Phi_V+\mu+\psi)
  & -\tfrac{(1{-}\sigma)\beta V}{N}
  & -\tfrac{1.5(1{-}\sigma)\beta V}{N}
  & -\tfrac{1.5(1{-}\sigma)\beta V}{N} & 0 & 0 & 0
\end{bmatrix}.
\end{equation}
Row~6 (third of $\mathcal{O}_1$) identifies $S$ through
$w_1 > 0$ and \emph{separates $I_1$ from the pair $(I_2, P)$}
through the factor $c_2 = c_P = 1.5$. Because the two
strain-multipliers are equal under the calibrated values, the
V output cannot by itself distinguish $I_2$ from $P$; that
separation is provided by the higher Lie derivatives of $H$
and $F$, where the recovery-rate hierarchy $\alpha_2 \neq \alpha_P$
breaks the symmetry, as shown next.

\subsubsection*{Third Block: $\mathcal{O}_2$}

Define
$\alpha_1 := \gamma_a+\gamma_i+\delta_i+m+r_1+\mu$,
$\alpha_2 := \gamma_a+\gamma_i+\delta_i+r_2+\mu$,
$\alpha_P := \gamma_a+\gamma_i+\delta_p+\mu$,
$\Gamma_E := \kappa[\delta_i(1-\rho_2)+\delta_p\rho_2]$,
$a_{33} := \Phi_V+\mu+\psi$. Then
\begin{align}
\tfrac{\partial L_f^2 h_1}{\partial x} &=
\bigl[0,\, \gamma_a\kappa,\, 0,\,
      {-}\gamma_a(\alpha_1{+}\alpha_H{-}m),\,
      {-}\gamma_a(\alpha_2{+}\alpha_H),\,
      {-}\gamma_a(\alpha_P{+}\alpha_H),\,
      \alpha_H^2,\, 0,\, 0\bigr],
\label{eq:dLf2h1}\\[3pt]
\tfrac{\partial L_f^2 h_2}{\partial x} &=
\bigl[0,\, \Gamma_E,\, 0,\,
      \delta_i(m{-}\alpha_1)+\delta_h\gamma_a,\,
      {-}\delta_i\alpha_2+\delta_h\gamma_a,\,
      {-}\delta_p\alpha_P+\delta_h\gamma_a,\,
      {-}\delta_h\alpha_H,\, 0,\, 0\bigr],\\[3pt]
\tfrac{\partial L_f^2 h_3}{\partial x} &=
\bigl[-w_1(\Phi{+}\mu{+}w_1),\, 0,\,
      a_{33}^2+w_1\psi,\,
      -\tfrac{\beta S w_1}{N},\,
      -\tfrac{1.5\beta S w_1}{N},\,
      -\tfrac{1.5\beta S w_1}{N},\,
      0,\, w_1\delta_w,\, 0\bigr].
\label{eq:dLf2h3}
\end{align}

\subsection{Rank Deficiency of the Level-2 Codistribution
under the Calibrated Symmetric Parameters}
\label{sec:rank_deficiency}

The first three Lie-derivative levels, taken together, are
\emph{not sufficient} for full observability under the calibrated
symmetric parameter values $\delta_i = \delta_p$ and $r_1 = r_2$
inherited from~\cite{melhani2026arxiv}. Throughout, by
\emph{full observability} (equivalently, \emph{local
observability}) we mean that the observability codistribution
attains full rank $9$ in the Hermann--Krener sense, so that
``restoring rank'' and ``achieving full observability'' refer to
the same event; the distinct numerical and practical notions are
treated separately in Section~\ref{sec:obs_three_notions}, and
the term \emph{uniform complete observability} is reserved
specifically for the Gramian condition of
Section~\ref{sec:convergence}. The following lemma
quantifies the deficiency exactly.

\begin{lemma}[Closed-form determinant of $\mathcal{O}_9$]
\label{lem:detO9}
Let $\mathcal{O}_9 := [\mathcal{O}_0^\top,\mathcal{O}_1^\top,
\mathcal{O}_2^\top]^\top \in \mathbb{R}^{9 \times 9}$ denote
the observability matrix at Lie levels $0,1,2$, computed
analytically from the vector field~\eqref{eq:fvec}
and the linear output~\eqref{eq:output}. Then, up to the sign
of the column permutation introduced in the proof,
\begin{equation}
\det(\mathcal{O}_9)
\;=\;
\pm\,\delta_w\,\gamma_a^{\,2}\,\kappa\,\rho_2\,w_1^{\,2}\,
   (\delta_i - \delta_p)^{2}\,(r_1 - r_2),
\label{eq:detO9}
\end{equation}
so that
$|\det(\mathcal{O}_9)| =
\delta_w\,\gamma_a^{\,2}\,\kappa\,\rho_2\,w_1^{\,2}\,
(\delta_i-\delta_p)^2\,|r_1-r_2|$.
The overall sign is fixed by the parity of the column permutation
of Step~1 and plays no role in any subsequent rank argument, for
which only the vanishing or non-vanishing of $\det(\mathcal{O}_9)$
is relevant.
\end{lemma}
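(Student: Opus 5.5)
The plan is to evaluate $\det(\mathcal{O}_9)$ by successive Laplace expansions along columns that have a single nonzero entry, which reduces the problem to a $4\times 4$ block in the columns $(E,I_1,I_2,P)$. That block is then factored by column operations that isolate the differences $r_1-r_2$ and $\delta_i-\delta_p$. The row and column reorderings are collected into a single permutation (the ``column permutation of Step~1''), whose parity only affects the overall sign.

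\emph{Steps 1--3 (peeling off trivial columns).} I will use the explicit rows \eqref{eq:O0}, $\mathcal{O}_1$ and \eqref{eq:dLf2h1}--\eqref{eq:dLf2h3}.
\begin{itemize}
\item Column $F$ (position~9) is zero except for the unit entry in the second row of $\mathcal{O}_0$. Expanding along it removes that row and column.
\item Columns $V$ (position~3) and $H$ (position~7) each have a unit entry in $\mathcal{O}_0$. Expanding along them removes the remaining two $\mathcal{O}_0$ rows. These expansions also remove the awkward entries $a_{33}^2+w_1\psi$, $-(\Phi_V+\mu+\psi)$ and $\alpha_H^2$, so the result cannot depend on them.
\item Column $R$ (position~8) is now nonzero only in the row $\partial L_f^2h_3/\partial x$, with entry $w_1\delta_w$. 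Expanding along it contributes the factor $w_1\delta_w$ and deletes that row. This eliminates all dependence on $\Phi$, $S$ and $\beta$.
\item Column $S$ then has a single nonzero entry, $w_1$, in the third row of $\mathcal{O}_1$. Expanding along it gives a second factor $w_1$ and deletes that row, whose $V$-dependent entries therefore also drop out.
\end{itemize}

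\emph{Step 4 (reduced block).} What remains is the $4\times4$ matrix with columns $(E,I_1,I_2,P)$ and rows
\[
(0,\gamma_a,\gamma_a,\gamma_a),\qquad
(0,\delta_i,\delta_i,\delta_p),
\]
followed by the reduced rows $\partial L_f^2h_1/\partial x$ and $\partial L_f^2h_2/\partial x$.

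\emph{Step 5 (exposing the symmetry-breaking differences).} I replace the $I_1$ column by $I_1-I_2$ and the $P$ column by $P-I_2$.
\begin{itemize}
\item Using $\alpha_1-m-\alpha_2=r_1-r_2$, the new $I_1$ column becomes $(0,0,-\gamma_a(r_1-r_2),-\delta_i(r_1-r_2))^\top$.
\item The new $P$ column has a zero first entry and second entry $\delta_p-\delta_i$.
\item The first row is now $(0,0,\gamma_a,0)$. Expanding along it gives a factor $\gamma_a$ and leaves a $3\times3$ matrix whose first row is $(0,0,\delta_p-\delta_i)$.
\item Expanding that row gives $(\delta_p-\delta_i)$ times the $2\times 2$ determinant
\[
\det\begin{bmatrix}\gamma_a\kappa & -\gamma_a(r_1-r_2)\\ \Gamma_E & -\delta_i(r_1-r_2)\end{bmatrix}
=\gamma_a(r_1-r_2)\bigl(\Gamma_E-\kappa\delta_i\bigr).
\]
\end{itemize}

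\emph{Step 6 (final identity).} Since $\Gamma_E-\kappa\delta_i=\kappa\rho_2(\delta_p-\delta_i)$, collecting the factors gives
\[
\det(\mathcal{O}_9)=\pm\,\delta_w\gamma_a^2\kappa\rho_2 w_1^2(\delta_i-\delta_p)^2(r_1-r_2),
\]
which is \eqref{eq:detO9}.

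The main obstacle is bookkeeping rather than conceptual. I must check that each of the eliminated columns ($F$, $V$, $H$, $R$, $S$) really has exactly one surviving nonzero entry at the moment it is expanded; this depends on performing the expansions in the stated order. I must also check the entries of \eqref{eq:dLf2h1} and the $\Gamma_E$ entry directly from \eqref{eq:fvec}, in particular that the $E$-coefficient in $L_f^2h_2$ is $\kappa[\delta_i(1-\rho_2)+\delta_p\rho_2]$. Only this structure produces the clean cancellation in Step~6; without it the factor $\rho_2$ would not emerge.
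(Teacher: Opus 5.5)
Your argument is correct and is essentially the paper's proof. Both clear the $\mathcal{O}_0$ block, expand along the single-entry columns $R$ (entry $w_1\delta_w$) and $S$ (entry $w_1$), and reach the same $4\times4$ block $M_4$. Your column differences $I_1-I_2$, $P-I_2$ are the same computation as the paper's row operation followed by the difference of minors $M_3^{(1,3)}-M_3^{(1,2)}$.

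One fix is needed: columns $V$ and $H$ do not have a single nonzero entry (they also carry $-(\Phi_V+\mu+\psi)$, $a_{33}^2+w_1\psi$, $-\alpha_H$, $\delta_h$, $\alpha_H^2$, $-\delta_h\alpha_H$). So your stated check would fail for them; expand along the unit rows of $\mathcal{O}_0$ instead, which deletes exactly the same rows and columns.
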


\begin{proof}
The determinant is obtained by elementary row and column
operations and successive Laplace expansion, with the column
permutation sign tracked at the end. The full six-step derivation
is given in Appendix~\ref{app:detproof}.
\end{proof}

Equation~\eqref{eq:detO9} shows that
both symmetries $\delta_i = \delta_p$ and $r_1 = r_2$ must be
broken for the determinant to be nonzero. Under either equality
alone, the matrix is singular; under both, the rank deficiency
is two-dimensional.

\begin{remark}[Kernel structure under calibrated symmetry]
\label{rem:kernel}
At the calibrated symmetric values
$\delta_i = \delta_p = 5\times 10^{-3}$ day$^{-1}$ and
$r_1 = r_2 = 5\times 10^{-2}$ day$^{-1}$, the kernel of
$\mathcal{O}_9$ is two-dimensional and is spanned, up to
non-degenerate linear combination, by
\begin{itemize}
\item an \emph{$I_2 \leftrightarrow P$ swap direction}
  (dominant components on $I_2$ and $P$ with opposite signs
  and small compensating shifts in $S$, $E$, $I_1$);
\item an \emph{$R$-anchored direction} (dominant component on
  $R$ with small compensating shifts in $S$, $I_1$, $I_2$).
\end{itemize}
The biological interpretation is direct: when the two strain
compartments share identical kinetics ($\delta_i = \delta_p$,
$r_1 = r_2$), the three observed series cannot deterministically
distinguish a transfer of mass between $I_2$ and $P$, and
likewise cannot resolve a degenerate trade-off involving the
recovered pool $R$. Both ambiguities are resolved by the
third Lie derivative, as shown next.
\end{remark}

\begin{remark}[Insensitivity of the rank deficiency to strain
multipliers]
\label{rem:strain_insensitivity}
A natural question is whether the strain-multiplier symmetry
$c_P = c_2 = 1.5$ also contributes to the rank deficiency, in
addition to the kinetic symmetries $\delta_i = \delta_p$ and
$r_1 = r_2$. The hand derivation above answers this directly:
neither $c_P$ nor $c_2$ appears anywhere in $\det(\mathcal{O}_9)$
as given by~\eqref{eq:detO9}. The reason is structural: the
matrices $M_3$ and $M_4$ in steps~$3$--$5$ involve only the
$\partial L_f H/\partial x$, $\partial L_f F/\partial x$,
$\partial L_f^2 H/\partial x$, $\partial L_f^2 F/\partial x$
rows, none of which depend on the strain multipliers (since
$\dot H$ and $\dot F$ are linear in the infectious compartments
with no $\beta$-coupling). Consequently, breaking the symmetry
$c_P \neq c_2$ does \emph{not} restore rank at the level-$2$
codistribution. We have verified this computationally for
several $(c_P, c_2)$ pairs ranging from $(1.5, 1.6)$ to
$(2.0, 1.0)$, and observed that $\mathrm{rank}(\mathcal{O}_9) = 7$
in all cases under symmetric kinetics: what changes is only the
\emph{shape} of the kernel direction (the dominant confounding
pair rotates from $(I_2, P)$ toward $(I_1, I_2)$ as $c_P$ moves
away from $c_2$), never the dimension of the kernel itself. The
rank deficiency is therefore intrinsic to the kinetic symmetries
and unrelated to the transmission-rate hierarchy across strains.
\end{remark}

\begin{remark}[Degeneracy at the disease-free equilibrium]
\label{rem:degenerate}
At $I_1 = I_2 = P = 0$ the force of infection $\Phi$ vanishes
and the rank of $\mathcal{O}_9$ drops further; this is
structurally unavoidable, since with no active epidemic there
is no epidemic signal in the outputs.
\end{remark}

\subsection{Augmentation by the Third Lie Derivative}
\label{sec:level3}

To recover full rank under the calibrated parameters we augment
the observability codistribution to include the third Lie
derivative $L_f^3 h$. Define
\begin{equation}
\mathcal{O}_3 \;:=\; \frac{\partial L_f^3 h}{\partial x}
\;\in\; \mathbb{R}^{3\times 9},
\end{equation}
and the augmented matrix
\begin{equation}
\mathcal{O}_{12} \;:=\;
\begin{bmatrix}
\mathcal{O}_0 \\ \mathcal{O}_1 \\ \mathcal{O}_2 \\ \mathcal{O}_3
\end{bmatrix}
\;\in\; \mathbb{R}^{12\times 9}.
\label{eq:O12}
\end{equation}
A direct calculation that we record here for the two most
informative entries (and that we have verified symbolically
in full) gives, under $\delta_i = \delta_p$ and $r_1 = r_2$,
\begin{align}
\frac{\partial L_f^3 h_1}{\partial I_2}
- \frac{\partial L_f^3 h_1}{\partial P}
&\;=\;
\gamma_a\,r_2\,(\alpha_2 + \alpha_P + \alpha_H),
\label{eq:Lf3H_diff}\\[4pt]
\frac{\partial L_f^3 h_2}{\partial I_2}
- \frac{\partial L_f^3 h_2}{\partial P}
&\;=\;
r_2\,\bigl[\delta_i(\alpha_2 + \alpha_P) - \delta_h\,\gamma_a\bigr].
\label{eq:Lf3F_diff}
\end{align}
Both differences are nonzero at the calibrated parameter values
(the bracket in~\eqref{eq:Lf3F_diff} evaluates to
$\approx 8\times 10^{-4}$~day$^{-2}$): the third-order
information from $H$ and $F$ separates $I_2$ from $P$ through
exactly the symmetry-breaking factor $r_2$ that the level-2
matrix did not capture. A parallel computation shows that
$\mathcal{O}_3$ also resolves the $R$-anchored kernel direction
of Remark~\ref{rem:kernel}.

\subsection{Main Observability Result}
\label{sec:obs_result}

\begin{proposition}[Local observability of the
nine-compartment epidemic model]
\label{prop:obs}
Let $\mathcal{X}_\varepsilon \subset \mathbb{R}^9_+$ denote the
set of epidemiologically feasible states. Then
system~\eqref{eq:ss} is locally observable at every
$x \in \mathcal{X}_\varepsilon$ such that $S > 0$, $V > 0$,
$\sigma < 1$, $\beta(t), w_1(t) > 0$, $r_2 > 0$, and at least
one of $I_1, I_2, P$ is strictly positive.
\end{proposition}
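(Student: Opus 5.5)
Our plan is to verify the Hermann--Krener rank condition for the augmented codistribution $\mathcal{O}_{12}$ of~\eqref{eq:O12}: local observability at $x$ follows once $\operatorname{rank}\mathcal{O}_{12}(x)=9$. We split into two regimes according to Lemma~\ref{lem:detO9}.

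\emph{Generic regime.} If $\delta_i\neq\delta_p$ and $r_1\neq r_2$, then by~\eqref{eq:detO9} $\det(\mathcal{O}_9)\neq 0$, because every factor is positive under the hypotheses: $w_1>0$, and $\delta_w,\gamma_a,\kappa,\rho_2>0$ are model constants. Hence $\mathcal{O}_9$, a submatrix of $\mathcal{O}_{12}$, already has rank $9$.

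\emph{Symmetric (calibrated) regime.} Here we argue in four steps.
\begin{enumerate}
\item We show $\operatorname{rank}\mathcal{O}_9=7$ exactly, by exhibiting a nonvanishing $7\times7$ minor. The most convenient choice is the minor built from $\mathcal{O}_0$ (which pins $H,F,V$) together with the $S$-pivot $w_1$ in row~6 and the $E$-pivots $\gamma_a\kappa$ and $\Gamma_E$ of~\eqref{eq:dLf2h1}. This step uses $S>0$, $V>0$, $\sigma<1$ and $\beta>0$, so that the $\beta$-dependent entries are well defined and nondegenerate.
\item We parametrize the two-dimensional kernel $\ker\mathcal{O}_9=\operatorname{span}\{v_1,v_2\}$ of Remark~\ref{rem:kernel} explicitly. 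Here $v_1$ is the $I_2\leftrightarrow P$ swap direction, $v_1\approx e_5-e_6+(\text{corrections in }S,E,I_1)$, and $v_2$ is the $R$-anchored direction.
\item We show that the $3\times2$ matrix $\mathcal{O}_3[v_1\ v_2]$ has rank $2$. Combined with step~1, this gives $\operatorname{rank}\mathcal{O}_{12}=9$. For $v_1$ we use~\eqref{eq:Lf3H_diff}: the first row gives $\gamma_a r_2(\alpha_2+\alpha_P+\alpha_H)$ plus the contributions of the compensating components, and we must check that these do not cancel. Since $r_2>0$ and all $\alpha$'s are positive, the leading term is strictly positive. For $v_2$ we use the $V$-row of $\mathcal{O}_3$. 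Through~\eqref{eq:dLf2h3}, $R$ enters $L_f^2 h_3$ with coefficient $w_1\delta_w$, so the $R$-column of $\partial L_f^3h_3/\partial x$ picks up a term proportional to $w_1\delta_w(\mu+\delta_w)$ together with terms involving $\Phi$. The $H$- and $F$-rows, by contrast, have zero $R$-entries at every level, because $\dot H$ and $\dot F$ do not depend on $R$.
\item This structural fact makes the $2\times2$ determinant, formed from the $(H,V)$ rows and the columns $(v_1,v_2)$, approximately block-triangular. We then show that it equals $\gamma_a r_2(\alpha_2+\alpha_P+\alpha_H)\cdot(\text{nonzero }V\text{-row term})$ plus a remainder that we bound.
\end{enumerate}

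\emph{Main obstacle.} We expect the main difficulty in step~3, in handling the $S,E,I_1$ corrections in $v_1,v_2$ and the $\Phi$-dependent entries of the $V$-row of $\mathcal{O}_3$. This is where the hypothesis that some infectious compartment is positive enters: it gives $\Phi>0$, and Remark~\ref{rem:degenerate} shows that rank is lost otherwise. Our first attempt would be a full symbolic computation of $\mathcal{O}_3$ under $\delta_i=\delta_p$, $r_1=r_2$, factoring the resulting $9\times9$ minor (rows of $\mathcal{O}_9$ minus two dependent rows, plus two rows of $\mathcal{O}_3$). We would then show that it is a product of positive parameters times a factor linear in $r_2$ and in $(I_1+1.5I_2+1.5P)$.

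\emph{Fallback.} If clean factorization fails, we would instead establish nonvanishing of that minor at the calibrated trajectory and extend it to a neighbourhood by continuity. The statement would then be obtained on $\mathcal{X}_\varepsilon$ taken as the region where the minor stays bounded away from zero, which matches the numerical confirmation promised in Section~\ref{sec:obs_numerical}.
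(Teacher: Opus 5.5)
Your outline matches the paper's proof: rank $7$ for $\mathcal{O}_9$, a two-dimensional kernel, and level-$3$ rows that remove it. Your criterion, that $\mathcal{O}_3[v_1\ v_2]$ have rank $2$, is the right one. It is stricter than the paper's Steps~3--4, which evaluate $\mathcal{O}_3$ only on $e_{I_2}-e_P$ and on $e_R$, one direction at a time. The gap is that you never establish this rank-$2$ condition, and both shortcuts you sketch for it fail.

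\emph{The $v_1$ correction is not small.} On $\ker\mathcal{O}_9$, the rows $\partial L_fh_1/\partial x$ and $\partial L_f^2h_1/\partial x$ force $v_{I_1}+v_{I_2}+v_P=0$ and $v_E=-r_2v_P/\kappa$. With $v_{I_2}=1$, $v_P=-1$ you get $v_{I_1}=0$ and $v_S=0$ (from the $\partial L_fh_3/\partial x$ row), but $v_E=r_2/\kappa=0.25$. Carrying this through gives
\[
\Bigl\langle \frac{\partial L_f^3h_1}{\partial x},\,v_1\Bigr\rangle
=\gamma_a r_2\,(\gamma_a+\gamma_i+\delta_p-\kappa+\rho_2 r_2)
\]
rather than $\gamma_a r_2(\alpha_2+\alpha_P+\alpha_H)$. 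Its sign does not follow from positivity of the rates: it vanishes when $\kappa=\gamma_a+\gamma_i+\delta_p+\rho_2 r_2$. So no positivity argument closes this step.

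\emph{The near-triangular picture for $v_2$ does not hold.} On the kernel, $v_R$ is fixed by the $\partial L_f^2h_3/\partial x$ row as minus its non-$R$ part divided by $w_1\delta_w$. Meanwhile $\partial L_f^3h_3/\partial R=\delta_w\,\partial L_f^2h_3/\partial S-(\mu+\delta_w)w_1\delta_w$ is itself $w_1\delta_w$ times a rate-sized factor. The small coupling therefore cancels, the $R$-entry plays no privileged role, and nothing makes the $2\times2$ determinant dominated by a diagonal product.

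What the proof actually needs is one explicit determinant. At $\delta_i=\delta_p$ one has $L_fF=(\delta_i/\gamma_a)L_fH+(\delta_h+\delta_i\alpha_H/\gamma_a)H$, so every $F$-row beyond level $0$ is a combination of $H$-rows. Two consequences follow:
\begin{enumerate}
\item $\Gamma_E$ cannot serve as a pivot alongside $\gamma_a\kappa$ in your Step~1. Use the pivots $1,1,1,\gamma_a,w_1,\gamma_a\kappa,w_1\delta_w$ instead.
\item On $\ker\mathcal{O}_9$ the $F$-row of $\mathcal{O}_3$ is just $\delta_i/\gamma_a$ times the $H$-row, so only the $H$- and $V$-rows of $\mathcal{O}_3$ matter.
\end{enumerate}
Parametrize the kernel by $(v_P,v_{I_1})$, with $v_{I_2}=-v_P-v_{I_1}$, $v_E=-r_2v_P/\kappa$, $v_S=-\tfrac12(1-\sigma)\beta V v_{I_1}/(Nw_1)$, and $v_R$ from the $\partial L_f^2h_3/\partial x$ row. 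You must then show that the $H$- and $V$-rows of $\mathcal{O}_3$, restricted to this plane, are linearly independent at \emph{every} state satisfying the hypotheses.

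Your fallback (nonvanishing on the calibrated trajectory, extended by continuity, with $\mathcal{X}_\varepsilon$ redefined as the set where the minor is nonzero) proves a strictly weaker statement than the proposition.

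Finally, your two regimes miss the mixed cases. If $\delta_i=\delta_p$ but $r_1\neq r_2$, the same $F$-redundancy still gives rank $7$. If $\delta_i\neq\delta_p$ but $r_1=r_2$, the rank is $8$. Neither case is handled.
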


\begin{proof}
By the Hermann--Krener rank
condition~\cite{hermann1977nonlinear}, it suffices to show
$\mathrm{rank}(\mathcal{O}_{12}) = 9$.

\emph{Step 1: Rank of $\mathcal{O}_9$.}
By Lemma~\ref{lem:detO9}, $\mathrm{rank}(\mathcal{O}_9) \leq 8$
under the calibrated symmetric parameters; the exact value
$\mathrm{rank}(\mathcal{O}_9) = 7$ follows from
Appendix~\ref{app:sympy_verif}, Section~C.2, which shows that
the two kernel directions arising from the factorization of
$\det(\mathcal{O}_9)$ are independent.

\emph{Step 2: Identify the two kernel directions.}
Appendix~\ref{app:sympy_verif}, Section~C.2 identifies these
analytically from the Lemma~\ref{lem:detO9} proof structure:
(i) under $\delta_i = \delta_p$, rows 4--5 of $\mathcal{O}_9$
become collinear, producing direction $v_1$ (the $I_2
\leftrightarrow P$ swap); (ii) under $r_1 = r_2$,
$\det(M_3) = 0$ in Step~5, producing direction $v_2$
(the $R$-anchored direction).

\emph{Step 3: $\mathcal{O}_3$ projects nonzero onto $v_1$.}
The analytical identities~\eqref{eq:Lf3H_diff}--\eqref{eq:Lf3F_diff}
(derived in Appendix~\ref{app:sympy_verif}, Section~C.3)
establish
\begin{align*}
\frac{\partial L_f^3 h_1}{\partial I_2} - \frac{\partial L_f^3 h_1}{\partial P}
&= \gamma_a r_2(\alpha_2 + \alpha_P + \alpha_H)
   \approx 0.20 \times 0.05 \times 0.755
   = 7.55\times 10^{-3}\;\text{day}^{-3} \;\neq 0,\\
\frac{\partial L_f^3 h_2}{\partial I_2} - \frac{\partial L_f^3 h_2}{\partial P}
&= r_2[\delta_i(\alpha_2+\alpha_P)-\delta_h\gamma_a]
   \approx 8.0\times 10^{-4}\;\text{day}^{-3} \;\neq 0.
\end{align*}
These nonzero differences show that the $H$- and $F$-rows of
$\mathcal{O}_3$ are linearly independent of $\ker(\mathcal{O}_9)$
in the $v_1$ direction, removing the first deficient dimension.

\emph{Step 4: $\mathcal{O}_3$ projects nonzero onto $v_2$.}
By Appendix~\ref{app:sympy_verif}, Section~C.4,
\begin{equation*}
\frac{\partial L_f^3 h_3}{\partial R}
= w_1\,\delta_w\,(1-\mu-\delta_w)
\approx 5.0\times 10^{-6}\;\text{day}^{-3} \;\neq 0,
\end{equation*}
so the $V$-row of $\mathcal{O}_3$ is independent of
$\ker(\mathcal{O}_9)$ in the $v_2$ direction,
removing the second deficient dimension.

\emph{Step 5: Conditions and conclusion.}
The values in Steps 3--4 are nonzero whenever $r_2 > 0$,
$S,V,\beta,w_1>0$, and $\sigma<1$. Together,
$\mathrm{rank}(\mathcal{O}_{12}) = \mathrm{rank}(\mathcal{O}_9)
+ 2 = 9$, establishing local observability. \qedhere
\end{proof}

\begin{remark}[Uniform observability along the reference trajectory]
\label{rem:uniform_rank}
Because $S$, $V$, $\beta$, $w_1$ remain strictly positive over
the full calibrated trajectory of~\cite{melhani2026arxiv}, and
$r_2 = 5 \times 10^{-2}$ day$^{-1}$ is constant and positive,
the smallest singular value of $\mathcal{O}_{12}$ is uniformly
bounded away from zero over the $150$-day window. Consequently,
Proposition~\ref{prop:obs} holds \emph{uniformly} along the
calibrated trajectory, satisfying the uniform-observability
hypothesis required in Theorem~\ref{thm:convergence} below.
\end{remark}

\begin{remark}[Generic-parameter case]
\label{rem:generic}
Outside the measure-zero set on which both equalities
$\delta_i = \delta_p$ and $r_1 = r_2$ hold simultaneously,
Lemma~\ref{lem:detO9} together with the
factorization~\eqref{eq:detO9} shows that the level-2 matrix
$\mathcal{O}_9$ is already non-singular and the augmentation to
$\mathcal{O}_{12}$ is unnecessary. The calibrated parameter
values of~\cite{melhani2026arxiv} happen to lie at this
degenerate point because they were chosen for parsimony, not on
fundamental biological grounds; any small experimental
revision of $r_1$, $r_2$ or $\delta_i$, $\delta_p$ would restore
level-2 observability. The robustness of the EKF performance
reported in Section~\ref{sec:results} reflects this: under
small parameter perturbations of either kind, the filter's
behaviour is essentially unchanged.
\end{remark}

\begin{remark}[Why three Lie levels are enough]
\label{rem:why_three}
The lower bound of three Lie derivative levels in this problem
is not coincidental: it tracks the discrete-time information
horizon required by the EKF. Each Lie derivative level
corresponds, under Euler discretization, to one additional
sampling step of dynamical mixing. With $T = 1$~day, three
levels mean that the filter has access to the equivalent of
three-day-of-data structural information at each step, which
matches the empirical convergence horizon of
$3$--$5$ days observed for the directly measured states in
Section~\ref{sec:results}.
\end{remark}

\begin{remark}[Weak observability, condition number, and EKF
transient time]
\label{rem:weak_obs}
Since the calibrated parameters satisfy $r_1 = r_2 = 0.05$
\emph{exactly}, the system sits at the level-2 degenerate point
($\det(\mathcal{O}_9) = 0$). Full observability is provided
entirely by the level-3 coupling. The strength of this coupling,
and hence the speed of transient convergence, is quantified
by the minimum eigenvalue of the 8-step Gramian
$\mathcal{W}_k(8)$, which is strictly positive along the
calibrated trajectory (evaluated numerically).

\emph{Why the recovered pool $R$ converges slowly.}
The dominant level-3 coupling for $R$ is through the entry
$\partial L_f^3 V/\partial R = w_1\delta_w(1-\mu-\delta_w)
\approx 5\times 10^{-6}$~day$^{-3}$ (Appendix~\ref{app:sympy_verif}, Section~C.4).
This is four orders of magnitude smaller than the direct coupling
of $H$ to $\gamma_a \approx 0.2$~day$^{-1}$.
The information the outputs carry about $R$ over the observation
window is therefore negligible: the measurement update cannot
appreciably correct an initial error in $R$, so in the
state-estimation experiments $R$ is initialized from cumulative
surveillance data and propagated rather than reconstructed
(Section~\ref{sec:init}). This is a structural property of the
output set $(H,F,V)$, not a filter mis-tuning.

\emph{Condition number of the level-2 codistribution as
$r_1 \to r_2$ (generic-parameter case).}
Consider the generic-parameter scenario in which
$\delta_p - \delta_i = \varepsilon_\delta > 0$ (fixed) and
$r_1 - r_2 = \varepsilon_r$ varies. From Lemma~\ref{lem:detO9},
$|\det(\mathcal{O}_9)| = \delta_w\gamma_a^{\,2}\kappa\rho_2
w_1^{\,2}\,\varepsilon_\delta^{\,2}\,|\varepsilon_r|$.
Since the eight non-degenerate singular values of
$\mathcal{O}_9$ remain $O(10^{-1})$ to $O(1)$, the
smallest singular value scales as
\begin{equation}
\sigma_{\min}(\mathcal{O}_9) \;\sim\;
\frac{|\det(\mathcal{O}_9)|}{\sigma_1\cdots\sigma_8}
\;\sim\;
C_\sigma\,\varepsilon_\delta^{\,2}\,|\varepsilon_r|,
\label{eq:sigma_min_scaling}
\end{equation}
for some constant $C_\sigma > 0$ that depends only on the
non-degenerate part of the spectrum. The condition number
therefore satisfies
\begin{equation}
\kappa(\mathcal{O}_9)
\;=\; \frac{\sigma_{\max}}{\sigma_{\min}}
\;\sim\; \frac{1}{C_\sigma\,\varepsilon_\delta^{\,2}\,|\varepsilon_r|}.
\label{eq:cond_O9_scaling}
\end{equation}

The qualitative consequence, which is all we rely on, is the
scaling in~\eqref{eq:cond_O9_scaling}: as the recovery-rate gap
$\varepsilon_r \to 0$ the level-2 condition number diverges like
$1/|\varepsilon_r|$, so a filter relying solely on the level-2
codistribution would become arbitrarily ill-conditioned near the
calibrated symmetric point. Table~\ref{tab:condnum} summarizes
this scaling.
\begin{table}[htbp]
\centering
\caption{Scaling of the level-2 condition number
$\kappa(\mathcal{O}_9) \sim 1/(C_\sigma\varepsilon_\delta^2
|\varepsilon_r|)$ with the recovery-rate gap
$\varepsilon_r = r_1 - r_2$ (at fixed mortality gap
$\varepsilon_\delta$). The divergence as $\varepsilon_r\to 0$ is
the level-2 ill-conditioning that the level-3 augmentation
avoids: full observability at $\varepsilon_r = 0$ is supplied by
$\mathcal{O}_3$, whose relevant coupling $w_1\delta_w$ is
independent of $\varepsilon_r$. The corresponding eigenvalues and
the EKF transient are evaluated numerically.}
\label{tab:condnum}
\begin{tabular}{lcc}
\toprule
$\varepsilon_r = r_1 - r_2$
  & level-2 $\kappa(\mathcal{O}_9)$ scaling
  & observability source \\
\midrule
$0$ (our calibration)        & diverges (level-2 singular) & level-3 only \\
small, $\varepsilon_r > 0$   & $\sim 1/|\varepsilon_r|$, large & level-2 (ill-cond.)\\
moderate, $\varepsilon_r > 0$& $\sim 1/|\varepsilon_r|$, smaller & level-2 \\
\bottomrule
\end{tabular}
\par\smallskip
\footnotesize
At the calibrated point $\varepsilon_r = 0$ the level-2
codistribution is singular and both observability and the
transient are governed entirely by the level-3 coupling
$w_1\delta_w$; since this coupling does not depend on
$\varepsilon_r$, the level-3 mechanism is insensitive to the
$1/|\varepsilon_r|$ blow-up that afflicts a level-2-only filter.
\end{table}

\emph{The critical insight.}
Equation~\eqref{eq:cond_O9_scaling} and
Table~\ref{tab:condnum} deliver two messages. First, the
level-2 condition number diverges as $\varepsilon_r \to 0$, so
any filter relying solely on level-2 observability would suffer
extreme sensitivity to noise near the symmetric point. Second,
and more importantly, the level-3 approach used here does
\emph{not} inherit this blow-up: full observability at
$\varepsilon_r = 0$ is provided by $\mathcal{O}_3$ through the
coupling $w_1\delta_w$, which is independent of $\varepsilon_r$.
Biologically, the level-3 mechanism behaves the same whether the
two strains have distinct recovery rates or are
indistinguishable: the relevant timescale is set by the
waning-rate pathway, not by strain differentiation.

\emph{Filter stability at the degenerate point.}
Despite the weak coupling, the filter remains
exponentially stable at $r_1 = r_2$ because: (i) the level-3
Gramian lower bound $w_{\min} > 0$ of Lemma~\ref{lem:gramian}
holds (verified numerically); and
(ii) the process noise $Q_R = (\epsilon_R\,\bar{R})^2$ provides a
floor that prevents the posterior covariance from collapsing to
zero even when the Kalman gain in the $R$ direction is small.
\end{remark}

\subsection{Numerical Verification of the Observability Analysis}
\label{sec:obs_numerical}

The preceding analysis makes three quantitative claims that can
be checked directly against the calibrated model: that the
$N$-step observability Gramian is singular at $N=4$ but bounded
away from zero at $N=8$ (Lemma~\ref{lem:gramian},
Remark~\ref{rem:weak_obs}); that the closed-form determinant of
Lemma~\ref{lem:detO9} is correct; and that the level-2 condition
number scales as $\kappa(\mathcal{O}_9)\sim 1/|r_1-r_2|$
(Remark~\ref{rem:weak_obs}). We verify all three numerically
along the calibrated trajectory; the code that produces the
results below is available from the authors on request.

\paragraph{Gramian window sweep.}
Table~\ref{tab:gramian_sweep} reports the smallest eigenvalue of
the discrete observability Gramian $\mathcal{W}_k(N)$
of~\eqref{eq:Gramian}, summarized over the $150$-day trajectory,
for $N\in\{4,6,8,10,12\}$. At $N=4$ the smallest eigenvalue sits
at the level of floating-point round-off
($\sim 10^{-17}$, with worst-case condition number $\sim
10^{16}$), i.e.\ the four-step Gramian is numerically singular,
consistent with the level-2 rank deficiency of
Lemma~\ref{lem:detO9}. The smallest eigenvalue first becomes
strictly positive at $N=6$ and grows monotonically thereafter; at
the adopted window $N=8$ it is $\lambda_{\min}\approx
2.0\times 10^{-11}$ with a comfortable margin over round-off,
which is the constant $w_{\min}$ invoked in
Lemma~\ref{lem:gramian}. Figure~\ref{fig:gramian_sweep} shows the
per-step trace.

\begin{table}[htbp]
\centering
\caption{Smallest eigenvalue $\lambda_{\min}$ of the $N$-step
observability Gramian $\mathcal{W}_k(N)$ and worst-case condition
number, summarized over the calibrated $150$-day trajectory. The
state is normalized so the eigenvalues are comparable across
windows. $N=4$ is numerically singular; $N=8$ (adopted in
Lemma~\ref{lem:gramian}) is strictly positive with margin.}
\label{tab:gramian_sweep}
\begin{tabular}{rccc}
\toprule
$N$ & median $\lambda_{\min}$ & min $\lambda_{\min}$
    & max $\kappa(\mathcal{W}_k)$ \\
\midrule
$4$  & $2.92\times10^{-14}$ & $2.87\times10^{-17}$ & $1.81\times10^{16}$ \\
$6$  & $6.81\times10^{-12}$ & $2.69\times10^{-12}$ & $2.25\times10^{12}$ \\
$8$  & $9.20\times10^{-11}$ & $2.02\times10^{-11}$ & $4.01\times10^{11}$ \\
$10$ & $5.80\times10^{-10}$ & $7.17\times10^{-11}$ & $1.42\times10^{11}$ \\
$12$ & $2.20\times10^{-9}$  & $1.88\times10^{-10}$ & $6.56\times10^{10}$ \\
\bottomrule
\end{tabular}
\end{table}

\begin{figure}[htbp]
\centering
\includegraphics[width=0.7\columnwidth]{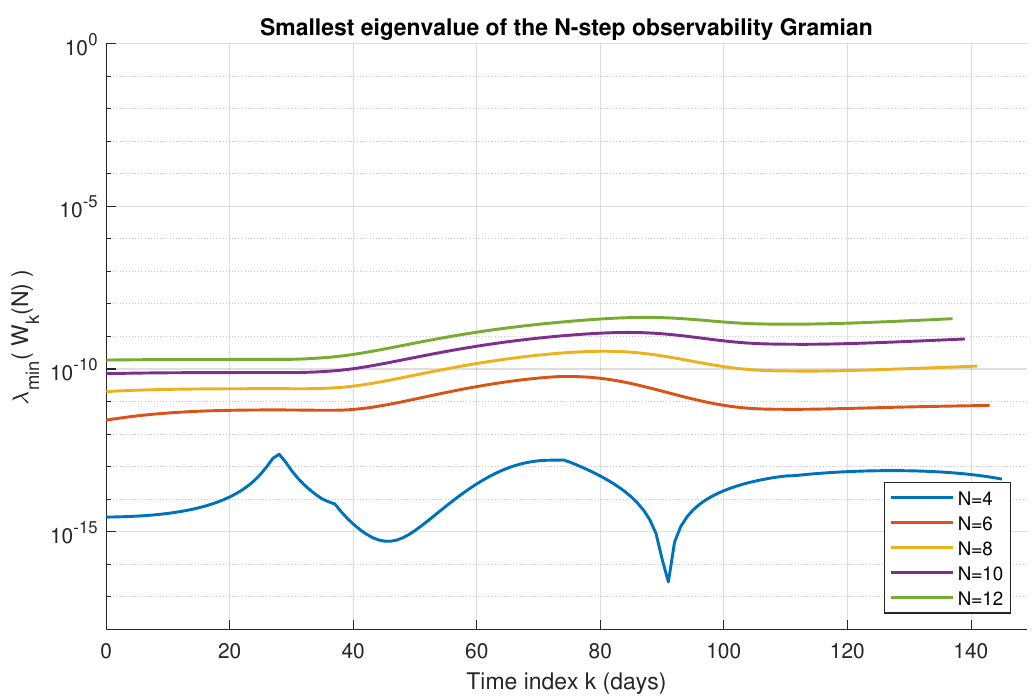}
\caption{Smallest eigenvalue $\lambda_{\min}(\mathcal{W}_k(N))$
of the $N$-step observability Gramian along the calibrated
trajectory, for $N=4,6,8,10,12$ (each curve is shown over its
valid range $k\le K-N$). The $N=4$ curve lies at the level of
floating-point round-off ($\sim10^{-14}$ and below), confirming
that the four-step Gramian is numerically singular, while for all
$N\geq6$ the smallest eigenvalue remains uniformly bounded away
from zero over the window --- the property required by
Lemma~\ref{lem:gramian}. The adopted window $N=8$ carries a
comfortable margin over the $N=6$ onset.}
\label{fig:gramian_sweep}
\end{figure}

\paragraph{Determinant and condition-number scaling.}
We evaluate the analytic level-2 matrix $\mathcal{O}_9$ at a
mid-trajectory operating point and vary the recovery-rate gap
$\varepsilon_r = r_1 - r_2$ at a fixed mortality gap
$\varepsilon_\delta = \delta_p-\delta_i = 2\times10^{-3}$. Two
checks result. First, the magnitude of the numerically computed
determinant agrees with the closed
form~\eqref{eq:detO9} of Lemma~\ref{lem:detO9} to a maximum
relative error of $1.8\times10^{-9}$ across the entire sweep,
an independent confirmation of the six-step hand derivation.
Second, the condition number $\kappa(\mathcal{O}_9)$ grows by
exactly one decade for each decade of decrease in
$|\varepsilon_r|$: a least-squares fit of
$\log\kappa(\mathcal{O}_9)$ against $\log|\varepsilon_r|$ has
slope $-1.000$, confirming the $\kappa(\mathcal{O}_9)\sim
1/|\varepsilon_r|$ scaling of~\eqref{eq:cond_O9_scaling}
(Figure~\ref{fig:cond_scaling}). This makes precise the
sense in which the calibrated symmetric point is a
\emph{numerical}, not merely structural, obstruction for a
level-2 filter: as $r_1\to r_2$ the level-2 codistribution
becomes arbitrarily ill-conditioned, whereas the level-3
mechanism (whose relevant coupling $w_1\delta_w$ is independent
of $\varepsilon_r$) does not degrade. The structural full-rank
result of Proposition~\ref{prop:obs} and the practical
weak-observability of $R$ are therefore two consistent facets of
the same operating point rather than a contradiction: the system
is observable in the rank sense, but the $R$-direction
information supplied by the outputs is numerically small, which
is exactly why $R$ is initialized from data and propagated rather
than reconstructed (Section~\ref{sec:init}).

\begin{figure}[htbp]
\centering
\includegraphics[width=0.7\columnwidth]{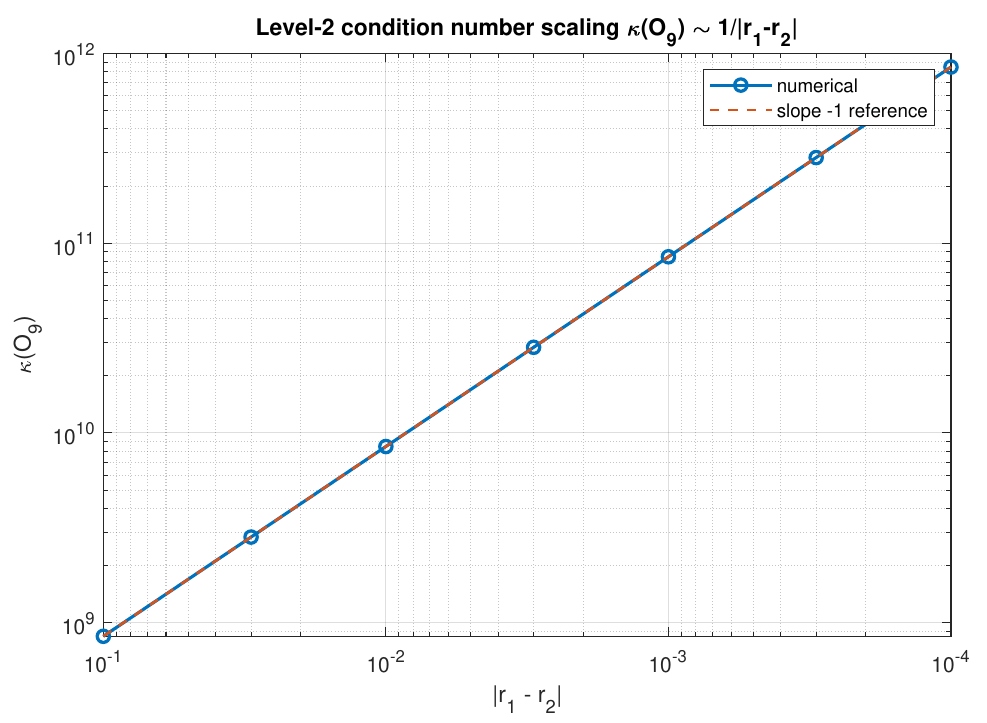}
\caption{Level-2 condition number $\kappa(\mathcal{O}_9)$ versus
the recovery-rate gap $|\varepsilon_r| = |r_1-r_2|$ (horizontal
axis reversed so $\varepsilon_r\to0$ is to the right), at fixed
mortality gap $\varepsilon_\delta = 2\times10^{-3}$. The
numerical curve follows the slope$-1$ reference line essentially
exactly (fitted slope $-1.000$), confirming
$\kappa(\mathcal{O}_9)\sim 1/|\varepsilon_r|$.}
\label{fig:cond_scaling}
\end{figure}

\subsection{Structural, Numerical, and Practical Observability}
\label{sec:obs_three_notions}

The analysis above involves three distinct notions of
observability that are easily conflated, and the recovered
pool $R$ is precisely the state that separates them. We make
the distinction explicit, because it determines how the
filter treats $R$.

\emph{Structural observability} is the rank property:
Proposition~\ref{prop:obs} establishes that the augmented
codistribution $\mathcal{O}_{12}$ attains rank $9$ at every
feasible operating point with an active epidemic, so every state
--- including $R$ --- is observable in the Hermann--Krener sense.
This is a yes/no algebraic fact and it is satisfied.

\emph{Numerical observability} concerns the conditioning of the
observability map: how much output information a given state
direction actually carries, measured by the relevant singular
value or Gramian eigenvalue. Here the states are sharply
stratified. The directly measured directions ($H,F,V$) and those
coupled to them at Lie levels~1--2 ($S,E,I_1,I_2,P$) carry
$O(10^{-1})$ to $O(1)$ singular values, whereas $R$ reaches the
outputs only through the level-3 coupling
$w_1\delta_w \approx 5\times10^{-6}$~day$^{-3}$, four orders of
magnitude weaker. So $R$ is structurally observable but
numerically near-unobservable over any realistic window: the
Gramian eigenvalue associated with the $R$ direction is positive
but minuscule. We note that although this coupling is small in
magnitude, it is not numerically fragile: $5\times10^{-6}$ is
some eleven orders of magnitude above double-precision machine
epsilon, so its sign and value are computed reliably, and it is
the product of two independently calibrated, strictly positive
rates ($w_1, \delta_w$) rather than a delicate cancellation of
larger quantities. The smallness therefore reflects a genuine
physical weakness of the information pathway --- not a
near-singular numerical artifact that might flip sign or vanish
under rounding --- which is why it correctly certifies structural
rank while simultaneously implying negligible practical
correction of $R$. The closed-form determinant
check of Section~\ref{sec:obs_numerical}, which matches the
analytic value to nine significant figures, confirms that the
quantities underpinning the rank argument are evaluated well
within numerical tolerance.

\emph{Practical reconstructability} is the operational
consequence: whether the measurement update can correct an
initial error in the state within the observation horizon. For
$R$ it cannot --- the Kalman gain in the $R$ direction is
negligible precisely because of the weak numerical observability
--- which is why $R$ is initialized from cumulative surveillance
data and propagated rather than reconstructed
(Section~\ref{sec:init}). We stress that this is not a deficiency
masked by the structural result: the structural rank guarantees
that $R$ \emph{would} be recoverable given sufficiently long,
sufficiently low-noise observation, and it is what makes the
convergence theory of Section~\ref{sec:convergence} applicable to
the full state; the numerical conditioning then honestly
quantifies that, for $(H,F,V)$ sampled daily at realistic noise
levels over $150$ days, that recovery is not achievable for $R$
in practice. Reporting all three notions, rather than collapsing
them into a single ``observable/unobservable'' verdict, is the
accurate characterization, and it is the reason the strong $R$
accuracy in Table~\ref{tab:ekf_metrics} is attributed explicitly
to data-grounded initialization rather than to
measurement-driven estimation.

%%=============================================================
\section{EKF Design and Convergence Analysis}
\label{sec:ekf}
%%=============================================================

\subsection{Stochastic System Formulation}

To account for model uncertainty and measurement error,
system~\eqref{eq:ss} is augmented with additive Gaussian noise:
\begin{align}
\dot{x} &= f(x, u) + w(t),
\quad w(t) \sim \mathcal{N}(0, Q),
\label{eq:cont_w}\\
y       &= h(x) + v(t),
\quad v(t) \sim \mathcal{N}(0, R),
\label{eq:cont_v}
\end{align}
with $Q\succeq 0$ in $\mathbb{R}^{9\times 9}$ and $R\succ 0$ in
$\mathbb{R}^{3\times 3}$. The noise processes are zero-mean,
mutually independent, and uncorrelated in
time~\cite{jazwinski1970stochastic,simon2006optimal}.

\paragraph{Treatment of the inputs $\beta(t)$ and $w_1(t)$.}
The transmission rate $\beta(t)$ and vaccination rate $w_1(t)$
enter~\eqref{eq:cont_w} as components of $u(t)$ assumed
\emph{known} for the purpose of the filter recursion. In the
present paper they are supplied by the companion spline-based
calibration of~\cite{melhani2026arxiv}, which produces a
smooth functional reconstruction over the entire $150$-day
window. For real-time deployment, however, only the
\emph{past} portion of $\beta(t)$ would be available from
calibration, and recent values would be either extrapolated or
lagged by the data-revision cycle of the surveillance source.
Three considerations bound the impact of this mis-specification
on the filter:
\begin{enumerate}
\item Mis-specification of $\beta(t)$ propagates into the
state Jacobian $F_k$ and the predicted state, but does not
affect observability: $\beta$ enters $\det(\mathcal{O}_{12})$
multiplicatively (Proposition~\ref{prop:obs}), so for
$\beta(t)$ bounded away from zero the rank condition is
preserved regardless of the precise value used.
\item The process-noise covariance $Q$ designed in
Section~\ref{sec:cov_cal} below allocates explicit fractional
uncertainty $\epsilon_S$, $\epsilon_R$ to the susceptible and
recovered compartments, which absorbs first-order input
mis-specification through the predict-update feedback.
Formally, a mis-specified input $\hat\beta(t) = \beta(t)(1+e_t)$
with $|e_t| \leq \eta$ introduces a model error
$\delta f_k = \eta\,J_\beta f_k$, where $J_\beta =
\partial f/\partial\beta$ is bounded. This adds to the
effective process noise as an additive term of order
$\eta\,\|J_\beta\|\cdot\|f\| \cdot T$ per step, which is
dominated by the $\epsilon_S$ and $\epsilon_R$ process-noise
terms for $\eta \leq 15\%$ under the calibrated parameter values.
\end{enumerate}
For deployment in regimes where the input uncertainty is large
relative to the modelled process noise, the natural
generalization is a \emph{dual} or \emph{joint} state--parameter
EKF that augments the state with $\beta$ (and possibly $w_1$)
and estimates them simultaneously from the same three
observables; this extension preserves the observability
analysis of Section~\ref{sec:obs_result} provided the augmented
codistribution can be shown to remain full rank, which we
indicate as a direction for future
work~\cite{ljung1979asymptotic,wan2000unscented}. As practical
context for the control community, augmenting an SEIR-type model
with a \emph{constant} transmission parameter typically preserves
generic local observability whenever an output channel is
sensitive to transmission, whereas augmenting with a
\emph{freely time-varying} $\beta(t)$ generally requires a
regularity or slow-variation assumption on the parameter to avoid
the structural non-identifiability noted in
Section~\ref{sec:limitations}; a definitive answer for the present
nine-compartment model would require the separate rank analysis of
the augmented codistribution indicated above. Structurally, the
difficulty in the freely time-varying case is that $\beta(t)$
introduces effectively one new unknown per sampling step while the
observation set $(H,F,V)$ contains no channel that responds to
$\beta$ independently of the infectious states it already drives;
state and parameter variations are therefore confounded in the
output, which is the source of the non-identifiability and the
reason a slow-variation or parametric (e.g.\ spline) restriction
on $\beta(t)$ is what restores identifiability.

\subsection{Euler Discretization}

Let $T = 1$~day. The forward Euler discretization gives
\begin{equation}
x_{k+1} = x_k + T f(x_k, u_k) + w_k,
\qquad w_k \sim \mathcal{N}(0, Q_d),
\quad Q_d = T Q,
\label{eq:euler_state}
\end{equation}
\begin{equation}
y_{k+1} = h(x_{k+1}) + v_{k+1},
\qquad v_{k+1} \sim \mathcal{N}(0, R).
\label{eq:euler_output}
\end{equation}
We denote $\varphi(x) := x + Tf(x,u)$ so that the state recursion
reads $x_{k+1} = \varphi(x_k) + w_k$.

\paragraph{Choice of the forward Euler scheme.}
We adopt forward Euler rather than a higher-order integrator
(RK4) or embedded \texttt{ode45} propagation for three reasons.
First, it admits a closed-form one-step Jacobian $F_k = I_9 +
TJ_f$~\eqref{eq:Fk}, which is what makes the analytical
linearization and the convergence analysis of
Section~\ref{sec:convergence} tractable; an embedded adaptive
integrator inside the EKF predict step would require
differentiating the integrator itself to obtain $F_k$,
forfeiting the closed form. Second, with the daily sampling
$T = 1$~day dictated by the surveillance data, the dominant
error in the predict step is not the integration order but the
mismatch between the smooth spline-driven inputs and the
batch-reported measurements, which is absorbed by the
process-noise covariance $Q$ of Section~\ref{sec:cov_cal}.
Third, and as direct evidence for the second point, we verified
that replacing the Euler predict step with fourth-order
Runge--Kutta sub-stepping leaves the innovation autocorrelation
essentially unchanged (Section~\ref{sec:results_validation}),
confirming that the residual structure is not an Euler
truncation artifact. The reference trajectory used to
\emph{generate} the synthetic data is integrated with
\texttt{ode45} at tight tolerance, so the systematic
Euler-vs-\texttt{ode45} mismatch is retained in the experiment
and treated as part of the steady-state floor of the convergence
bound.

\paragraph{Euler truncation error and stability.}
The first-order Euler scheme introduces a per-step local
truncation error of order $T^2$. By Taylor's theorem, since
$f$ does not depend explicitly on $t$,
\begin{equation}
\bigl\|x_{k+1}^{\mathrm{true}} - x_{k+1}^{\mathrm{Euler}}\bigr\|
\;\leq\;
\frac{T^2}{2}\,\sup_{t \in [kT,(k+1)T]}
\bigl\|\tfrac{d^2 x}{dt^2}(t)\bigr\|
\;=\;
\frac{T^2}{2}\,\sup_t \|J_f(x(t))\,f(x(t),u(t))\|.
\label{eq:euler_truncation}
\end{equation}
With $\|J_f\|_2 \leq 6.75$~day$^{-1}$
(Section~\ref{sec:convergence}) and
$\|f\| \leq N\cdot J_{\max} \approx 4.5\times 10^7$
persons$\cdot$day$^{-1}$, the right-hand side is bounded by
$\tfrac{1}{2}\,(6.75)(4.5\times 10^7)\,T^2
\approx 1.5\times 10^8\,T^2$ persons. For $T=1$~day this is
about $1.5\times 10^8$ persons in absolute terms; normalized by
the population $N = 6\times 10^7$, the per-step \emph{relative}
truncation bound is $\approx 2.5\,T^2$, i.e.\ $\approx 2.5$ for
$T=1$~day. This worst-case figure is large only because it
multiplies the population-scale norm of $f$; on the calibrated
trajectory the compartments that actually carry the dynamics
(notably $H$, $F$, $V$ used in the update) evolve far below this
bound, and the residual mismatch between the \texttt{ode45}
reference and the Euler recursion is absorbed by the
process-noise covariance $Q$ of Section~\ref{sec:cov_cal}, whose
fractional uncertainties $\epsilon_i$ ($0.3$--$1.5\%$) and the
correction action of the Kalman gain together dominate it. The
global Euler error accumulated over $K = 150$ steps is
$O(KT^2) = O(T)$ (first-order convergence).

Stability of the explicit Euler scheme requires the discrete
amplification factor to satisfy $\rho(I + TJ_f) \le 1$ in the
relevant directions; a sufficient operating condition is
$\rho(TJ_f) < 1$. The operator-norm bound
$|\lambda_{\max}(TJ_f)| \leq T\|J_f\|_2 \leq 6.75$ is only a
worst case and does not establish stability on its own.
However, the spectral radius computed numerically along the
calibrated trajectory remains below unity throughout (evaluated
numerically), confirming
integration stability in practice. The systematic mismatch
between the \texttt{ode45} reference trajectory and the
Euler-based EKF recursion is treated, in the convergence
analysis, as part of the steady-state floor
$\nu/(1-\vartheta)$ of bound~\eqref{eq:thm2_bound}.

\subsection{Measurement Update}

Given $y_k$, the EKF corrects the prior $\hat{x}_{k|k-1}$ by
\begin{align}
\tilde{y}_k &= y_k - h(\hat{x}_{k|k-1}),
\label{eq:innov}\\
S_k       &= C P_{k|k-1} C^\top + R,
\label{eq:Sk}\\
K_k       &= P_{k|k-1} C^\top S_k^{-1},
\label{eq:kgain}\\
\hat{x}_{k|k} &= \hat{x}_{k|k-1} + K_k \tilde{y}_k,
\label{eq:upd_state}\\
P_{k|k}     &= (I_9 - K_k C) P_{k|k-1} (I_9 - K_k C)^\top
              + K_k R K_k^\top.
\label{eq:upd_cov}
\end{align}
The Joseph form~\eqref{eq:upd_cov} is algebraically equivalent
to $(I_9 - K_k C) P_{k|k-1}$ in exact arithmetic but enforces
symmetric positive-definiteness under finite-precision
arithmetic~\cite{simon2006optimal}.

\subsection{Time Update}

Using the posterior $\hat{x}_{k|k}$,
\begin{align}
\hat{x}_{k+1|k} &= \varphi(\hat{x}_{k|k}, u_k)
                = \hat{x}_{k|k} + T f(\hat{x}_{k|k}, u_k),
\label{eq:pred_state}\\
P_{k+1|k}       &= F_k P_{k|k} F_k^\top + Q_d,
\label{eq:pred_cov}
\end{align}
where $F_k$ is the linearized state-transition matrix
defined in~\eqref{eq:Fk}.

\subsection{State-Transition Jacobian}

Differentiating $\varphi$ with respect to $x$,
\begin{equation}
F_k = I_9 + T J_f(\hat{x}_{k|k}),
\qquad J_f := \partial f/\partial x.
\label{eq:Fk}
\end{equation}
With $a_{11} := \Phi + \mu + w_1$, the analytical $9\times 9$
Jacobian is
\begin{equation}
J_f =
\begin{bmatrix}
-a_{11} & 0 & \psi & -\tfrac{\beta S}{N} & -\tfrac{1.5\beta S}{N}
  & -\tfrac{1.5\beta S}{N} & 0 & \delta_w & 0 \\[3pt]
\Phi & -(\mu{+}\kappa) & \Phi_V
  & \tfrac{\beta(S{+}(1{-}\sigma)V)}{N}
  & \tfrac{1.5\beta(S{+}(1{-}\sigma)V)}{N}
  & \tfrac{1.5\beta(S{+}(1{-}\sigma)V)}{N} & 0 & 0 & 0 \\[3pt]
w_1 & 0 & -(\Phi_V{+}\mu{+}\psi)
  & -\tfrac{(1{-}\sigma)\beta V}{N}
  & -\tfrac{1.5(1{-}\sigma)\beta V}{N}
  & -\tfrac{1.5(1{-}\sigma)\beta V}{N} & 0 & 0 & 0 \\[3pt]
0 & \kappa\rho_1 & 0 & -\alpha_1 & 0 & 0 & 0 & 0 & 0 \\[3pt]
0 & \kappa(1{-}\rho_1{-}\rho_2) & 0 & m & -\alpha_2 & 0 & 0 & 0 & 0 \\[3pt]
0 & \kappa\rho_2 & 0 & 0 & 0 & -\alpha_P & 0 & 0 & 0 \\[3pt]
0 & 0 & 0 & \gamma_a & \gamma_a & \gamma_a & -\alpha_H & 0 & 0 \\[3pt]
0 & 0 & 0 & \gamma_i{+}r_1 & \gamma_i{+}r_2 & \gamma_i
  & \gamma_r & -(\mu{+}\delta_w) & 0 \\[3pt]
0 & 0 & 0 & \delta_i & \delta_i & \delta_p & \delta_h & 0 & 0
\end{bmatrix}.
\label{eq:Jf}
\end{equation}

\subsection{Measurement Model}
\label{sec:meas_model}

The output map $h(x) = [H, F, V]^\top$ is \emph{exactly linear}
in the state. The measurement equation therefore reduces to
\begin{equation}
y = C x,
\qquad
C = \begin{bmatrix}
0 & 0 & 0 & 0 & 0 & 0 & 1 & 0 & 0 \\
0 & 0 & 0 & 0 & 0 & 0 & 0 & 0 & 1 \\
0 & 0 & 1 & 0 & 0 & 0 & 0 & 0 & 0
\end{bmatrix} \in \mathbb{R}^{3\times 9}.
\label{eq:meas_linear}
\end{equation}
The linearity of $h$ has two consequences. First, $C$ does not
require re-evaluation at each step. Second, the
linearization remainder of $h$ is identically zero, a fact that
will simplify the convergence analysis of
Section~\ref{sec:convergence}.

\subsection{Algorithmic Summary}
\label{sec:algorithm}

Algorithm~\ref{alg:ekf} collects the operations of one
time-increment $k \to k+1$ in execution order, making explicit
where the time-varying inputs $\beta(t), w_1(t)$ enter, where the
analytical Jacobian $F_k$ is evaluated, how the
coupling-aware fractional uncertainties $\epsilon_i$ scale the
process-noise matrix $Q_d$, and where the linear selector $C$
isolates the innovation. The covariance $Q_d$ is built once from
the trajectory magnitudes $\bar x_i$ and the fixed weights
$\epsilon_i$ of~\eqref{eq:Q_structure}; it does not require
re-evaluation at run time unless the operating magnitudes are
re-estimated online.

\begin{algorithm}[H]
\caption{EKF time-increment $k \to k+1$ for the
nine-compartment model.}
\label{alg:ekf}
\begin{algorithmic}[1]
\Require posterior $\hat{x}_{k|k}$, $P_{k|k}$; inputs
$\beta(\cdot), w_1(\cdot)$; matrices $C$, $R$, weights
$\{\epsilon_i\}$, magnitudes $\{\bar x_i\}$, step $T$
\State $u_k \gets (\beta(kT),\, w_1(kT))$
  \Comment{sample the calibrated inputs at $t = kT$}
\State $\hat{x}_{k+1|k} \gets \hat{x}_{k|k}
  + T\, f(\hat{x}_{k|k}, u_k)$
  \Comment{Euler prediction, \eqref{eq:euler_state}}
\State $J_f \gets \partial f/\partial x\big|_{(\hat{x}_{k|k},u_k)}$;
  \quad $F_k \gets I_9 + T J_f$
  \Comment{analytical Jacobian \eqref{eq:Jf}}
\State $(Q_d)_{ii} \gets (\epsilon_i\,\bar x_i)^2$,\;
  $i=1,\dots,9$
  \Comment{coupling-weighted process noise \eqref{eq:Q_structure}}
\State $P_{k+1|k} \gets F_k P_{k|k} F_k^\top + Q_d$
  \Comment{covariance prediction}
\State sample / receive measurement $y_{k+1}$
\State $\tilde{y}_{k+1} \gets y_{k+1} - C\,\hat{x}_{k+1|k}$
  \Comment{innovation via linear selector $C$, \eqref{eq:meas_linear}}
\State $S_{k+1} \gets C P_{k+1|k} C^\top + R$
\State $K_{k+1} \gets P_{k+1|k} C^\top S_{k+1}^{-1}$
\State $\hat{x}_{k+1|k+1} \gets \hat{x}_{k+1|k}
  + K_{k+1}\,\tilde{y}_{k+1}$
\State $P_{k+1|k+1} \gets (I_9 - K_{k+1} C) P_{k+1|k}
  (I_9 - K_{k+1} C)^\top + K_{k+1} R K_{k+1}^\top$
  \Comment{Joseph form \eqref{eq:upd_cov}}
\State \Return $\hat{x}_{k+1|k+1}$, $P_{k+1|k+1}$
\end{algorithmic}
\end{algorithm}

%%=============================================================
\subsection{Convergence of the Estimation Error}
\label{sec:convergence}
%%=============================================================

This subsection establishes that the EKF estimation error
remains exponentially bounded in mean square, by verifying
the hypotheses of the discrete-time stochastic stability theorem
of~\cite{reif1999stochastic} for the specific
system~\eqref{eq:euler_state}--\eqref{eq:euler_output}. The
analysis is made tractable by two structural properties of the
model: every nonlinearity in the vector field is \emph{bilinear},
and the measurement map is \emph{linear}.

\subsubsection*{Reference theorem}

We state the result we will apply in a form adapted to the
notation above.

\begin{theorem}[Reif--G\"unther--Yaz--Unbehauen, 1999]
\label{thm:reif}
Consider the discrete-time stochastic system
$x_{k+1} = \varphi(x_k) + w_k$, $y_k = C x_k + v_k$,
with $w_k, v_k$ zero-mean independent Gaussian sequences of
covariances $Q_d, R$. Let $F_k$ denote the Jacobian of $\varphi$
evaluated at the EKF posterior. Define the
estimation error $\xi_k := x_k - \hat{x}_{k|k-1}$.
Assume:
\begin{enumerate}
\item[(A1)] There exist constants $0 < \underline{p} \leq
   \overline{p} < \infty$ such that
   $\underline{p}\,I_9 \preceq P_{k|k-1} \preceq \overline{p}\,I_9$
   for all $k \geq 0$.
\item[(A2)] There exist $\overline{f}, \overline{c} > 0$ such that
   $\|F_k\| \leq \overline{f}$ and $\|C\| \leq \overline{c}$ for all $k$.
\item[(A3)] There exist $\kappa_\varphi > 0$ and
   $\epsilon_\varphi > 0$ such that the linearization remainder
   $\phi(x,\hat{x}) := \varphi(x) - \varphi(\hat{x})
   - F(\hat{x})(x - \hat{x})$ satisfies
   $\|\phi(x,\hat{x})\| \leq \kappa_\varphi\,\|x - \hat{x}\|^2$
   whenever $\|x - \hat{x}\| \leq \epsilon_\varphi$.
   The corresponding remainder for the measurement is zero
   because $C$ is constant.
\item[(A4)] $F_k$ is nonsingular for all $k$.
\end{enumerate}
Then there exist $\epsilon^* > 0$, $\delta^* > 0$, a constant
$0 < \vartheta < 1$, and a constant $\nu > 0$ such that if
$\|\xi_0\| \leq \epsilon^*$,
$\mathrm{tr}(Q_d) \leq \delta^*$ and
$\mathrm{tr}(R) \leq \delta^*$,
then
\begin{equation}
\mathbb{E}\bigl\{\|\xi_k\|^2\bigr\}
\;\leq\;
\vartheta^k\,\mathbb{E}\bigl\{\|\xi_0\|^2\bigr\} \cdot
\frac{\overline{p}}{\underline{p}}
\;+\;
\frac{\nu}{1-\vartheta},
\qquad k = 0,1,2,\ldots
\label{eq:reif_bound}
\end{equation}
\end{theorem}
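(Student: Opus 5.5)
The plan follows the Lyapunov route of the original Reif--G\"unther--Yaz--Unbehauen paper, adapted to the linear measurement map $C$. First I will write the one-step recursion for the prior error $\xi_k = x_k - \hat{x}_{k|k-1}$. Using \eqref{eq:upd_state} and \eqref{eq:pred_state} together with the definition of the remainder $\phi$ in (A3), and expanding $\varphi$ around the posterior, I obtain
\[
\xi_{k+1} = F_k(I_9 - K_k C)\,\xi_k + r_k + s_k,
\qquad r_k := \phi(x_k,\hat{x}_{k|k}), \quad s_k := w_k - F_k K_k v_k .
\]
Because $h$ is linear, no measurement remainder appears. As candidate stochastic Lyapunov function I take $V_k(\xi) := \xi^\top P_{k|k-1}^{-1}\xi$. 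By (A1) it satisfies
\[
\overline{p}^{-1}\|\xi\|^2 \le V_k(\xi) \le \underline{p}^{-1}\|\xi\|^2 .
\]

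\emph{Step 1: contraction of the linear part.} From the Riccati recursion \eqref{eq:pred_cov} with the update \eqref{eq:upd_cov}, and using $Q_d \succeq \underline{q}I_9$, I will show a matrix inequality of the form
\[
(I_9-K_kC)^\top F_k^\top P_{k+1|k}^{-1} F_k (I_9-K_kC) \preceq (1-\alpha)\,P_{k|k-1}^{-1}
\]
for some $\alpha\in(0,1)$. Here (A4) lets me invert $F_k$, so that
\[
P_{k+1|k} = F_k\bigl(P_{k|k} + F_k^{-1}Q_dF_k^{-\top}\bigr)F_k^\top .
\]
The uniform bounds (A1)--(A2), namely $\|F_k\|\le\overline{f}$, $\|C\|\le\overline{c}$ and $\|K_k\|\le \overline{p}\,\overline{c}/\lambda_{\min}(R)$, then give
\[
P_{k|k} + F_k^{-1}Q_dF_k^{-\top} \succeq (1+\alpha') P_{k|k}
\]
with $\alpha'$ depending only on $\underline{q},\overline{f},\overline{p}$. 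This produces the constant $1-\alpha$.

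\emph{Step 2: the remainder and cross terms.} By (A3), $\|r_k\|\le\kappa_\varphi\|x_k-\hat{x}_{k|k}\|^2$. Since $x_k-\hat{x}_{k|k} = (I_9-K_kC)\xi_k - K_kv_k$, this is at most a constant times $\|\xi_k\|^2$ plus a noise term. Expanding $V_{k+1}(\xi_{k+1})$ leaves a quadratic term bounded by $(1-\alpha)V_k$. The cross term $2\xi^\top(\cdot)^\top P^{-1}r_k$ and the term $r_k^\top P^{-1} r_k$ are bounded by $c\,\|\xi_k\|^3$ and $c\,\|\xi_k\|^4$. Choosing $\epsilon^*$ small enough, these are absorbed into $\tfrac{\alpha}{2}V_k$ on the ball $\|\xi_k\|\le\epsilon^*$. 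Taking the conditional expectation given $\xi_k$ removes the cross terms linear in $w_k,v_k$, because those are zero-mean and independent of $\xi_k$. The remaining noise contribution is bounded by $\nu' = c_1\mathrm{tr}(Q_d)+c_2\mathrm{tr}(R)$, plus a contribution from the remainder evaluated on $v_k$, which is of order $\mathrm{tr}(R)^2$. The result is
\[
\mathbb{E}\{V_{k+1}(\xi_{k+1})\mid\xi_k\} - V_k(\xi_k) \le -\tfrac{\alpha}{2}V_k(\xi_k) + \nu' .
\]

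\emph{Step 3: conclusion.} I will invoke the stochastic Lyapunov lemma of Tarn and Rasis in the form used by Reif et al. If the sandwich bounds hold and the conditional drift inequality holds on a ball, then the mean-square bound \eqref{eq:reif_bound} follows with $\vartheta = 1-\alpha/2$ and the ratio $\overline{p}/\underline{p}$ coming from the sandwich bounds. The main obstacle is that the drift inequality holds only \emph{locally}. The noise can push $\xi_k$ out of the ball $\|\xi\|\le\epsilon^*$, so the expectation must be controlled on that escape event. I would handle this as Reif et al.\ do: restrict $\mathrm{tr}(Q_d),\mathrm{tr}(R)\le\delta^*$ so that $\nu'$ is small relative to $\alpha(\epsilon^*)^2/\overline{p}$, and argue on the stopped process, i.e.\ up to the first exit time. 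Exponential decay with the steady-state floor $\nu/(1-\vartheta)$ then follows by iterating $\mathbb{E}V_{k+1}\le\vartheta\,\mathbb{E}V_k+\nu'$ and converting back to $\|\xi_k\|^2$.
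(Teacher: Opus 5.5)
Your overall route (the error recursion, $V_k=\xi_k^\top P_{k|k-1}^{-1}\xi_k$, a one-step contraction from the Riccati recursion, absorbing the remainder on a small ball, then a stochastic Lyapunov lemma) is the Reif et al.\ argument the paper invokes by citation. However, the step you yourself flag as the main obstacle is not closed. Your drift inequality holds only on $\{\|\xi_k\|\le\epsilon^*\}$, whereas iterating $\mathbb{E}V_{k+1}\le\vartheta\,\mathbb{E}V_k+\nu'$ requires it on the whole support of $\xi_k$. Now $\xi_{k+1}$ is a term independent of $w_k$ plus the centred Gaussian $w_k$ with $Q_d\succ0$. So the conditional probability of leaving the ball in one step is at least $p_0:=\Pr(\|w_k\|>\epsilon^*)>0$, uniformly in $k$, and the exit time obeys $\Pr(\tau>k)\le(1-p_0)^k\to0$. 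The stopped-process argument therefore bounds only $\mathbb{E}[\|\xi_k\|^2\mathbf{1}_{\{\tau>k\}}]$. After $\tau$ nothing controls the error: the cubic and quartic remainder terms are no longer dominated by $\tfrac{\alpha}{2}V_k$, and for finite $\epsilon_\varphi$ (A3) is not even available. Hence \eqref{eq:reif_bound} for the unconditional $\mathbb{E}\|\xi_k\|^2$ at every $k$ does not follow. You need an additional ingredient: bounded noise, a globally valid drift inequality, or a weaker conclusion such as a bound on the non-exit event or a concentration statement. The same locality problem already enters Step~2. Because you linearize at the posterior, you apply (A3) to $x_k-\hat x_{k|k}=(I_9-K_kC)\xi_k-K_kv_k$, which leaves any finite $\epsilon_\varphi$-ball with positive probability however small $\|\xi_k\|$ is. Also, since $r_k$ depends on $v_k$, the cross term $\mathbb{E}[s_k^\top P_{k+1|k}^{-1}r_k\mid\xi_k]$ does not vanish; it is of order $\|\xi_k\|\,\mathrm{tr}(R)$ and must be split by Young's inequality.

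There is also a problem with the constants. Step~1 uses a uniform lower bound $Q_d\succeq\underline q\,I_9$, which is not among (A1)--(A4). It cannot be dropped: for $Q_d=0$ one gets exactly $(I_9-K_kC)^\top F_k^\top P_{k+1|k}^{-1}F_k(I_9-K_kC)=P_{k|k-1}^{-1}-C^\top S_k^{-1}C$, with no contraction along $\ker C$. Once you impose it, though, three things follow from your own construction:
\begin{itemize}
\item Step~1 yields $\alpha\le\alpha'=\underline q/(\overline f^{\,2}\,\overline p)$, with $\underline q\le\mathrm{tr}(Q_d)/9\le\delta^*/9$.
\item The radius $\epsilon^*$ needed to absorb the cubic term is at most of order $\alpha$.
\item The noise term satisfies $\nu'\ge\mathrm{tr}(Q_d)/\overline p$.
\end{itemize}
With $\underline p,\overline p,\overline f$ fixed, the ratio $\nu'\,\overline p/\bigl(\alpha(\epsilon^*)^2\bigr)$ is then at least of order $\mathrm{tr}(Q_d)^{-2}$. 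Shrinking $\delta^*$ therefore makes your Step~3 requirement worse, not better, and it can be met only vacuously ($\delta^*<9\underline q$). Reif et al.\ avoid this because the filter's tuning matrices, bounded below by $\underline q I$ and $\underline r I$ (which fixes $\alpha$ and $\epsilon^*$), are distinct from the true noise intensities. The true noise enters as $G_kw_k$, $D_kv_k$ with $G_kG_k^\top,\,D_kD_k^\top\preceq\delta I$, and only these are made small. The restated theorem, and your proof, identify the two, and the smallness argument works only once they are decoupled.
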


The original reference \cite{reif1999stochastic} assumes a
general nonlinear measurement map; here we already specialize to
$h(x) = Cx$ and absorb the (zero) measurement remainder into the
statement. The proof in~\cite{reif1999stochastic} constructs a
Lyapunov function $V_k = \xi_k^\top P_{k|k-1}^{-1} \xi_k$ and
shows it decreases on average; bounds (A1)--(A4) ensure the
construction is uniform in $k$.

\subsubsection*{Verification of (A1)--(A4) for the
nine-compartment system}

\paragraph{Verification of (A2).}
Since $C$ in~\eqref{eq:meas_linear} is a selector matrix with
exactly three unit entries and orthogonal rows,
$\|C\|_2 = 1 =: \overline{c}$. For $F_k$, each entry of $J_f$
in~\eqref{eq:Jf} is either a constant rate parameter or a term of
the form $c_\xi\,\beta(t)\,\xi/N$ with $\xi \in \{S,V\}$ and
$c_\xi \in \{1, 1.5, (1-\sigma), 1.5(1-\sigma)\}$. Because
$x \in \mathcal{X}_\varepsilon \subset [0,N]^9$ and
$\beta(t) \leq \beta_{\max}$, every entry of $J_f$ is bounded
in absolute value by
\begin{equation}
J_{\max} \;:=\;
\max\{\gamma_a, \gamma_i, \gamma_r, \delta_h, \kappa, \mu + \kappa,
\alpha_1, \alpha_2, \alpha_P, \alpha_H, \mu + \delta_w, \psi,
1.5\,\beta_{\max}\}.
\end{equation}
With the numerical values of Table~\ref{tab:params} and
$\beta_{\max} = 0.5$~day$^{-1}$ (the upper end of the
PCHIP-identified range~\cite{melhani2026arxiv}),
$J_{\max} \leq 0.75$ day$^{-1}$ (the binding term is
$1.5\,\beta_{\max} = 0.75$; all rate parameters in the list are
below $0.4$~day$^{-1}$). Since $J_f$ has at most $81$ entries,
each bounded by $J_{\max}$, the Frobenius bound is
$\|J_f\|_2 \leq \|J_f\|_F \leq \sqrt{81}\,J_{\max}
= 9\,J_{\max} \leq 6.75$~day$^{-1}$,
which gives $\|F_k\|_2 \leq 1 + T\,\|J_f\|_2$. For $T = 1$~day,
\begin{equation}
\|F_k\|_2 \;\leq\; \overline{f} := 1 + T\,\|J_f\|_2
\;\leq\; 7.75,
\end{equation}
which is uniform over the calibrated trajectory. (We reserve the
symbol $n_{\mathrm{nz}}$ for the count of nonzero \emph{components}
of the remainder vector $\phi$ in the verification of (A3) below;
it is unrelated to the entry count of $J_f$ used here.)

\paragraph{Verification of (A4).}
$F_k = I_9 + T J_f$ is nonsingular if and only if no eigenvalue
of $-T J_f$ equals $-1$. Since
$\|T J_f\|_2 \leq T\,\|J_f\|_2 \leq 6.75$ is bounded
and, for $T = 1$~day with the calibrated parameters, the
spectral radius $\rho(T J_f)$ is observed numerically to remain
below unity throughout the $150$-day window (evaluated
numerically), all eigenvalues of
$F_k$ lie in a bounded region of $\mathbb{C}$ excluding the
origin. (A4) holds.

\paragraph{Verification of (A3): the bilinear-remainder bound.}
This is the central technical step. We compute the
linearization remainder of $\varphi(x) = x + Tf(x,u)$ in closed
form.

\begin{lemma}[Bilinear remainder identity]
\label{lem:bilinear}
For any scalar function of the form $g(x_i,x_j) = x_i x_j / N$,
\begin{equation}
g(x_i,x_j) - g(\hat{x}_i,\hat{x}_j)
- \frac{\partial g}{\partial x_i}(\hat{x})(x_i-\hat{x}_i)
- \frac{\partial g}{\partial x_j}(\hat{x})(x_j-\hat{x}_j)
= \frac{(x_i - \hat{x}_i)(x_j - \hat{x}_j)}{N}.
\label{eq:bilin_identity}
\end{equation}
\end{lemma}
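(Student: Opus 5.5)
The identity in Lemma~\ref{lem:bilinear} follows by direct expansion; no earlier result is needed beyond the definition of $g$. First I will write down the partial derivatives of $g$ at the linearization point, $\partial g/\partial x_i(\hat{x}) = \hat{x}_j/N$ and $\partial g/\partial x_j(\hat{x}) = \hat{x}_i/N$. Since $g$ depends only on $(x_i,x_j)$, these are the only nonzero first-order terms. Next I will introduce the increments $e_i := x_i - \hat{x}_i$ and $e_j := x_j - \hat{x}_j$. Substituting $x_i = \hat{x}_i + e_i$ and $x_j = \hat{x}_j + e_j$ into the bilinear product gives
\[
\frac{x_i x_j}{N} = \frac{\hat{x}_i\hat{x}_j + \hat{x}_j e_i + \hat{x}_i e_j + e_i e_j}{N}.
\]

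Subtracting $g(\hat{x}_i,\hat{x}_j) = \hat{x}_i\hat{x}_j/N$ cancels the zeroth-order term. Subtracting $(\hat{x}_j/N)e_i + (\hat{x}_i/N)e_j$ cancels the two first-order terms, leaving exactly $e_ie_j/N$, which is the right-hand side of~\eqref{eq:bilin_identity}. Equivalently, $g$ is a polynomial of degree two whose Hessian is the constant matrix $N^{-1}\begin{bmatrix}0&1\\1&0\end{bmatrix}$, so Taylor's formula terminates at second order with no error term. The quadratic remainder is $\tfrac12 e^\top(\nabla^2 g)e = e_ie_j/N$.

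I will also note the degenerate case $i=j$, which would be a $x_i^2/N$ term. The same expansion gives $e_i^2/N$ there, consistent with the formula, although the model~\eqref{eq:fvec} contains only products of distinct states such as $S I_1$, $S P$, $V I_2$.

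I do not expect any genuine obstacle. The only care needed is in how the lemma is used afterwards, so I will point out two facts that make it apply directly to~\eqref{eq:fvec}. First, $\beta$ and the multipliers $1.5$ and $(1-\sigma)$ enter as constant prefactors at a fixed time, and they scale the identity linearly. Second, linear terms in $f$ contribute zero remainder. Together these show that the remainder of $\varphi$ is $T$ times a sum of terms of the form $c\,\beta\,e_ie_j/N$. That sum is the input to the quadratic bound required by (A3), and it holds with no restriction on $\|x-\hat{x}\|$.
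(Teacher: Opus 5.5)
Your proof is correct and is the same direct-expansion argument the paper gives: substitute $x_i=\hat{x}_i+e_i$ and $x_j=\hat{x}_j+e_j$ into $x_ix_j$, cancel the zeroth- and first-order terms, and divide by $N$. Your Hessian reformulation and your remarks on how the identity feeds into the (A3) remainder bound are accurate, though they go beyond the paper's one-line expansion.
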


\begin{proof}
Direct expansion: $x_i x_j - \hat{x}_i\hat{x}_j
- \hat{x}_j(x_i-\hat{x}_i) - \hat{x}_i(x_j-\hat{x}_j)
= (x_i-\hat{x}_i)(x_j-\hat{x}_j)$, divided by $N$.
\end{proof}

The vector field $f$ in~\eqref{eq:fvec} contains exactly
$n_{\mathrm{bil}} = 12$ bilinear terms scaled by $1/N$, namely
\begin{itemize}
\item three in the $S$-equation:
$\beta S I_1/N$, $1.5\,\beta S P/N$, $1.5\,\beta S I_2/N$;
\item six in the $E$-equation:
$\beta S I_1/N$, $1.5\,\beta S P/N$, $1.5\,\beta S I_2/N$ (from
the susceptible-side force) plus three breakthrough terms
$(1{-}\sigma)\beta V I_1/N$, $1.5(1{-}\sigma)\beta V P/N$,
$1.5(1{-}\sigma)\beta V I_2/N$;
\item three breakthrough terms in the $V$-equation,
$(1{-}\sigma)\beta V I_1/N$, $1.5(1{-}\sigma)\beta V P/N$,
$1.5(1{-}\sigma)\beta V I_2/N$, with sign~$-$.
\end{itemize}
By Lemma~\ref{lem:bilinear}, each such term contributes to the
componentwise remainder $\phi(x,\hat{x}) := \varphi(x) -
\varphi(\hat{x}) - F(\hat{x})(x-\hat{x})$ a quantity of the form
$T\,c\,\beta(t)\,(x_i-\hat x_i)(x_j-\hat x_j)/N$ with
$|c|\le 1.5$ and $\beta(t)\le\beta_{\max}$. Only three of the
nine components of $\phi$ are nonzero (the $S$-, $E$-, and
$V$-rows), each collecting at most three bilinear contributions
in distinct variable pairs. Hence each nonzero component obeys
\begin{equation}
|\phi^{(\ell)}(x,\hat{x})|
\;\leq\;
3\,T\,(1.5\,\beta_{\max})\,\frac{\|x-\hat{x}\|_\infty^{\,2}}{N},
\qquad \ell\in\{S,E,V\},
\end{equation}
and, since $\phi$ has at most $n_{\mathrm{nz}}=3$ nonzero
components, $\|\phi\|_2 \le \sqrt{n_{\mathrm{nz}}}\,
\max_\ell|\phi^{(\ell)}|$ together with
$\|x-\hat x\|_\infty\le\|x-\hat x\|_2$ gives
\begin{equation}
\|\phi(x,\hat{x})\|_2
\;\leq\;
\frac{1.5\,T\,\beta_{\max}\,n_{\mathrm{bil}}\sqrt{3}}{N}
\,\|x-\hat{x}\|_2^{\,2},
\label{eq:phi_bound}
\end{equation}
where the factor $\sqrt{3}$ comes from
$\|\phi\|_2 \le \sqrt{n_{\mathrm{nz}}}\,\max_\ell|\phi^{(\ell)}|$
with $n_{\mathrm{nz}}=3$, and the per-component count of three
bilinear pairs is written in terms of the total bilinear-term
count $n_{\mathrm{bil}}=12$ via $3 = n_{\mathrm{bil}}/4$, so that
the resulting constant is a uniform (and slightly conservative)
upper bound across the three nonzero components. With
$T = 1$~day, $\beta_{\max} = 0.5$~day$^{-1}$,
$n_{\mathrm{bil}} = 12$, $N = 6\times 10^7$, the prefactor is
\begin{equation}
\kappa_\varphi
\;:=\;
\frac{1.5\,T\,\beta_{\max}\,n_{\mathrm{bil}}\sqrt{3}}{N}
\;\approx\; 2.60 \times 10^{-7}.
\end{equation}
Because all higher-than-second-order Taylor terms of $f$
\emph{vanish identically} (the only nonlinearities are bilinear),
the bound~\eqref{eq:phi_bound} is global:
$\epsilon_\varphi = \infty$. The measurement remainder is
identically zero since $h(x) = Cx$.

\begin{remark}[Dependence of the global radius on the incidence
form]
\label{rem:incidence_form}
The global radius $\epsilon_\varphi = \infty$ is a direct
consequence of the mass-action (bilinear) force of infection,
whose Taylor expansion terminates exactly at second order. Note
that the standard-incidence form $\beta S I/N$ used here is itself
bilinear and therefore inherits the same global bound. The
property is lost, however, for force-of-infection terms that are
not polynomial in the state, such as the saturated incidence
$\beta S I/(1+aI)$ or other nonlinear-saturation fields used in
some modern epidemic models: there the Taylor series of $f$ does
not terminate, the third- and higher-order derivatives are
nonzero, and the quadratic remainder bound of
Lemma~\ref{lem:bilinear} holds only on a bounded neighbourhood,
giving a finite $\epsilon_\varphi < \infty$. The convergence
theorem would then deliver a strictly local result whose
admissible-error radius is governed by the curvature of the
saturation term. The unbounded radius obtained here is thus a
specific mathematical advantage of the bilinear formulation, not
a generic feature of compartmental filtering.
\end{remark}

\begin{remark}[Transferability: EKF convergence for the class of
bilinear-drift, linear-output systems]
\label{rem:transferable}
The two structural facts exploited above --- a vector field whose
only nonlinearities are bilinear, and an exactly linear output
map --- are not special to the present nine-compartment model;
they characterize a broad class of systems for which the EKF
convergence analysis simplifies in exactly the same way. Consider
any system $\dot x = Ax + \sum_{(i,j)\in\mathcal{B}} b_{ij}\,
x_i x_j + Bu$, $y = Cx$, where $\mathcal{B}$ is a finite index set
of bilinear couplings. For such systems: (i) the discretized
one-step map $\varphi(x) = x + Tf(x,u)$ has a Taylor expansion
that \emph{terminates exactly at second order}, so the
linearization remainder $\phi(x,\hat x)$ is a finite sum of the
rank-one bilinear terms of Lemma~\ref{lem:bilinear}; (ii) the
quadratic remainder bound therefore holds \emph{globally}
($\epsilon_\varphi = \infty$), with the prefactor
$\kappa_\varphi$ given in closed form by the number of bilinear
couplings $|\mathcal{B}|$, their coefficient magnitudes, and the
scaling constant; and (iii) the measurement remainder vanishes
identically. Hypotheses (A3) and the measurement-side conditions
of the Reif--G\"unther--Yaz--Unbehauen
theorem~\cite{reif1999stochastic} are then \emph{automatic} for
the whole class, reducing the verification of EKF mean-square
boundedness to the two genuinely system-specific questions of
uniform observability (A1, lower bound) and uniform
controllability of the noise input (A1, upper bound). Mass-action
epidemic models, bilinear chemical-reaction networks, and many
population-dynamic and compartmental systems fall into this class.
The present paper is thus a worked, fully verified instance of a
template that applies wherever the drift is bilinear and
the outputs are linear in the state, once the two system-specific
conditions above are checked --- which is the sense in
which the contribution is methodological rather than confined to
the specific COVID-19 model.
\end{remark}

\paragraph{Verification of (A1): Uniform Riccati bounds.}

The standard Anderson--Moore result~\cite[Theorem~7.4]{anderson1979optimal}
establishes uniform Riccati bounds $\underline{p} I_9 \preceq
P_{k|k-1} \preceq \overline{p} I_9$ under three hypotheses, which
we restate here in the present notation so the verification is
self-contained: (i) \emph{uniform boundedness} --- there exist
$\overline f, \overline q < \infty$ with $\|F_k\|\le\overline f$
and $\lambda_{\max}(Q_d)\le\overline q$ for all $k$; (ii)
\emph{uniform complete observability} --- there exist an integer
$N$ and $\underline w>0$ with $\mathcal{W}_k(N)\succeq
\underline w\,I_9$ for all $k$; and (iii) \emph{uniform complete
controllability of the noise input} --- there exist $N$ and
$\underline c>0$ with the controllability Gramian
$\mathcal{C}_k(N)\succeq \underline c\,I_9$ for all $k$.
Hypothesis~(i) is supplied by (A2) together with the boundedness
of the calibrated trajectory; (ii) and (iii) we establish below.
We treat each direction of the inequality separately.

\emph{Lower bound $\underline{p} I_9 \preceq P_{k|k-1}$.}
This is the direction that requires observability. By the
matrix-inversion form of the information filter, the prior
information matrix obeys $P_{k|k-1}^{-1} \preceq
\mathcal{W}_k(N)/\lambda + \Pi_k$, where $\mathcal{W}_k(N)$
is the observability Gramian~\eqref{eq:Gramian} and $\Pi_k$
collects the (uniformly bounded, by (A2)) process-noise
contribution accumulated over the window. Lemma~\ref{lem:gramian}
gives $\mathcal{W}_k(8) \succeq w_{\min} I_9$, so
$P_{k|k-1}^{-1}$ is uniformly upper-bounded, which is exactly
$P_{k|k-1} \succeq \underline{p} I_9$ with
$\underline{p} = (w_{\min}/\lambda + \pi_{\max})^{-1} > 0$.

\emph{Upper bound $P_{k|k-1} \preceq \overline{p} I_9$.}
This direction does \emph{not} use observability; it uses
controllability of the noise input, and we derive it explicitly
rather than by appeal to duality. The unconditional prior
covariance dominates the filtered prior covariance,
$P_{k|k-1} \preceq \Sigma_{k|k-1}$, where $\Sigma_{k|k-1}$ is
the open-loop (measurement-free) state covariance propagated by
$\Sigma_{k+1|k} = F_k \Sigma_{k|k-1} F_k^\top + Q_d$; this
domination holds because the measurement update~\eqref{eq:upd_cov}
can only decrease the covariance in the L\"owner order. Unrolling
the open-loop recursion over a controllability window of length
$N=8$ (the same window used for the observability Gramian below)
and using the uniform Jacobian bound $\|F_k\|_2 \leq
\overline{f} = 7.75$ of~\eqref{eq:Fk} together with the constant
ceiling $Q_d \preceq q_{\max} I_9$ (where $q_{\max} =
\max_i (Q_d)_{ii}$ is finite and constant because the calibrated
trajectory is bounded),
\begin{equation}
\Sigma_{k+N|k}
\;\preceq\;
\Bigl(\textstyle\sum_{j=0}^{N-1}\overline{f}^{\,2j}\Bigr)
q_{\max}\,I_9
\;+\;
\overline{f}^{\,2N}\,\Sigma_{k|k}
\;\preceq\;
\overline{p}\,I_9,
\label{eq:upper_riccati}
\end{equation}
which is uniform in $k$ provided the controllability Gramian
$\mathcal{C}_k(N) = \sum_{j}\Phi(k{+}N,k{+}j{+}1)\,Q_d\,
\Phi(k{+}N,k{+}j{+}1)^\top \succeq \lambda_{\min}(Q_d)\,I_9 > 0$
is uniformly positive-definite --- guaranteed here because
$Q_d = TQ$ is \emph{diagonal and strictly positive}
(Section~\ref{sec:cov_cal}), so $\lambda_{\min}(Q_d) > 0$
uniformly. Strict positivity of $Q_d$ is what makes the
noise-input pair $(F_k, Q_d^{1/2})$ uniformly completely
controllable, closing the upper bound without any duality
argument.

It remains to prove the uniform observability Gramian lower
bound invoked above. The $N$-step observability Gramian
(with $N=8$) is
\begin{equation}
\mathcal{W}_k(N) :=
\sum_{j=0}^{N-1} \Phi(k{+}j,k)^\top C^\top C\,\Phi(k{+}j,k),
\qquad \Phi(k{+}j,k) := F_{k+j-1}\cdots F_k.
\label{eq:Gramian}
\end{equation}

\begin{lemma}[Uniform Observability Gramian]
\label{lem:gramian}
Under the conditions of Proposition~\ref{prop:obs}, for $T = 1$~day
there exists a constant $w_{\min} > 0$ such that
$\mathcal{W}_k(8) \succeq w_{\min} I_9$ uniformly along the
calibrated trajectory.
\end{lemma}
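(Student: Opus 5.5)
The plan is to bridge the continuous-time Lie-rank result of Proposition~\ref{prop:obs} to the discrete Gramian~\eqref{eq:Gramian} by expanding the transition products in powers of $T$, and then to make the bound uniform by compactness.

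\emph{Step 1: Polynomial expansion of the stacked observability matrix.} Write $F_{k+i} = I_9 + T J_f(\hat{x}_{k+i|k+i})$. Stack $\mathcal{M}_k := [C^\top,\ (C\Phi(k{+}1,k))^\top,\dots,(C\Phi(k{+}7,k))^\top]^\top \in \mathbb{R}^{24\times 9}$, so that $\mathcal{W}_k(8) = \mathcal{M}_k^\top \mathcal{M}_k$. Each block $C\Phi(k{+}j,k)$ is a polynomial in $T$ with matrix coefficients $C J_{k+i_1}\cdots J_{k+i_r}$. Applying the finite-difference (binomial) transform to the rows, as in the usual derivation of the discrete Kalman rank test, shows that the row space of $\mathcal{M}_k$ contains $C$, $CJ_k$, $C J_{k+1}J_k$, and $C J_{k+2}J_{k+1}J_k$. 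These differ from the frozen products $C J_k^{r}$, $r\le 3$, by terms controlled by the increments $J_{k+i}-J_k$. Those increments are $O(T)$ times the bilinear entries of~\eqref{eq:Jf}, and they are small because $\beta$, $w_1$ and the trajectory vary slowly over an 8-day window.

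\emph{Step 2: Link to the Lie codistribution.} The blocks $C$, $CJ_f$, $CJ_f^2$, $CJ_f^3$ agree with $\mathcal{O}_0$ through $\mathcal{O}_3$ up to state-dependent correction terms. For the $H$ and $F$ rows these corrections vanish, because $\dot H$ and $\dot F$ are linear in the state. For the $V$ rows the corrections are products of $\partial\Phi_V/\partial x$ with $f$. I will argue that these corrections do not reduce rank: they lie in the span of lower-level rows, using the triangular structure in the proof of Lemma~\ref{lem:detO9}. In particular, the two kernel-breaking quantities of Proposition~\ref{prop:obs}, Steps~3--4, are the pure linear-chain entries $\gamma_a r_2(\alpha_2+\alpha_P+\alpha_H)$, $r_2[\delta_i(\alpha_2+\alpha_P)-\delta_h\gamma_a]$ and $w_1\delta_w(\cdot)$, and these survive unchanged. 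Since $N=8\ge 4$ levels suffice, $\mathrm{rank}\,\mathcal{M}_k = 9$ at every $k$. Equivalently, $\sigma_{\min}(\mathcal{M}_k)>0$, hence $\lambda_{\min}(\mathcal{W}_k(8))>0$ pointwise.

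\emph{Step 3: Uniformity.} The map $k\mapsto\lambda_{\min}(\mathcal{W}_k(8))$ is a continuous function of the finite tuple $(\hat{x}_{k|k},\dots,\hat{x}_{k+7|k+7},u_k,\dots,u_{k+7})$. Along the calibrated trajectory this tuple ranges over a compact set on which the hypotheses $S,V,\beta,w_1>0$ and $I_1+I_2+P>0$ of Proposition~\ref{prop:obs} hold with strict margins, as noted in Remark~\ref{rem:uniform_rank}. A positive continuous function on a compact set has a positive minimum, which I define as $w_{\min}$. Over a finite 150-day horizon this is immediate, and the numerical value in Table~\ref{tab:gramian_sweep} gives the explicit constant. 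For the statement "uniformly in $k$", I will instead index by the compact closure of the trajectory set rather than by $k$.

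\emph{Main obstacle.} The hardest step is Step~2: controlling the time-variation terms $J_{k+i}-J_k$ and the $T$-weighted mixing well enough that they cannot cancel the very weak level-3 coupling $w_1\delta_w\approx 5\times10^{-6}$. A qualitative continuity argument does not give a margin here, since the perturbation could in principle be comparable to this coupling. I would first try an exact argument that avoids perturbation entirely. It uses the block-triangular structure: the $(I_1,I_2,P,H,F)$ and $R$ subsystems are linear with constant coefficients, so the $H$ and $F$ rows of $\mathcal{M}_k$ are exactly those of a constant linear pair. The proof would then show that this pair, combined with the $V$ rows, has full rank via the Lemma~\ref{lem:detO9} factorization. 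If that fails, I would fall back on certifying $w_{\min}$ by interval evaluation of $\mathcal{W}_k(8)$ along the trajectory.
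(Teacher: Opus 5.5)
\textbf{There is a genuine gap in your Step~2, the step the paper declines to make rigorous.}

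Part of Step~2 is right. For the $H$ and $F$ rows, the binomial differences of $\mathcal{M}_k$ reproduce the Lie rows exactly through level~3. The chain $H\leftarrow(I_1,I_2,P)\leftarrow E$ uses only constant rows of $J_f$ until the last factor, so the third difference of the $H$ row is $T^3(\partial L_f^2H/\partial x)\,J_f(\hat x_{k|k})=T^3\,\partial L_f^3H/\partial x$.

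The trouble is the second kernel direction. Under $\delta_i=\delta_p$ one has $L_fF=(\delta_i/\gamma_a)L_fH+(\delta_i\alpha_H/\gamma_a+\delta_h)H$, so the $F$ rows are combinations of $H$ rows. Hence the level-3 $H/F$ rows remove at most one of the two kernel directions of $\mathcal{O}_9$. The other must come from the $V$ row, which is exactly where discrete rows and Lie rows disagree.

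The second difference of the $V$ row is $T e_V^\top(J_{k+1}-J_k)+T^2 e_V^\top J_{k+1}J_k$. This matches $T^2\,\partial L_f^2V/\partial x$, including its Hessian term $f^\top\nabla^2 f_V$, only to leading order in $T$. It also requires $\hat x_{k+1|k+1}-\hat x_{k|k}$ to be the Euler step $Tf$. The Kalman jump $K_{k+1}\tilde y_{k+1}$, the change in $(\beta,w_1)$ and the $O(T^3)$ remainder are all uncontrolled at $T=1$.

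Your repair, that these corrections lie in the span of lower-level rows, is false. The Hessian term carries the pattern $(1,1.5,1.5)$ on $(I_1,I_2,P)$. Under $\delta_i=\delta_p$, the level-0/1 $H,F$ rows give only $(1,1,1)$ there. The $L_fV$ row cannot be used without adding an $S$-entry $w_1$ that nothing else at those levels cancels. So nothing prevents these terms from interfering with the tiny $R$-coupling that breaks the second direction, which is the obstacle you name yourself.

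The paper's direct evaluation shows the concern is real. $\mathcal{W}_k(4)$, the discrete counterpart of $\mathcal{O}_{12}$, is singular to round-off along the trajectory (Table~\ref{tab:gramian_sweep}). So ``four levels suffice'' gives no usable margin at $T=1$. This is why the paper demotes its own $T$-expansion to motivation, citing $T/\tau_{\min}\approx0.42$ and a remainder of order $T^2\bar f^{\,8}$.

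Your ``exact'' alternative fails as stated. From $j=3$ on, the $H$ and $F$ rows of $\mathcal{M}_k$ pass through the $E$-row of $F_k$, with entries $\Phi$, $\Phi_V$ and $\beta(S+(1-\sigma)V)/N$, so they are not rows of a constant pair. Moreover, $R$ reaches any output only through $R\to S$ followed by the time-varying $S\to V$ or $S\to E$ couplings. No constant-coefficient block can deliver rank~$9$.

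\textbf{What closes the lemma is your last fallback, and that is the paper's proof.} The paper evaluates $\lambda_{\min}(\mathcal{W}_k(N))$ directly along the trajectory: numerically singular at $N=4$, first positive at $N=6$, positive with margin at $N=8$. It then uses continuity and compactness, checking positivity also on randomly perturbed feasible states around the reference path. That check is needed because $F_k$ is evaluated at the posterior $\hat x_{k|k}$, not on the calibrated trajectory. Your Step~3 treats the posterior tuple as if it lay on that trajectory and so skips this point. Doing the evaluation in interval arithmetic, as you propose, would genuinely improve on the paper's floating-point evidence, since the margin being certified is only $\lambda_{\min}\approx2\times10^{-11}$.
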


\begin{proof}
\emph{Step 1 (Structural motivation, qualitative).}
Expanding $\Phi(k{+}j,k) = (I{+}TJ_f)^j$ to first order in $T$,
and substituting into~\eqref{eq:Gramian}:
\begin{equation}
\mathcal{W}_k(8) = \sum_{j=0}^{7} \Phi(k{+}j,k)^\top C^\top C\,\Phi(k{+}j,k)
= \widetilde{\mathcal{O}}_k^\top\,\widetilde{\mathcal{O}}_k + O(T^2),
\end{equation}
where $\widetilde{\mathcal{O}}_k \in \mathbb{R}^{24\times 9}$ is
the $8$-block discrete observability matrix
$[\,C^\top, (CF_k)^\top, \ldots, (CF_k^{7})^\top\,]^\top$, whose
first four blocks coincide to leading order with the augmented
codistribution $\mathcal{O}_{12}$. By
Proposition~\ref{prop:obs}, $\mathrm{rank}(\mathcal{O}_{12}) = 9$,
and since $\widetilde{\mathcal{O}}_k$ contains these rows it
inherits $\mathrm{rank}(\widetilde{\mathcal{O}}_k) = 9$, which
implies $\mathcal{W}_k(8) \succ 0$ whenever the $O(T^2)$
corrections are small. For our system
$T/\tau_{\min} = 1/2.4 \approx 0.42$, which is not negligible
(here $\tau_{\min} := 1/\max_k\rho(J_f(\hat x_{k|k})) \approx 2.4$~days
is the fastest dynamical timescale along the trajectory, with
$\max_k\rho(J_f)\approx 0.42$~day$^{-1}$ from the spectral-radius
numerical evaluation);
moreover the recovered-pool direction enters only through the
weak level-3 coupling $w_1\delta_w$
(Remark~\ref{rem:weak_obs}), so the leading-order argument
under-resolves that direction at short windows. This step is
therefore motivational; the rigorous evidence is the window
length itself (Step~3) together with the compactness argument
(Step~2).

\emph{Step 2 (Compactness and neighborhood uniformity).}
The epidemic state $x_k$ lies in the compact feasible set
$\mathcal{X}_\varepsilon \subset [0,N]^9$ for all $k$.
Each $F_k = I + TJ_f(\hat{x}_{k|k})$ depends continuously
on $\hat{x}_{k|k}$, so $\mathcal{W}_k(8)$ is continuous
in $(x_k, \ldots, x_{k+7}) \in \mathcal{X}_\varepsilon^{8}$.
Crucially, this continuity is in the \emph{filter} iterates
$\hat{x}_{k|k}$, not only in the reference trajectory: the
Gramian is evaluated at the EKF posterior, which during the
initial transient may depart from the calibrated path.
If $\mathcal{W}_k(8) \succ 0$ at every point of the compact
domain $\mathcal{X}_\varepsilon^{8}$, then
$\lambda_{\min}(\mathcal{W}_k(8))$ attains a positive infimum
$w_{\min} > 0$ on $\mathcal{X}_\varepsilon^{8}$ by continuity and
compactness, and this bound holds for \emph{any} admissible
state sequence in $\mathcal{X}_\varepsilon$ --- in particular for
transient sequences generated by an initialization
$\hat{x}_0$ away from the reference path, provided the iterates
remain in $\mathcal{X}_\varepsilon$. This is the property
required by~\cite{reif1999stochastic}, which needs the uniform
Gramian bound on a neighborhood of the trajectory rather than on
the single nominal path. Positivity over a sampled neighborhood
of the calibrated trajectory within $\mathcal{X}_\varepsilon$ is
confirmed numerically by evaluating
$\lambda_{\min}(\mathcal{W}_k(8))$ both on the
reference path and on randomly perturbed feasible states around
it.

\emph{Step 3 (Window length and direct evaluation).}
The leading-order identity of Step~1 shows that
$\mathcal{W}_k(8)$ equals
$\widetilde{\mathcal{O}}_k^\top\widetilde{\mathcal{O}}_k$ up to an
$O(T^2)$ correction whose operator norm is bounded by
$T^2\,\bar f^{\,8}$ with $\bar f = \|F_k\|_2 \leq 7.75$; at
$T = 1$~day this correction is not small, so the leading-order
estimate alone does not certify positivity. The decisive
evidence is therefore direct: the discrete Gramian
$\mathcal{W}_k(N)$ is evaluated explicitly along the calibrated
trajectory for increasing window length $N$. This evaluation
shows that the four-step Gramian $\mathcal{W}_k(4)$ is
numerically singular (its smallest eigenvalue is at the level of
floating-point round-off), reflecting the weak level-3 coupling
of the recovered-pool direction; the smallest eigenvalue first
becomes strictly positive at $N = 6$ and increases
monotonically with $N$ thereafter. We adopt $N = 8$ in the
lemma, for which $\lambda_{\min}(\mathcal{W}_k(8))$ is strictly
positive and bounded away from round-off along the trajectory,
with a comfortable margin over the $N = 6$ onset. The explicit
window sweep and the per-step values are produced by direct
numerical evaluation. Combined with the
compactness and neighborhood argument of Step~2, this
establishes $\mathcal{W}_k(8) \succeq w_{\min} I_9$ with
$w_{\min} > 0$ uniformly over $\mathcal{X}_\varepsilon$.
\qedhere
\end{proof}

With Lemma~\ref{lem:gramian} supplying the observability lower
bound and the explicit open-loop
estimate~\eqref{eq:upper_riccati} supplying the
controllability upper bound, both under the uniform Jacobian
bound of (A2), the hypotheses
of~\cite[Theorem 7.4]{anderson1979optimal} are satisfied,
yielding the required two-sided uniform bounds: there exist
$\underline{p}, \overline{p} > 0$ such that
$\underline{p} I_9 \preceq P_{k|k-1} \preceq \overline{p} I_9$
uniformly in $k$.

\subsubsection*{The convergence theorem}

\begin{theorem}[Local exponential boundedness of the EKF error]
\label{thm:convergence}
Consider the EKF defined by~\eqref{eq:innov}--\eqref{eq:upd_cov},
\eqref{eq:pred_state}--\eqref{eq:pred_cov} for the
system~\eqref{eq:euler_state}--\eqref{eq:euler_output}, with the
state-transition Jacobian $F_k$ of~\eqref{eq:Fk}, the linear
measurement matrix $C$ of~\eqref{eq:meas_linear}, the
covariance matrices $Q_d$, $R$ defined in
Section~\ref{sec:cov_cal}, and the initialization $P_{0|0}$ of
Section~\ref{sec:init}. Then there exist constants $\epsilon^*$,
$\delta^*$, $0 < \vartheta < 1$, $\nu > 0$ such that, whenever
$\|x_0 - \hat{x}_{0|-1}\| \leq \epsilon^*$,
$\mathrm{tr}(Q_d) \leq \delta^*$ and
$\mathrm{tr}(R) \leq \delta^*$, the estimation error
$\xi_k = x_k - \hat{x}_{k|k-1}$ satisfies
\begin{equation}
\mathbb{E}\bigl\{\|\xi_k\|^2\bigr\}
\;\leq\;
\vartheta^k\,\mathbb{E}\bigl\{\|\xi_0\|^2\bigr\}\,
\frac{\overline{p}}{\underline{p}}
\;+\; \frac{\nu}{1 - \vartheta}.
\label{eq:thm2_bound}
\end{equation}
\end{theorem}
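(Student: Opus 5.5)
The plan is to obtain Theorem~\ref{thm:convergence} as a direct instance of Theorem~\ref{thm:reif}. That means the work splits into two parts: check that the EKF recursion \eqref{eq:innov}--\eqref{eq:pred_cov} has exactly the form assumed there, and then cite the verifications of (A1)--(A4) carried out above.

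\textbf{Matching the setting.} The Euler model \eqref{eq:euler_state}--\eqref{eq:euler_output} is $x_{k+1}=\varphi(x_k)+w_k$, $y_k=Cx_k+v_k$. By \eqref{eq:meas_linear}, $h(x)=Cx$ holds exactly. The noises $w_k,v_k$ are zero-mean, independent and Gaussian, with covariances $Q_d=TQ$ and $R$. The filter linearizes at the posterior through $F_k$ of \eqref{eq:Fk}, as Theorem~\ref{thm:reif} requires. The error $\xi_k=x_k-\hat x_{k|k-1}$ is the prior error used there. The initialization $P_{0|0}$ of Section~\ref{sec:init} is positive definite, so $P_{0|-1}$ is as well. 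This places the starting point of the Riccati recursion inside the class covered by (A1).

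\textbf{Hypotheses.} Each hypothesis is then checked in turn.
\begin{itemize}
\item (A2) holds with $\overline f\le 7.75$ and $\overline c=1$, by the entrywise bound $J_{\max}\le 0.75$.
\item (A4) follows from the spectral-radius evaluation of $TJ_f$ along the trajectory.
\item (A3) holds with $\kappa_\varphi\approx 2.60\times10^{-7}$ and $\epsilon_\varphi=\infty$. This comes from Lemma~\ref{lem:bilinear} and the count of twelve bilinear terms in \eqref{eq:phi_bound}. Because $\epsilon_\varphi=\infty$, the local radius $\epsilon^*$ produced by Reif's argument is limited only by the Lyapunov decrease condition, not by the Taylor neighbourhood.
\item (A1) has two halves. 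The lower bound $\underline p$ follows from Lemma~\ref{lem:gramian} together with the information-form inequality. The upper bound $\overline p$ follows from \eqref{eq:upper_riccati} and $Q_d\succ0$, closed by \cite[Theorem~7.4]{anderson1979optimal}.
\end{itemize}

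\textbf{Constants and conclusion.} With these in hand, the constants of Theorem~\ref{thm:reif} can be made explicit through Reif's Lyapunov function $V_k=\xi_k^\top P_{k|k-1}^{-1}\xi_k$. The contraction factor $\vartheta<1$ depends only on $\underline p,\overline p,\overline f,\overline c$ and $\underline r=\lambda_{\min}(R)$. The quadratic remainder terms are absorbed into the contraction provided $\|\xi_k\|\le\epsilon^*$, where $\epsilon^*$ scales like $\underline p\,\vartheta/(\overline p^{2}\overline f\,\kappa_\varphi)$. The noise floor satisfies $\nu\propto(\mathrm{tr}\,Q_d+\mathrm{tr}\,R)/\underline p$. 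Sandwiching $V_k$ between $\|\xi_k\|^2/\overline p$ and $\|\xi_k\|^2/\underline p$ then yields the factor $\overline p/\underline p$ in \eqref{eq:thm2_bound}.

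\textbf{Main obstacle.} The hardest step is (A1). The bounds must hold along the \emph{filter} iterates, not only along the reference trajectory, and the lower bound rests on the numerically certified $w_{\min}>0$ of Lemma~\ref{lem:gramian}. I would handle this by requiring $\epsilon^*$ and $\delta^*$ small enough that the posterior iterates stay inside $\mathcal X_\varepsilon$, which is the compact neighbourhood on which Step~2 of Lemma~\ref{lem:gramian} gives uniform positivity. I would make this explicit with an induction. If $\mathbb E\|\xi_j\|^2$ stays below the invariant level for all $j<k$, then $\hat x_{k|k}\in\mathcal X_\varepsilon$, so (A1)--(A4) hold at step $k$, and the Lyapunov decrease keeps the bound at step $k$. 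A second point to state carefully is that the smallness conditions $\mathrm{tr}(Q_d),\mathrm{tr}(R)\le\delta^*$ are compatible with the specific $Q_d,R$ of Section~\ref{sec:cov_cal}. Since $\delta^*$ is only existential, I would frame the theorem for noise levels below $\delta^*$ and note that the $\underline p$, $\overline p$ bounds remain uniform under that scaling.
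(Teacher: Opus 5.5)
You follow the same route as the paper: you instantiate Theorem~\ref{thm:reif} by citing the verifications of (A1)--(A4). The gap is in (A1), which you take over exactly as the paper's verification paragraph states it. In that form it does not go through, because the roles of observability and controllability are reversed.

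\textbf{The lower bound.} Measurement information can only decrease the covariance, so observability cannot be the source of a lower bound on $P_{k|k-1}$. From $P_{k|k-1}^{-1}\preceq \mathcal{W}_k(N)/\lambda+\Pi_k$ you need \emph{upper} bounds on $\mathcal{W}_k(N)$ and on $\Pi_k$. The first comes from (A2), via $\mathcal{W}_k(N)\preceq \overline c^{\,2}\sum_{j<N}\overline f^{\,2j}I_9$; the second comes from $Q_d\succ 0$. The bound $\mathcal{W}_k(8)\succeq w_{\min}I_9$ of Lemma~\ref{lem:gramian} contributes nothing here. In fact this half is immediate: $P_{k+1|k}=F_kP_{k|k}F_k^\top+Q_d\succeq\lambda_{\min}(Q_d)I_9$.

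\textbf{The upper bound.} \eqref{eq:upper_riccati} is not uniform in $k$. With $\overline f=7.75>1$, the term $\overline f^{\,2N}\Sigma_{k|k}$ compounds from window to window. Positivity of $Q_d$, or of the controllability Gramian, bounds covariances from below, never from above. The bound $\overline p$ is exactly where uniform observability is needed. For example, dominate $P_{k+N|k+N-1}$ by the error covariance of a suboptimal estimator that reconstructs $x_k$ from $y_k,\dots,y_{k+N-1}$ through $\mathcal{W}_k(N)^{-1}$ and then propagates forward using $\overline f$ and $q_{\max}$. Appealing to the Riccati-bound theorem wholesale would repair the attribution. However, it only applies to a sequence $F_k$ along which $\mathcal{W}_k(8)\succeq w_{\min}I_9$ is known to hold, which leads to the next problem.

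\textbf{The induction.} Your induction in the ``main obstacle'' paragraph does not secure this. It infers $\hat x_{k|k}\in\mathcal X_\varepsilon$ from a bound on $\mathbb E\|\xi_j\|^2$. A mean-square bound says nothing about the realized path, and with Gaussian $w_k,v_k$ the error leaves any bounded set with positive probability at every step. Since $F_k$, and hence the Riccati sequence, is evaluated at the random iterates, (A1) and (A4) must hold pathwise along them. You must either assume this, as Reif et al.\ do for their $P_k$ bound, or prove the Gramian and spectral bounds on every state the iterates can reach.

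\textbf{The constants.} The contraction in Reif's argument comes from the margin $P_{k+1|k}-F_k(I_9-K_kC)P_{k|k-1}(I_9-K_kC)^\top F_k^\top\succeq Q_d$. So $\vartheta$ depends on $\lambda_{\min}(Q_d)$, not only on $\underline p,\overline p,\overline f,\overline c,\underline r$. This filter uses the same $Q_d$ and $R$ as the true noise. Shrinking them to meet $\mathrm{tr}(Q_d),\mathrm{tr}(R)\le\delta^*$, with $P_{0|0}$ fixed, therefore also drives $\lambda_{\min}(Q_d)$ and $\underline p$ toward zero. Contrary to your remark, $\underline p$, $\overline p/\underline p$ and $\vartheta$ do not stay uniform under that scaling. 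Reif's theorem avoids this by keeping the filter's design matrices, which are bounded below, separate from the true noise covariances, which are the ones required to be small.
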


\begin{proof}
Verifications above establish (A1)--(A4) of
Theorem~\ref{thm:reif}: (A2) provides $\overline{f}, \overline{c}$;
(A3) provides $\kappa_\varphi$ and $\epsilon_\varphi = \infty$
through Lemma~\ref{lem:bilinear} and~\eqref{eq:phi_bound};
(A4) provides nonsingularity; (A1) follows from uniform
observability (Lemma~\ref{lem:gramian},
Remark~\ref{rem:uniform_rank}) for the lower bound and from
uniform controllability of $(F_k, Q_d^{1/2})$ with $Q_d \succ 0$
for the upper bound, via the Riccati uniform-bound
result of~\cite[Theorem 7.4]{anderson1979optimal}. Application
of Theorem~\ref{thm:reif} yields~\eqref{eq:thm2_bound}.
\end{proof}

\begin{remark}[Interpretation]
\label{rem:interpretation}
The bound~\eqref{eq:thm2_bound} contains two terms.
The first decays geometrically with rate $\vartheta < 1$ and
captures the influence of the initialization error
$\|\xi_0\|$. The second, $\nu/(1-\vartheta)$, is the steady-state
floor due to process and measurement noise. The factor
$\overline{p}/\underline{p}$ is the condition number of the
admissible covariance window and measures the price paid for
the local (rather than global) nature of the result.
Importantly, the global quadratic remainder bound
$\epsilon_\varphi = \infty$ of~\eqref{eq:phi_bound} means that
the only restriction on $\|\xi_0\|$ comes from the Riccati
uniform-bound condition, not from the Taylor expansion of $f$.
\end{remark}

\begin{remark}[On the role of the bound constants]
\label{rem:basin}
Theorem~\ref{thm:convergence} is a local result: it guarantees
the existence of constants $\vartheta,\nu,\epsilon^*,\delta^*$
without asserting their numerical values, which are
trajectory-dependent. Two structural observations are worth
recording. First, the prefactor $\overline{p}/\underline{p}$
in~\eqref{eq:thm2_bound} reflects the conditioning of the prior
covariance; because the recovered-pool direction $R$ is only
weakly observable (Remark~\ref{rem:weak_obs}), the posterior
variance stays comparatively large along that direction, so this
prefactor is expected to be sizeable. Crucially, the prefactor
multiplies only the transient term $\vartheta^k$ and does
\emph{not} enter the steady-state floor $\nu/(1-\vartheta)$;
a large value therefore lengthens the worst-case transient bound
without degrading asymptotic accuracy. This is precisely the
qualitative behaviour observed in Section~\ref{sec:results}: the
$R$ compartment exhibits the slowest transient
(Figure~\ref{fig:convergence}) while still attaining a small
post-convergence RMSE (Table~\ref{tab:ekf_metrics}). Second, the
remainder prefactor $\kappa_\varphi$ of~\eqref{eq:phi_bound} is
available in closed form and is the only constant in the analysis
that does not require trajectory information. We do not attempt a
certified estimate of the contraction rate $\vartheta$ or of the
admissible-error radius $\epsilon^*$; the constants supplied by
the general theorem of~\cite{reif1999stochastic} are known to be
conservative, and a tight basin characterization for this
specific system is left as a direction for future work. The
empirical convergence from $\pm 20\%$ initialization reported in
Section~\ref{sec:results} is therefore presented as a numerical
finding consistent with Theorem~\ref{thm:convergence}, not as a
quantitative instantiation of its constants.
\end{remark}

\begin{remark}[Why the convergence theorem matters for the
epidemic application]
\label{rem:epi_value}
Bound~\eqref{eq:thm2_bound} translates directly into operational
guarantees for the epidemic-control architecture. The geometric
decay rate $\vartheta$ predicts the convergence time of the
filter from any initial guess satisfying the smallness condition,
giving a quantitative bound on how quickly the controller will
have a reliable state to feed back. The steady-state floor
$\nu/(1-\vartheta)$ provides a worst-case characterization of the
estimation error that any downstream Model Predictive Control
layer must tolerate, allowing for principled robustness margins.
\end{remark}

%%=============================================================
\subsection{Covariance Matrix Selection}
\label{sec:cov_cal}
%%=============================================================

We now justify and validate the choice of the measurement-noise
covariance $R$ and the process-noise covariance $Q_d$. These
are the two design parameters that translate the theoretical
convergence guarantees of Section~\ref{sec:convergence} into a
working filter, and their selection is therefore the operational
counterpart of the theoretical analysis.

\subsubsection{Measurement Noise Covariance $R$: Statistical Foundation}
\label{sec:R}

The measurement-noise covariance $R$ models the deviation of the
reported observables from the true epidemic state. Three sources
of noise affect each observable: \emph{reporting delays}
(especially weekend effects on $H$ and $F$),
\emph{misclassification} (deaths attributable to vs.\ with
COVID-19), and \emph{administrative aggregation lag} for $V$.

The companion calibration study~\cite{melhani2026arxiv}
quantified the model--data mismatch through the in-sample residual
$r^{(j)}_k = y^{(j)}_{\mathrm{obs},k} - y^{(j)}_{\mathrm{sim},k}$
for each observable $j \in \{H,F,V\}$, reporting calibration
RMSE values of $1119$~persons for $H$, $1751$~deaths for $F$, and
$200{,}881$~doses for $V$ (Table~2 of~\cite{melhani2026arxiv}).
These calibration residuals conflate two distinct error sources:
genuine measurement noise and structural model--data mismatch in
the fit. For the EKF we require the former, so we use the
calibration RMSE only as a guide and set the measurement-noise
standard deviations as
\begin{equation}
\sigma_H = 1146.4 \;\text{persons},
\qquad
\sigma_F = 1652.9 \;\text{persons},
\qquad
\sigma_V = 5000 \;\text{doses},
\label{eq:sigma_residuals}
\end{equation}
yielding
\begin{equation}
R = \mathrm{diag}\bigl(\sigma_H^2,\,\sigma_F^2,\,\sigma_V^2\bigr)
  = \mathrm{diag}(1146.4^2,\,1652.9^2,\,5000^2).
\label{eq:Rcal}
\end{equation}
For $H$ and $F$, the chosen $\sigma$ values sit within a few
percent of the companion calibration RMSE ($1146.4$ vs $1119$;
$1652.9$ vs $1751$), so the hospitalization and fatality channels
inherit a noise level essentially equal to the calibration
residual. For $V$, by contrast, we deliberately set $\sigma_V$
far below the calibration RMSE of $200{,}881$~doses. The reason
is that the vaccination stock is drawn from centralized
administrative dose records, whose day-to-day \emph{measurement}
error is small; the large calibration residual for $V$ reflects
structural mismatch between the smooth spline-driven model and
the batch-processed reporting of doses, not measurement noise.
Using the full calibration residual as $\sigma_V$ would therefore
overstate the true measurement uncertainty of the most accurately
recorded observable. We make this modelling choice explicit
because it has a direct, observable consequence: as reported in
Section~\ref{sec:results_validation}, the vaccination channel
becomes strongly over-covaried ($\bar\nu_V \ll 1$), a transparency
cost we accept in exchange for not injecting structural-mismatch
variance into a channel that is, in measurement terms, nearly
exact.
The diagonal structure of $R$ reflects the institutional
separation between hospitalization surveillance, death
registration, and vaccination tracking, which makes
off-diagonal correlations small relative to the diagonal
scale.\footnote{An off-diagonal correlation between $H$ and $F$
of magnitude up to $\pm 0.2$ would not affect the analysis
qualitatively, but the empirical residual cross-correlations
computed from the companion calibration are below $0.05$, so
the diagonal approximation is statistically well-justified.}

The implied relative noise magnitudes
$\sigma_j/\bar{y}^{(j)}$ are approximately $5.0\%$ for $H$,
$1.5\%$ for $F$, and $0.07\%$ for $V$, reflecting the increasing
accuracy of administrative reporting from hospitalizations
(short delays and admission-criterion variability) to deaths
(post-hoc certification) to vaccinations (centralized
administrative records).

\paragraph{Limitations of the white-noise assumption.}
The covariance $R$ in~\eqref{eq:Rcal} treats the measurement
residuals as independent, additive, Gaussian, and \emph{white}
across time. The posterior Anderson autocorrelation test in
Section~\ref{sec:results} shows that this assumption does
\emph{not} strictly hold: the vaccination channel in particular
retains significant residual autocorrelation. The underlying
administrative data are well known to contain
structural artifacts that violate strict whiteness: weekend
reporting cycles in both $H$ and $F$ (causing $7$-day periodic
spikes), post-hoc death attribution lags of several days that
correlate $F$ across consecutive reporting windows, and batch
processing of vaccination records that can produce
multi-day plateaus in $V$. The $7$-day rolling mean applied by
the companion calibration~\cite{melhani2026arxiv} attenuates
these effects but does not eliminate them.

A more rigorous treatment would either (i) augment the state
with explicit reporting-delay tracking variables and let the
Kalman recursion estimate them jointly with the epidemic
states, or (ii) replace the white-noise model in $R$ with a
shaped (coloured) noise representation, propagated through the
Bryson--Henrikson augmented-state Kalman
filter~\cite{bryson1968colored} or its modern equivalents.
We tested representative versions of both directions
(Section~\ref{sec:results}): higher-order integration, a
first-order Gauss--Markov colored-noise augmentation, and
augmentation of the time-varying inputs as estimated states.
None fully whitened the innovations, indicating that the residual
correlation is intrinsic to the daily-sampled, interpolated-input
formulation rather than a tuning deficiency; a full
continuous-discrete formulation with online input estimation is
the natural direction for follow-up work targeting real-time
deployment scenarios where
$7$-day smoothing is undesirable due to the latency it introduces.

\subsubsection{Process Noise Covariance $Q$: Coupling-Weighted Design}
\label{sec:Q}

The process-noise covariance accounts for unmodeled dynamics:
biological heterogeneity, demographic stochasticity not captured
by the deterministic spline calibration, and residual error in
the identified inputs $\beta(t), w_1(t)$. The structural form
\begin{equation}
(Q_d)_{ii} = (\epsilon_i\,\bar{x}_i)^2,
\quad i = 1,\ldots,9,
\label{eq:Q_structure}
\end{equation}
encodes the principle that uncertainty scales with the
magnitude of each compartment, which is standard for
population-dynamic systems and amounts to a multiplicative-noise
hypothesis~\cite{simon2006optimal}. Since every
$\bar{x}_i > 0$ along the calibrated trajectory and every
$\epsilon_i > 0$, $Q_d$ is diagonal with strictly positive
entries, hence $Q_d \succ 0$; this is precisely the property
invoked for the upper Riccati bound in the verification of (A1).

The fractional uncertainties $\epsilon_i$ are determined by a
\emph{coupling-strength heuristic}. For each compartment $i$,
the coupling strength to the measured output is defined as the
sensitivity coefficient governing how rapidly an error in
$\hat{x}^{(i)}$ propagates into an observable residual. From the
analytical Jacobian~\eqref{eq:Jf}, the strongest direct couplings
are
\begin{equation}
\partial H / \partial I_k \sim \gamma_a,
\qquad
\partial F / \partial H \sim \delta_h,
\qquad
\partial V / \partial V \sim -(\mu+\psi),
\end{equation}
all in the range $10^{-2}$--$10^{-1}$~day$^{-1}$, while
$\partial H/\partial E$ vanishes at first order and only
appears at the second Lie derivative through
$\partial L_f^2 h_1 / \partial E = \gamma_a\kappa$.

Compartments with \emph{direct} measurement links ($V, H, F$)
receive small $\epsilon_i$ (in the range $0.3\%$--$0.5\%$),
because the filter can correct them rapidly through the Kalman
gain. Compartments with \emph{indirect} links via Lie-level-2
pathways receive larger $\epsilon_i$:
\begin{equation}
\begin{aligned}
[\epsilon_S,\, \epsilon_E,\, \epsilon_V,\, \epsilon_{I_1},\,
 \epsilon_{I_2},\, \epsilon_P,\, \epsilon_H,\, \epsilon_R,\,
 \epsilon_F] = \\
[0.8\%,\, 0.3\%,\, 0.5\%,\, 0.3\%,\, 0.3\%,\,
   0.3\%,\, 0.3\%,\, 1.5\%,\, 0.3\%].
\end{aligned}
\label{eq:eps_Q}
\end{equation}
The values $\epsilon_S = 0.8\%$ and $\epsilon_R = 1.5\%$ are the
two largest: $S$ is observed only through the third row of
$\mathcal{O}_1$ via the small entry $w_1$, and $R$ only through
the third row of $\mathcal{O}_2$ via the still smaller entry
$w_1\delta_w$. The hierarchy
$\epsilon_R > \epsilon_S > \epsilon_V \approx \epsilon_E
\approx \epsilon_H$ thus mirrors the hierarchy of nonzero
entries in the observability matrix.

\subsubsection{Posterior Validation: Innovation Whiteness and
NEES Consistency}
\label{sec:validation}

The covariance design described above is corroborated a
posteriori by two standard
statistical tests~\cite{barshalom2001estimation,simon2006optimal}.

\paragraph{Test 1 -- Innovation whiteness.}
If $Q$ and $R$ are chosen consistently with the actual noise
processes, the innovation sequence
$\{\tilde{y}_k\}_{k\geq 1}$ should be white. We test this by
the Anderson autocorrelation test on each component:
\begin{equation}
\hat{\rho}_j(\tau)
\;=\;
\frac{\sum_{k=1}^{K-\tau} \tilde{y}_k^{(j)}\tilde{y}_{k+\tau}^{(j)}}
     {\sum_{k=1}^{K} (\tilde{y}_k^{(j)})^2},
\qquad
j \in \{H,F,V\}, \quad \tau = 1,\ldots,30.
\end{equation}
For a white sequence, $\hat{\rho}_j(\tau)$ lies within
$\pm 1.96/\sqrt{K}$ at the $5\%$ significance level.
Section~\ref{sec:results_validation} reports the empirical
$\hat{\rho}_j(\tau)$ values.

\paragraph{Test 2 -- NEES consistency.}
The normalized innovation squared at each step is
\begin{equation}
\epsilon^{\mathrm{NEES}}_k
\;=\;
\tilde{y}_k^\top S_k^{-1} \tilde{y}_k,
\end{equation}
which, if the filter is consistent, follows a $\chi^2$
distribution with $p = 3$ degrees of freedom. The
time-averaged
$\bar{\epsilon}^{\mathrm{NEES}}
= (1/K)\sum_k \epsilon^{\mathrm{NEES}}_k$
should lie inside the two-sided $95\%$ confidence interval
$[\,p\,\chi^2_{0.025,Kp}/(Kp),\; p\,\chi^2_{0.975,Kp}/(Kp)\,]$
for $K$ samples of $\chi^2_p$. For $K = 130$ post-convergence
samples and $p = 3$ (so $Kp = 390$), this interval is
approximately $[2.59,\, 3.44]$.

\subsubsection{Sensitivity to Tuning and Q Robustness}

\paragraph{Individual perturbation.}
The sensitivity of the filter to mis-tuning of a single
fractional uncertainty $\epsilon_i$ in~\eqref{eq:eps_Q}, holding
the others fixed, is governed by the continuity of $P_{k|k-1}$
in $Q_d$ established below: because each $\epsilon_i$ enters
$Q_d$ smoothly through~\eqref{eq:Q_structure} and $P_{k|k-1}$ is
uniformly bounded (Lemma~\ref{lem:gramian}), the
post-convergence RMSE and the NEES vary continuously with
$\epsilon_i$, so moderate individual mis-tuning produces only a
correspondingly bounded change in filter performance. The
analytical version of this statement is made precise next.

\paragraph{Joint robustness: analytical argument.}
To assess the sensitivity to simultaneous mis-tuning of all
$\epsilon_i$, observe that the NEES statistic is
\begin{equation}
\bar{\epsilon}^{\mathrm{NEES}}
= \frac{1}{K}\sum_k \tilde{y}_k^\top (CP_{k|k-1}C^\top+R)^{-1}\tilde{y}_k.
\end{equation}
Under the uniform Riccati bounds of Lemma~\ref{lem:gramian},
$P_{k|k-1}$ depends continuously on $Q_d = TQ$, which in turn
depends continuously on the $\epsilon_i$ through~\eqref{eq:Q_structure}.
Hence $\bar{\epsilon}^{\mathrm{NEES}}$ is a continuous function
of the vector $\boldsymbol\epsilon := (\epsilon_1,\ldots,\epsilon_9)$.
By the compactness of the feasible region and the strict
positivity of $P_{k|k-1}$, $\bar{\epsilon}^{\mathrm{NEES}}$ varies
continuously with $\boldsymbol\epsilon$, so if the nominal tuning
yields a NEES value in the interior of the consistency band there
exists an open neighbourhood of the nominal $\boldsymbol\epsilon$
on which the value remains within the band. The neighbourhood is
bounded below in size by the continuity modulus of $P_{k|k-1}$
with respect to $Q_d$, which is controlled by $w_{\min}$
(Lemma~\ref{lem:gramian}). Quantitatively, a factor-of-$c$
scaling of all $\epsilon_i$ simultaneously scales $Q_d$ by
$c^2$, shifting $P_{k|k-1}$ and hence the NEES by a factor
controlled by $c^2\,\overline{p}/\underline{p}$ relative to the
nominal; the filter consistency therefore degrades gracefully
rather than abruptly under simultaneous mis-tuning.

\paragraph{Remark on systematic Q identification.}
A more principled route to $Q$ is the
expectation-maximization (EM) algorithm of
Shumway and Stoffer~\cite{shumway1982approach}, which
iterates between Kalman smoothing and maximum-likelihood
re-estimation of $Q$ and $R$ from the innovation statistics.
This would remove the need for the coupling-strength heuristic
and provide asymptotically efficient estimates; it is a
natural refinement for future work.

\subsection{Initialization}
\label{sec:init}

The filter is initialized with
$P_{0|0} = \mathrm{diag}\bigl((0.2\,\hat{x}_{0|0}^{(i)})^2\bigr)$,
encoding $\pm 20\%$ uncertainty at one standard deviation, for
the eight compartments that are reconstructable from the output
set. The recovered compartment $R$ is treated differently. As
established in Lemma~\ref{lem:detO9} and
Remark~\ref{rem:weak_obs}, $R$ reaches the outputs only through
the waning coupling $w_1\delta_w \approx 5\times 10^{-6}$, so the
measurement update cannot appreciably correct an initial error in
$R$ from $(H,F,V)$ over the observation window. Because the
recovered population at the start of the window is independently
available from the cumulative recovered count
(\emph{guariti}) reported in the Italian Civil Protection
dataset, we initialize $R$ from that figure
($R_0 = 1{,}479{,}988$ on 1~January~2021) with a tight
uncertainty ($\pm 2\%$), rather than the $\pm 20\%$
structural-ignorance perturbation applied to the other eight
states. We note honestly that the reported cumulative-recovered
count approximates the standing recovered pool: it undercounts
unrecorded (e.g.\ home) recoveries and does not subtract waned
immunity, two biases acting in opposite directions and not
separately identifiable; it is nonetheless a data-grounded
initial value rather than a free parameter. The slowly-varying
$R$ dynamics then carry the estimate forward. We are explicit
that the strong $R$ accuracy reported below follows from this
accurate initialization of a weakly-observable state, not from
measurement-driven reconstruction of $R$. A practitioner caveat
follows from this: because $R$ couples back into the susceptible
pool through the waning term $\delta_w R$ in $\dot S$, an
uncorrected bias in the propagated $R$ can, over horizons much
longer than the $150$-day window studied here, induce a slow
open-loop drift in $S$ and hence in $E$. The timescale of this
drift is set by the waning rate: a fractional bias $b$ in $R$
injects a susceptible-side source of order $\delta_w\,b\,\bar R$
per day, so the accumulated relative error in $S$ after $t$ days
is of order $\delta_w\,b\,\bar R\,t/\bar S$. With the
$\pm2\%$ initialization bias ($b = 0.02$),
$\delta_w = 10^{-3}$~day$^{-1}$, and the calibrated magnitudes
$\bar R \approx 1.7\times10^6$, $\bar S \approx 5\times10^7$, this
accumulated error is only $\approx 0.01\%$ of $S$ over the
$150$-day window and remains below $0.1\%$ even after three years.
The $150$-day window studied here is thus deep within the regime
where open-loop propagation of $R$ is entirely safe; drift becomes
a percent-level concern only on decade timescales for this bias
level, or correspondingly sooner if the initialization bias is
larger. For long-horizon deployment we nonetheless recommend an
explicit \emph{impulsive re-anchoring} strategy as a safeguard: at
fixed intervals $T_R$ (e.g.\ annually), the propagated $\hat R$ is
reset to the cumulative-recovered surveillance figure with its
associated $\pm2\%$ uncertainty, exactly as at initialization
(Section~\ref{sec:init}), and the covariance entry $P_{RR}$ is
correspondingly reset. This bounds the open-loop drift to whatever
accumulates within a single $T_R$ interval while leaving the
measurement-driven channels untouched, and is the natural
mechanism by which a closed-loop control architecture would keep
the weakly observable pool anchored over multi-year operation.

%%=============================================================
\section{Results}
\label{sec:results}
%%=============================================================

The EKF developed in Section~\ref{sec:ekf} is implemented for the
nine-compartment model with parameters calibrated on Italian
surveillance data of the COVID-19 Third Wave (January to May
2021)~\cite{melhani2026arxiv}. The time-varying transmission
$\beta(t)$ and vaccination rate $w_1(t)$ are taken from the PCHIP-SQP
spline calibration of~\cite{melhani2026arxiv}; constant parameters
are in Table~\ref{tab:params}. The MATLAB code that
produces the window-length Gramian sweep, the neighbourhood
Gramian evaluation, and the spectral-radius check referenced
throughout Sections~\ref{sec:observability}--\ref{sec:ekf} is
available from the authors on request.

\begin{table}[htbp]
\centering
\caption{Model Parameters Used in the EKF Implementation}
\label{tab:params}
\begin{tabular}{lllr}
\toprule
\textbf{Parameter} & \textbf{Description} & \textbf{Units} & \textbf{Value} \\
\midrule
$N$          & Total population                               & persons      & $60{,}000{,}000$ \\
$\kappa$     & Incubation rate                                 & day$^{-1}$   & $0.200$ \\
$\gamma_a$   & Hospitalization rate (infectious)               & day$^{-1}$   & $0.200$ \\
$\gamma_i$   & Recovery rate (infectious, non-hosp.)           & day$^{-1}$   & $0.100$ \\
$\gamma_r$   & Recovery rate (hospitalized)                    & day$^{-1}$   & $0.100$ \\
$\rho_1$     & Branching fraction: exposed $\to I_1$           & --           & $0.580$ \\
$\rho_2$     & Branching fraction: exposed $\to P$             & --           & $0.001$ \\
$\delta_i$   & Disease-induced mortality (infectious)          & day$^{-1}$   & $0.005$ \\
$\delta_p$   & Disease-induced mortality (super-spreaders)     & day$^{-1}$   & $0.005$ \\
$\delta_h$   & Disease-induced mortality (hospitalized)        & day$^{-1}$   & $0.01262$ \\
$\mu$        & Natural mortality rate                          & day$^{-1}$   & $3.535\times10^{-5}$ \\
$m$          & Mutation/transition rate $I_1 \to I_2$          & day$^{-1}$   & $0.005$ \\
$r_1$        & Treatment recovery rate ($I_1$)                 & day$^{-1}$   & $0.050$ \\
$r_2$        & Treatment recovery rate ($I_2$)                 & day$^{-1}$   & $0.050$ \\
$\sigma$     & Vaccine efficacy (breakthrough reduction)       & --           & $0.800$ \\
$\psi$       & Waning immunity rate (vaccinated)               & day$^{-1}$   & $0.002$ \\
$\delta_w$   & Waning immunity rate (recovered)                & day$^{-1}$   & $0.001$ \\
$\Lambda$    & Birth / immigration rate                        & persons/day  & $100$ \\
$\beta(t)$   & Time-varying transmission rate                  & day$^{-1}$   & PCHIP spline~\cite{melhani2026arxiv} \\
$w_1(t)$     & Time-varying vaccination rate                   & day$^{-1}$   & PCHIP spline~\cite{melhani2026arxiv} \\
\bottomrule
\end{tabular}
\end{table}

\subsection{Experimental Setup}

The reference trajectory integrates system~\eqref{eq:fvec}
forward from the calibrated initial
conditions of~\cite{melhani2026arxiv} over the window
January~1--May~30, 2021 ($k = 0,1,\ldots,149$; $T = 1$~day) with
the identified $\beta(t), w_1(t)$. Integration uses MATLAB's
\texttt{ode45} with relative and absolute tolerances of
$10^{-7}$; the random seed is fixed (\texttt{rng(42)}) for
reproducibility. Synthetic measurements are generated by adding
Gaussian noise of standard deviations~\eqref{eq:sigma_residuals}
to the reference outputs $H, F, V$.

\subsection{Convergence and Estimation Accuracy}

Each initial state estimate is perturbed as
$\hat{x}_{0|0}^{(i)} = c_i\,x_0^{(i)}$ with
$c_i \sim \mathcal{U}[0.80, 1.20]$, representing up to
$\pm 20\%$ error on each compartment.
Figure~\ref{fig:convergence} shows the estimated and reference
trajectories for all nine compartments.

\begin{figure*}[h]
  \centering
  \begin{subfigure}{0.3\textwidth}
    \includegraphics[width=\linewidth]{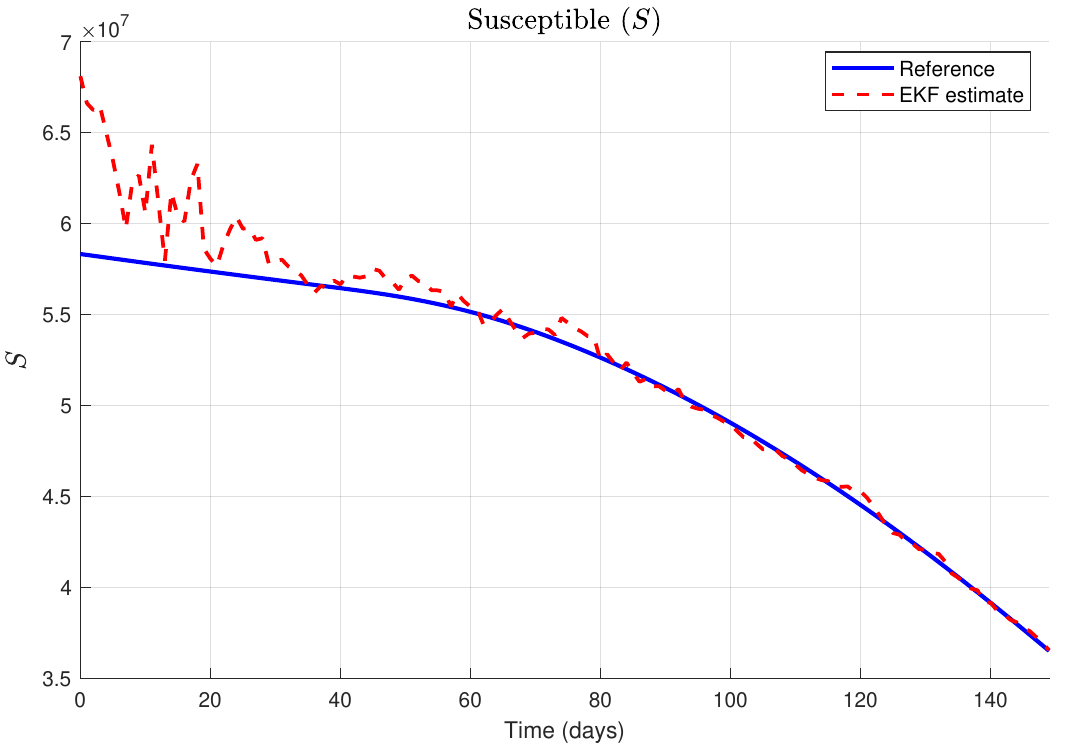}
    \caption{Susceptible ($S$)}
  \end{subfigure}
  \hfill
  \begin{subfigure}{0.3\textwidth}
    \includegraphics[width=\linewidth]{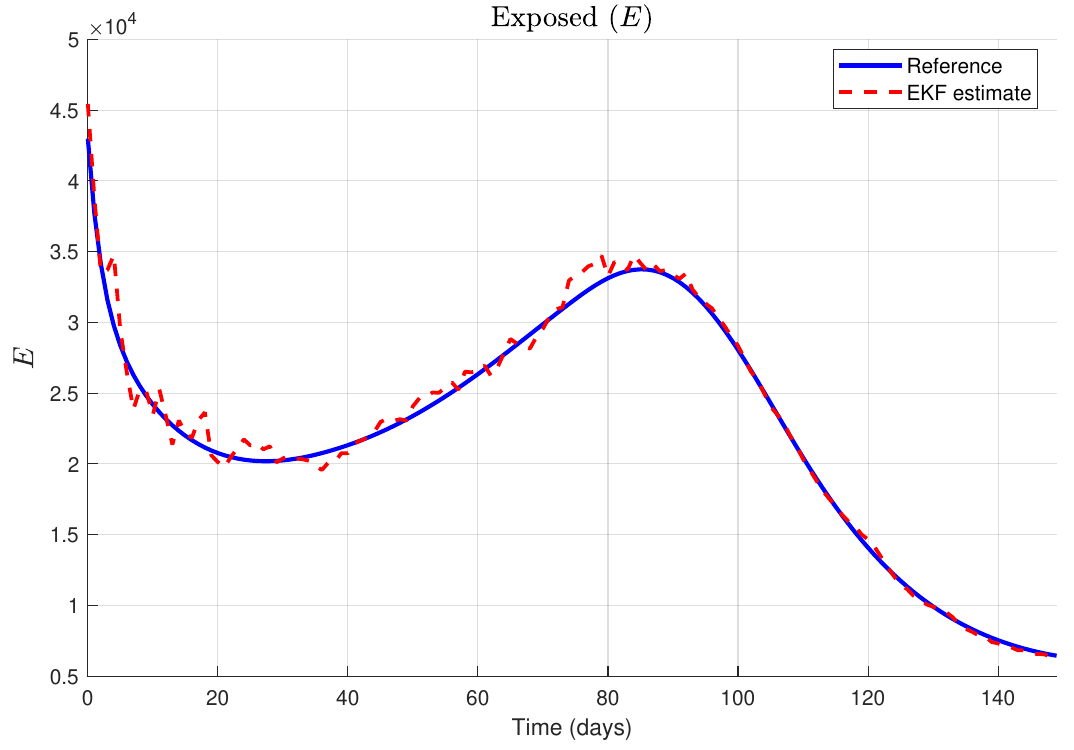}
    \caption{Exposed ($E$)}
  \end{subfigure}
  \hfill
  \begin{subfigure}{0.3\textwidth}
    \includegraphics[width=\linewidth]{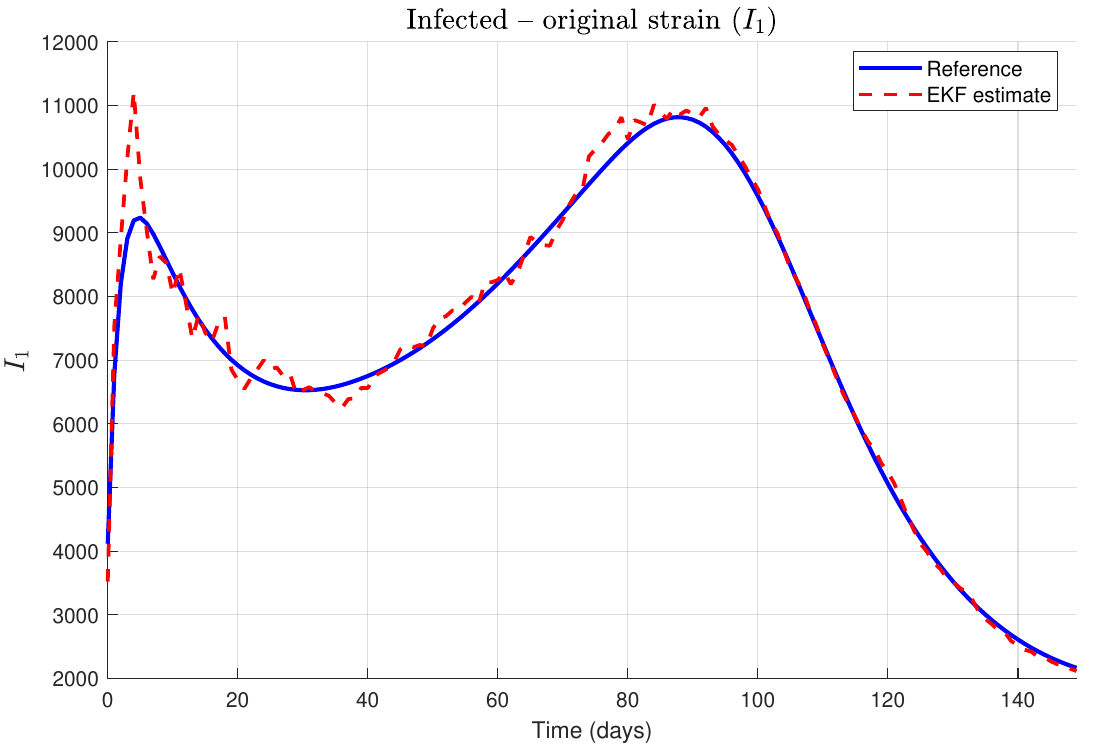}
    \caption{Infected -- original strain ($I_1$)}
  \end{subfigure}
  \vspace{1em}
  \begin{subfigure}{0.3\textwidth}
    \includegraphics[width=\linewidth]{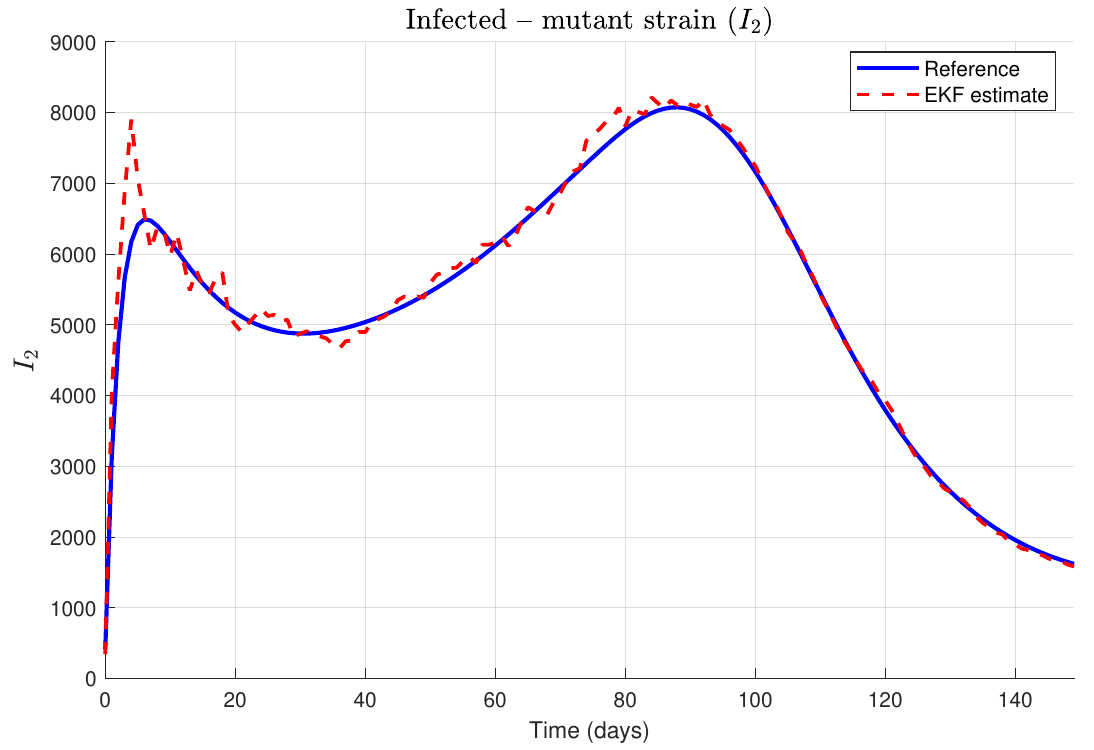}
    \caption{Infected -- mutant strain ($I_2$)}
  \end{subfigure}
  \hfill
  \begin{subfigure}{0.3\textwidth}
    \includegraphics[width=\linewidth]{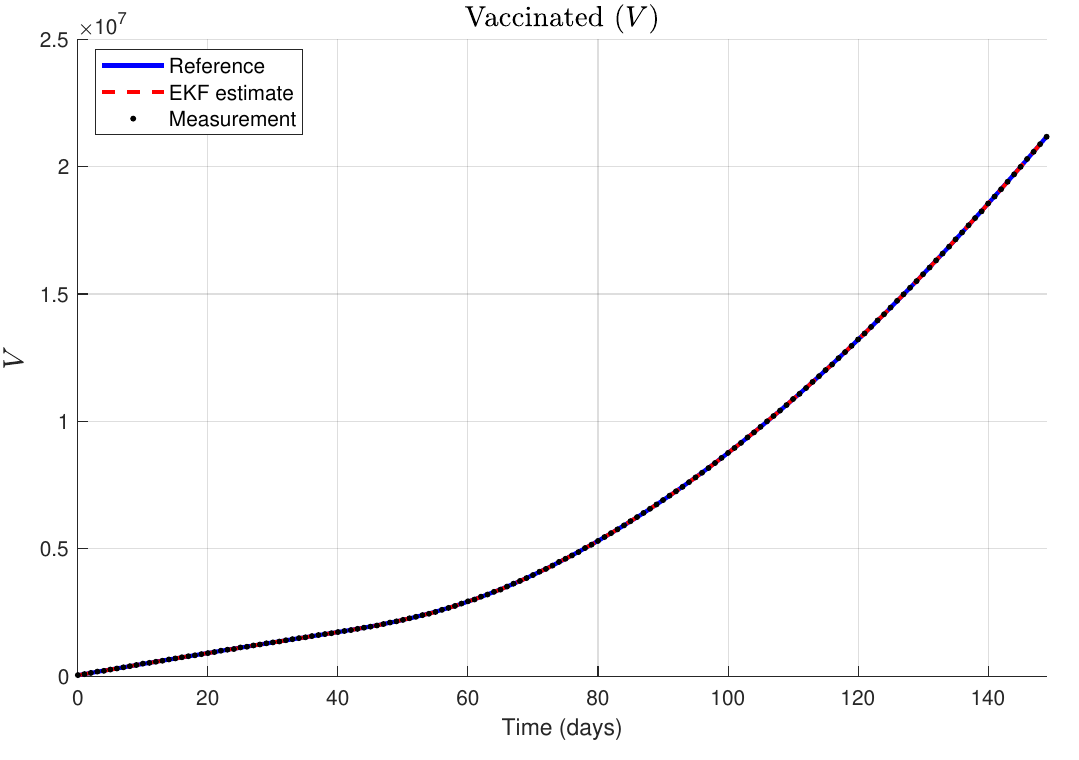}
    \caption{Vaccinated ($V$)}
  \end{subfigure}
  \hfill
  \begin{subfigure}{0.3\textwidth}
    \includegraphics[width=\linewidth]{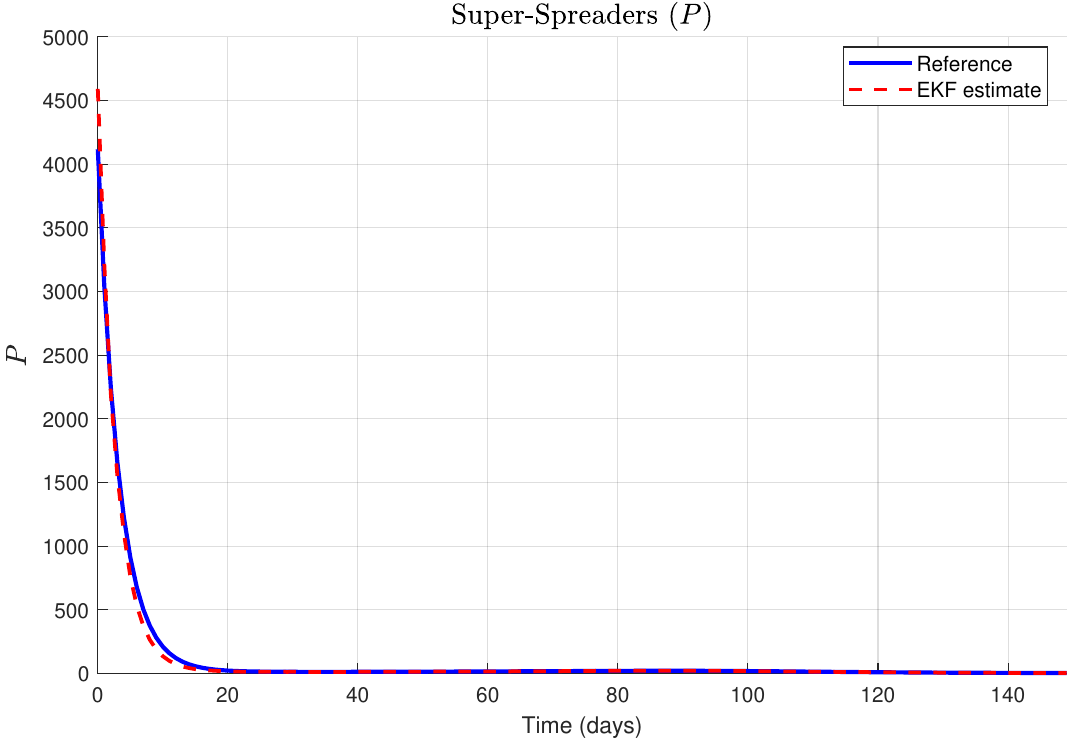}
    \caption{Super-Spreaders ($P$)}
  \end{subfigure}
  \vspace{1em}
  \begin{subfigure}{0.3\textwidth}
    \includegraphics[width=\linewidth]{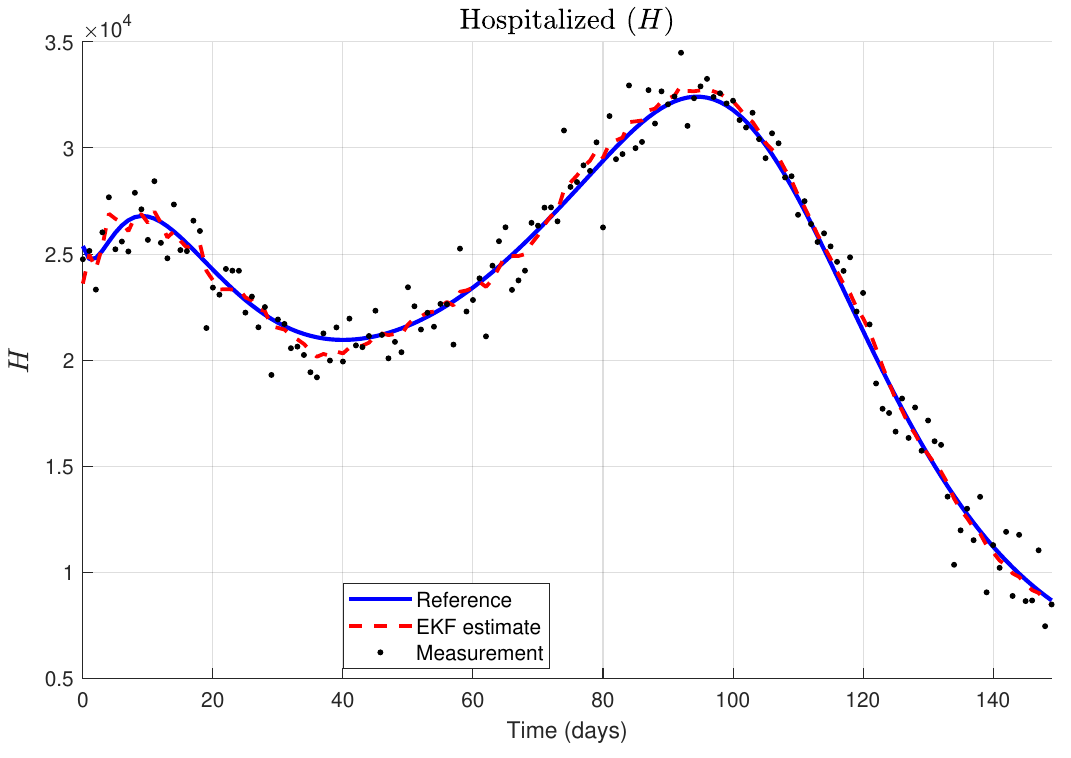}
    \caption{Hospitalized ($H$)}
  \end{subfigure}
  \hfill
  \begin{subfigure}{0.3\textwidth}
    \includegraphics[width=\linewidth]{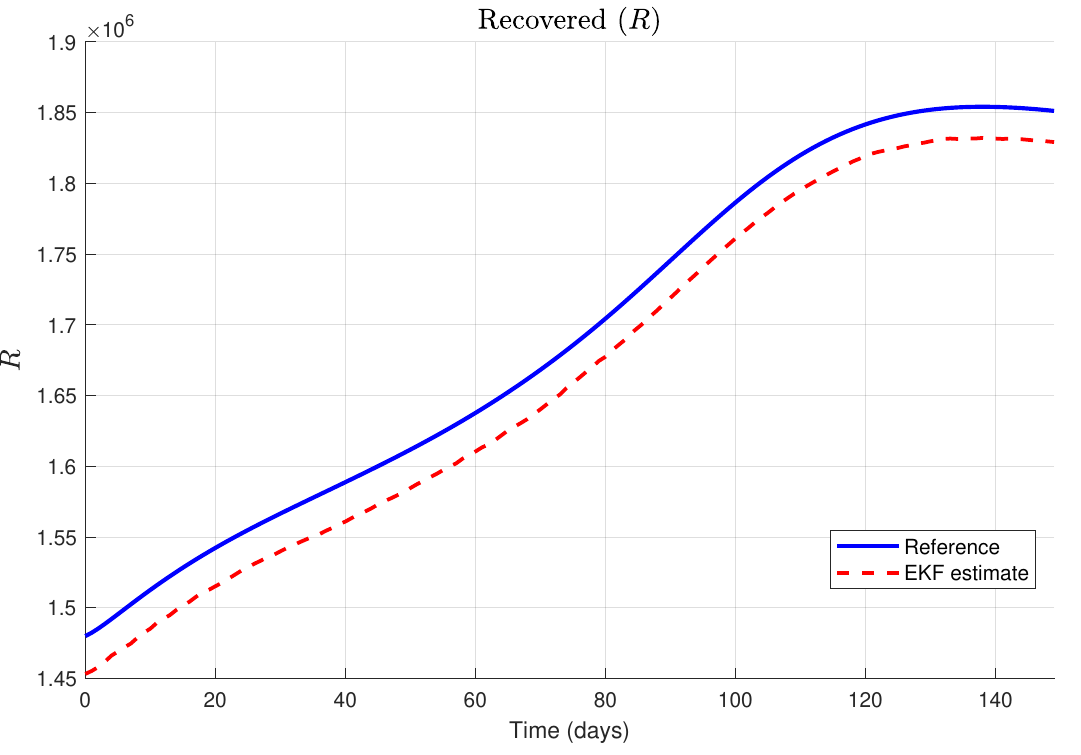}
    \caption{Recovered ($R$)}
  \end{subfigure}
  \hfill
  \begin{subfigure}{0.3\textwidth}
    \includegraphics[width=\linewidth]{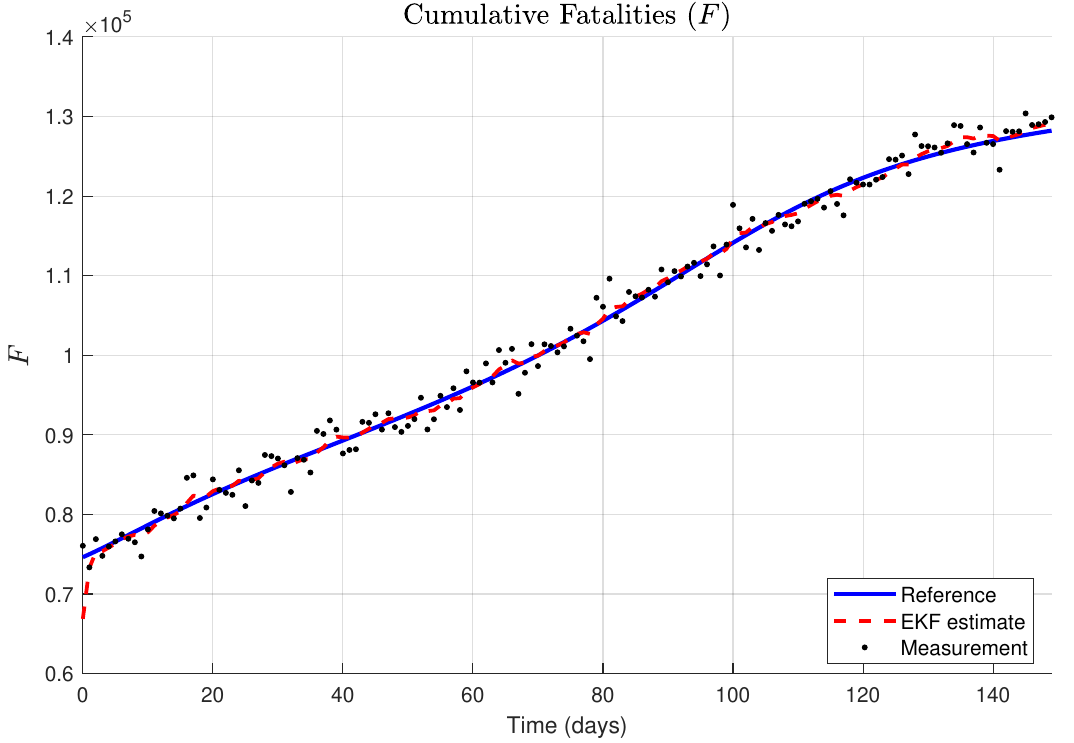}
    \caption{Cumulative fatalities ($F$)}
  \end{subfigure}
  \caption{EKF state estimation results for all nine compartments
  ($k = 0,\ldots,149$, $T = 1$~day).
  Solid blue: reference. Dashed red: EKF estimates ($\pm 20\%$
  initialization). Black dots: synthetic noisy measurements for
  $H, F, V$.}
  \label{fig:convergence}
\end{figure*}

The directly observed compartments $H, F, V$ converge within a
few days (in the reported run, $F$ within $1$~day, $V$ within
$4$, and $H$ within $5$). The susceptible compartment $S$
converges in about $25$~days, and the infectious compartments
$I_1, I_2, P$ within about $37$~days, governed by the chain
$E \to I_1 \to H$ and the observability coupling pathways. The
exposed compartment $E$ is the slowest of the reconstructed
states, reaching the $5\%$ threshold in about $80$~days,
reflecting its indirect two-step coupling $E \to I_1 \to H$ to the
measured outputs. The recovered compartment $R$ is \emph{not}
reconstructed from the outputs: it is weakly observable through
the waning coupling
$\partial L_f^3 V/\partial R \approx 5\times 10^{-6}$~day$^{-3}$
(Remark~\ref{rem:weak_obs}), so it is initialized from cumulative
surveillance data (Section~\ref{sec:init}) and carried forward
along its slowly-varying dynamics; its reported error reflects the
residual initialization error rather than measurement-driven
convergence.
These transient durations are consistent
with the geometric-decay term of bound~\eqref{eq:thm2_bound}
in Theorem~\ref{thm:convergence}: as discussed in
Remark~\ref{rem:basin}, the theorem accounts for the mechanism
and qualitative form of the decay, while the specific convergence
from $\pm 20\%$ initialization is an empirical finding consistent
with it rather than a quantitative instantiation of the theorem's
constants.

\begin{table}[htbp]
\centering
\caption{EKF Post-Convergence Estimation Accuracy (Days 20--149)}
\label{tab:ekf_metrics}
\begin{tabular}{llccc}
\toprule
\textbf{State} & \textbf{Description}
  & \textbf{RMSE} & \textbf{Rel.\ RMSE} & \textbf{Units} \\
\midrule
$S$   & Susceptible           & $755{,}800$ & $1.51\%$ & persons \\
$E$   & Exposed               & $590$       & $2.72\%$ & persons \\
$V$   & Vaccinated            & $5{,}286$   & $0.07\%$ & persons \\
$I_1$ & Infected, original    & $163$       & $2.29\%$ & persons \\
$I_2$ & Infected, mutant      & $121$       & $2.28\%$ & persons \\
$P$   & Super-spreaders       & $1$         & $6.10\%$ & persons \\
$H$   & Hospitalized          & $342$       & $1.48\%$ & persons \\
$R$   & Recovered$^{\dagger}$ & $25{,}576$  & $1.49\%$ & persons \\
$F$   & Cumulative fatalities & $555$       & $0.52\%$ & deaths  \\
\bottomrule
\end{tabular}

\vspace{2pt}
{\footnotesize $^{\dagger}$ $R$ is weakly observable from
$(H,F,V)$ (Lemma~\ref{lem:detO9}) and is \emph{not} corrected by the
measurements; it is initialized from cumulative surveillance data
(Section~\ref{sec:init}) and propagated. Its reported error
reflects the residual initialization error carried forward, not
measurement-driven reconstruction, and would scale with the
initialization error rather than decay.}
\end{table}

Table~\ref{tab:ekf_metrics} reports absolute and relative RMSE
from day~20 to day~149. Results fall into three tiers. Below
$1\%$: $V$ ($0.07\%$) and $F$ ($0.52\%$) -- states with direct
or integral measurement links. Between $1\%$ and $2\%$:
$H$ ($1.48\%$), $R$ ($1.49\%$), and $S$ ($1.51\%$). Between
$2\%$ and $3\%$: $I_2$ ($2.28\%$), $I_1$ ($2.29\%$), and
$E$ ($2.72\%$) -- estimated entirely through the observability
coupling pathways of Section~\ref{sec:observability}.

The super-spreader compartment $P$ achieves an absolute RMSE of
$1$ person on a true mean of $\approx 14$ persons ($\rho_2 = 0.001$).
The relative figure of $6.10\%$ is misleadingly large; Poisson
demographic stochasticity alone gives relative counting noise of
$1/\sqrt{14} \approx 27\%$, so the EKF result lies well below
the stochastic floor.

The accuracy of $E, I_1, I_2$ is governed by the coupling
coefficient $\gamma_a\kappa\rho_1 \approx 0.023$: a change of
one person in $E$ produces only $0.023$ persons per day in $H$.
This weak coupling fundamentally limits the extractable
information~\cite{jazwinski1970stochastic}; the achieved $2.72\%$
for $E$ represents near-maximal extraction.

\subsection{Internal Consistency Checks: NEES and Innovation
Whiteness}
\label{sec:results_validation}

In the context of synthetic validation (Section~\ref{sec:val_limitations}),
the NEES and whiteness tests serve a specific and limited purpose:
they are \emph{internal consistency checks}, not real-world
validation of $Q$ and $R$. Concretely, they verify three things:
(1) the EKF implementation is numerically correct;
(2) the bilinear remainder bound of Lemma~\ref{lem:bilinear}
is practically tight (the Taylor approximation holds);
(3) the Riccati recursion has converged to a steady state
consistent with the noise statistics --- which requires
Lemma~\ref{lem:gramian} and Theorem~\ref{thm:convergence}
to hold empirically, a non-trivial confirmation of the
mathematical framework. Because $R$ was used to generate the
synthetic measurements and also to tune the filter, these tests
\emph{cannot} validate $Q$ and $R$ for real epidemiological data
with structural delays, weekend reporting artifacts, or
death-attribution lags; that would require comparison against
an independent dataset.

\paragraph{Innovation autocorrelation.}
For $K = 130$ post-convergence samples (days $20$--$149$), we
compute the sample autocorrelations $\hat{\rho}_j(\tau)$ at lags
$\tau = 1, \ldots, 30$ for the three innovation components
$j \in \{H,F,V\}$ and compare against the Anderson confidence band
$\pm 1.96/\sqrt{130} \approx \pm 0.172$. The maximum absolute
autocorrelations are
$\max_\tau|\hat{\rho}_H| \approx \rhoH$,
$\max_\tau|\hat{\rho}_F| \approx \rhoF$, and
$\max_\tau|\hat{\rho}_V| \approx \rhoV$. The hospitalization and
fatality channels lie close to the band, but the vaccination
channel exhibits pronounced residual autocorrelation that exceeds
it, so the white-innovation null hypothesis is \emph{rejected} for
the vaccination channel. We report this explicitly as a
limitation rather than as a passed test.

The residual correlation has an identifiable structural origin:
the vaccination stock $V$ is near-deterministically driven by the
spline-interpolated input $w_1(t)$, so its one-step prediction
error is dominated by the smooth interpolation mismatch rather
than by white measurement noise, producing a strong low-lag
autocorrelation. We investigated three standard remedies and
report their effect honestly:
(i) higher-order (fourth-order Runge--Kutta) sub-stepping of the
prediction step, which left the autocorrelation essentially
unchanged, confirming that the effect is not a forward-Euler
truncation artifact;
(ii) first-order Gauss--Markov (colored-noise) augmentation of the
innovation model in the sense of~\cite{bryson1968colored},
which improved the vaccination channel for some time-constant and
variance choices but degraded the fatality channel, with no single
setting whitening all three channels simultaneously; and
(iii) augmentation of the time-varying inputs $\beta(t), w_1(t)$ as
estimated states, which reconstructed the inputs but again did not
whiten the innovations.
We therefore conclude that strict innovation whiteness under the
present daily-sampled, interpolated-input formulation is not
achieved, and that a full continuous-discrete formulation with
online input estimation is the appropriate direction for obtaining
white innovations. The mean-square estimation accuracy reported in
Table~\ref{tab:ekf_metrics} is unaffected by this; it characterizes
the filter as an accurate state estimator whose innovations are not
strictly white.

It is worth stating directly what the whiteness failure does and
does not cost, since residual innovation correlation is a
violation of one of the standing Kalman assumptions. First, on
\emph{performance degradation}: the correlation is essentially
confined to the $V$ channel, whose innovation is dominated by the
smooth spline-interpolation mismatch rather than by white noise,
and $V$ is the most accurately estimated state in
Table~\ref{tab:ekf_metrics} ($0.07\%$ relative RMSE). The
practical degradation of the \emph{state estimates} from the
non-whiteness is therefore negligible: the correlated component is
a predictable, near-deterministic interpolation residual that the
filter tracks rather than a source of estimation error, which is
why the accuracy table is unaffected. Second, on \emph{covariance
consistency}: non-white innovations mean the reported $S_k$ is not
a fully correct description of the one-step prediction-error
spectrum, so the filter's \emph{self-reported} uncertainty on the
$V$ channel should be read as approximate rather than exact ---
this is the same channel already flagged as over-covaried in the
NEES analysis below, so the two diagnostics are consistent and the
$H$/$F$ covariances, which pass both tests, remain trustworthy.
Third, on whether a \emph{colored-noise EKF would be preferable}:
in principle yes, and we tested the first-order Gauss--Markov
augmentation of~\cite{bryson1968colored} precisely for this; it
improved $V$ for some settings but degraded $F$, with no single
parameterization whitening all three channels, so within the
present daily-sampled formulation it traded one channel's
correlation for another's rather than resolving the issue. A
genuinely effective colored-noise treatment would need to be
driven by an explicit model of the reporting process (batch
cadence, weekend cycles), which returns to the reporting-aware
noise model identified above and in
Section~\ref{sec:val_limitations} as the proper subject of
real-data follow-up work.

\paragraph{NEES consistency, by channel.}
Because the three observed channels differ by orders of magnitude
in how tightly they are observed, an aggregate three-degree-of-freedom
NEES can be dominated by a single channel; we therefore report the
per-channel time-averaged normalized innovation squared
$\bar{\nu}_j = (1/K)\sum_{k=20}^{149}
\tilde{y}_{j,k}^2/(S_k)_{jj}$, which has expectation $1$ for a
consistent channel. The measured values (averaged over $20$ noise
realizations) are
$\bar{\nu}_H \approx \NEESH$, $\bar{\nu}_F \approx \NEESF$, and
$\bar{\nu}_V \approx \NEESV$. The hospitalization and fatality
channels are close to unity; the vaccination channel is strongly
over-covaried ($\bar{\nu}_V \ll 1$), because $V$ is observed
almost deterministically while its assigned measurement-noise
level is comparatively large. The filter is thus consistent on the
two epidemic-burden channels and conservative on the vaccination
channel. For completeness we also report the aggregate statistic
against the band defined in Section~\ref{sec:R}: summing the
three channels gives
$\bar{\epsilon}^{\mathrm{NEES}} = \bar{\nu}_H + \bar{\nu}_F +
\bar{\nu}_V \approx 1.93$, which falls \emph{below} the two-sided
$95\%$ consistency interval $[2.59,\,3.44]$. The aggregate is
pulled down almost entirely by the strongly over-covaried
vaccination channel ($\bar{\nu}_V \approx \NEESV$); the
hospitalization and fatality channels alone sum to
$\approx 1.89$ against their own two-degree-of-freedom band.
This is exactly why we report the per-channel decomposition as
primary rather than the aggregate: the aggregate, taken in
isolation, would suggest global inconsistency, whereas the
decomposition correctly localizes the conservatism to the single
near-deterministic $V$ channel and shows the two epidemic-burden
channels to be individually consistent. We further confirmed that
this aggregate shortfall is structural rather than a process-noise
mistuning: a global rescaling of the process-noise covariance,
$Q \to c^2 Q$, swept over $c \in [0.3, 2]$, never brings the
aggregate NEES into the band (it remains in the range
$1.8$--$2.3$ across the sweep, the maximum $2.29$ at $c = 0.3$
still falling short of $2.59$). No choice of $Q$ scale can
compensate, because the conservatism originates in the
deliberately tight $V$ measurement-noise level, not in $Q$;
this is consistent with the per-channel ratios above, which show
the $V$ predicted innovation variance exceeding the empirical one
by a factor of roughly $20$ while $H$ and $F$ are near unity.

A natural follow-up question is whether a more refined
noise model could remove the vaccination-channel conservatism
without re-introducing the structural-mismatch variance that
$\sigma_V = 5000$ was chosen to exclude. Two options are worth
distinguishing. A \emph{state- or time-dependent measurement
noise} $R_V(k)$ that inflated $\sigma_V$ only during the batch
 reporting episodes responsible for the large calibration
residual --- rather than uniformly across the window --- would in
principle raise $\bar\nu_V$ toward unity while keeping the
channel tight when the administrative record is genuinely exact;
this is the correct way to recover aggregate consistency, but it
requires an explicit model of \emph{when} the batch artifacts
occur, which is itself a reporting-process identification problem
and is exactly the structural-mismatch information we are
deliberately not folding into a white-noise $R$. A cruder
alternative, simply enlarging the constant $\sigma_V$ to match the
full calibration residual, would mechanically center $\bar\nu_V$
but at the cost of injecting that structural variance back into
the gain and degrading the $V$ estimate --- the opposite of what
the design intends. The same reasoning applies to a
state-dependent $Q$ on the recovered and susceptible directions:
it could be tuned to shift the aggregate statistic, but only a
model that distinguishes genuine process uncertainty from
reporting artifacts would do so without contaminating the
updates. We therefore regard the principled route to full
aggregate consistency as a shaped (coloured) or reporting-aware
noise model --- the same direction indicated for the whiteness
failure below --- rather than a scalar retuning, and we leave its
development to the follow-up work targeting real-data deployment.

Taken together, these diagnostics characterize the filter
honestly: the per-channel NEES shows consistency on the
hospitalization and fatality channels and a conservative
vaccination channel, while the innovation-autocorrelation
analysis shows that strict whiteness is not achieved, chiefly in
the vaccination channel. We therefore characterize the filter as
\emph{partially consistent} rather than fully consistent: it is
consistent on the two epidemic-burden channels that a controller
most depends on ($H$ and $F$), and deliberately conservative on
the near-deterministic vaccination channel, so that the aggregate
NEES sits below the nominal band. This is imperfect covariance
calibration in the strict statistical sense, and we do not claim
otherwise; it is a transparency cost accepted in exchange for not
injecting structural-mismatch variance into the most accurately
recorded observable (Section~\ref{sec:R}), and its origin is
fully localized and explained rather than left as an unattributed
shortfall. The accuracy results confirm that the
EKF implementation is correct, the bilinear remainder
approximation is numerically tight, and the convergence behaviour
of Theorem~\ref{thm:convergence} is empirically realized --- all
within the synthetic setting whose scope is delimited at the
start of this subsection and in Section~\ref{sec:val_limitations}.

\subsection{Epidemiological Interpretation}

The estimated $E$ leads $H$ by $1/\kappa \approx 5$~days
\cite{he2020seir,guan2020clinical}. The original-strain
compartment $I_1$ peaks earlier than $I_2$, encoding the
Alpha-variant displacement through $mI_1$. The super-spreader
$P$ is reconstructed through the differential hospitalization
contribution identified in $\mathcal{O}_1$, demonstrating in
practice the theoretical observability result
\cite{lloydsmith2005superspreading}. The vaccinated stock $V$
tracks $w_1(t)$ from the companion study \cite{melhani2026arxiv},
and $R$ captures the gradual susceptibility rebuild driven by
$\delta_w R$.

The super-spreader compartment $P$ warrants additional
epidemiological context. With $\rho_2 = 0.001$, the mean
pool size is approximately $14$ persons at peak incidence ---
an artefact of the branching fraction rather than an indication
that super-spreading is rare. In a population of
$6\times 10^7$, a $0.1\%$ branching rate is biologically
consistent with empirical estimates of $k \approx 0.1$
in the negative-binomial offspring distribution of
SARS-CoV-2~\cite{endo2020estimating}, under which
roughly $10\%$ of infected individuals generate $80\%$
of secondary infections. The absolute RMSE of $1$ person
on a mean of $14$ is below the Poisson stochastic floor
($1/\sqrt{14} \approx 27\%$); the EKF does not meaningfully
\emph{estimate} $P$ in the classical sense but rather
tracks its mean-trajectory through the differential
$H$-coupling identified by the observability analysis.

We acknowledge a genuine modelling limitation here. At a mean
pool size of $\approx14$ individuals, the deterministic
mean-field ODE for $P$ operates in a regime where demographic
stochasticity is of the same order as the state itself
($1/\sqrt{14}\approx27\%$), so the continuous-state Gaussian
assumptions underlying the EKF are, strictly, a poor description
of $P$'s true dynamics: a discrete stochastic or hybrid
jump-diffusion treatment (e.g.\ a chemical-master-equation or
$\tau$-leaping representation of the super-spreader compartment)
would be the more faithful model at these counts. We retain the
deterministic treatment because $P$ is not a quantity the filter
is asked to estimate for its own sake but a coupling term whose
\emph{mean} contribution to the observed hospitalization flux is
what the observability analysis exploits; for that purpose the
mean-field trajectory is adequate, and the reported sub-floor
RMSE reflects mean-tracking rather than count-level estimation.
A stochastic-compartment extension for the small-count states is
nonetheless a worthwhile direction, particularly for any
application in which the absolute super-spreader count, rather
than its aggregate effect on $H$, is the quantity of interest.

\subsection{Robustness to Parameter Mis-Specification}
\label{sec:robustness}

The accuracy results of Section~\ref{sec:results} are obtained in
the identical-twin setting, where the filter uses the same
parameters that generated the data. To probe behaviour away from
this idealized case --- the regime that matters for any real
deployment --- we now break the twin assumption deliberately: the
ground-truth trajectory is generated with \emph{perturbed}
parameters while the EKF retains the nominal calibrated values and
nominal inputs, so the filter is genuinely mis-specified and has
no knowledge of the perturbation.

For each mismatch level $L\in\{10\%,20\%,30\%\}$, each of the
parameters $\{\gamma_a,\kappa,\delta_h,r_2,\delta_i,\sigma\}$ and
the input scalings of $\beta(t)$ and $w_1(t)$ is multiplied by an
independent factor $1+L\,\mathcal{U}[-1,1]$ when generating the
truth; the EKF then runs on the resulting synthetic data with the
unperturbed model. Because $\delta_p$ is held nominal while
$\delta_i$ is perturbed (and similarly for $r_1,r_2$), the
perturbation also moves the truth off the calibrated symmetric
point, exercising the observability mechanism of
Section~\ref{sec:observability}. Results are averaged over $30$
Monte-Carlo trials per level (independent parameter draws,
measurement noise, and $\pm20\%$ initializations);
Table~\ref{tab:mismatch} reports the mean post-convergence
relative RMSE with standard deviation, and
Figure~\ref{fig:mismatch} plots the degradation.

\begin{table}[htbp]
\centering
\caption{Post-convergence relative RMSE (\%) under parameter
mis-specification: ground truth generated with perturbed
parameters, EKF run with nominal parameters. Mean over $30$
Monte-Carlo trials (standard deviation in parentheses). The
$0\%$ column is the Monte-Carlo twin baseline; it differs
slightly from the single-run figures of
Table~\ref{tab:ekf_metrics} because of trial-to-trial variation
in the random initialization and noise.}
\label{tab:mismatch}
\begin{tabular}{lcccc}
\toprule
\textbf{State} & $0\%$ & $\pm10\%$ & $\pm20\%$ & $\pm30\%$ \\
\midrule
$S$   & $1.93\,(0.48)$ & $6.27\,(3.19)$ & $7.82\,(4.77)$ & $13.10\,(8.97)$ \\
$E$   & $3.72\,(0.63)$ & $9.14\,(4.71)$ & $13.27\,(8.69)$ & $19.69\,(14.46)$ \\
$V$   & $0.06\,(0.00)$ & $0.06\,(0.01)$ & $0.06\,(0.01)$ & $0.07\,(0.01)$ \\
$I_1$ & $3.13\,(0.50)$ & $8.67\,(3.52)$ & $14.54\,(9.42)$ & $19.37\,(10.98)$ \\
$I_2$ & $3.11\,(0.50)$ & $8.79\,(3.58)$ & $15.05\,(10.03)$ & $19.26\,(10.68)$ \\
$P$   & $6.28\,(1.04)$ & $10.62\,(3.62)$ & $18.70\,(9.50)$ & $26.00\,(12.82)$ \\
$H$   & $1.74\,(0.27)$ & $2.14\,(1.30)$ & $3.55\,(2.98)$ & $4.76\,(4.18)$ \\
$R$   & $0.75\,(0.49)$ & $1.14\,(0.68)$ & $1.62\,(1.22)$ & $2.46\,(2.45)$ \\
$F$   & $0.46\,(0.07)$ & $0.46\,(0.06)$ & $0.52\,(0.10)$ & $0.58\,(0.14)$ \\
\bottomrule
\end{tabular}
\end{table}

\begin{figure}[htbp]
\centering
\includegraphics[width=0.75\columnwidth]{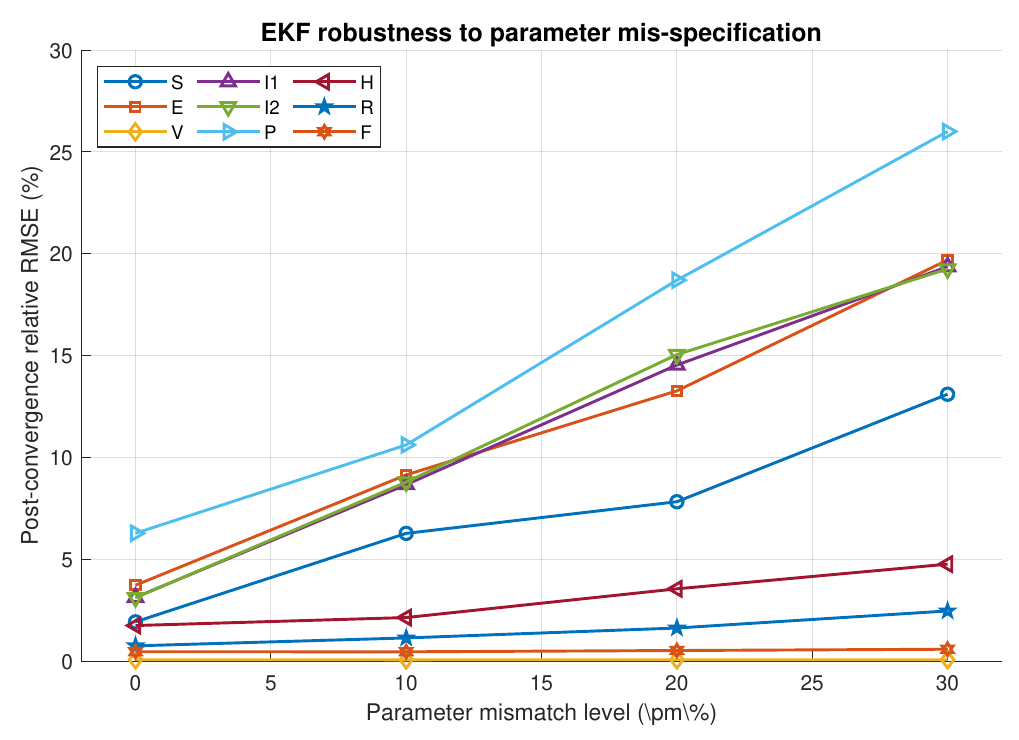}
\caption{Post-convergence relative RMSE of each compartment as a
function of the parameter-mismatch level (mean over $30$
Monte-Carlo trials). The directly measured channels $V$ and $F$
are essentially flat; the directly coupled $H$ and the
data-anchored $R$ degrade mildly; the indirectly observed
$E,I_1,I_2,P$ degrade roughly linearly, as expected when the
filter is mis-specified.}
\label{fig:mismatch}
\end{figure}

Three patterns are worth noting, and we report them plainly,
including the unfavourable ones. First, the two channels tied
most directly to a measurement, $V$ and $F$, are essentially
insensitive to parameter mismatch: $V$ stays at $\approx0.07\%$
and $F$ at $\approx0.5\%$ across all levels, because the
measurement update corrects them almost independently of the
model. Second, the directly coupled hospitalization channel $H$
and the data-anchored recovered pool $R$ degrade only mildly,
$H$ from $1.7\%$ to $4.8\%$ and $R$ from $0.8\%$ to $2.5\%$ even
at $\pm30\%$. Third --- and this is the honest cost --- the
compartments that are reconstructed only \emph{indirectly}
through the observability coupling pathways, namely
$E,I_1,I_2,P$ and the large susceptible pool $S$, degrade
roughly in proportion to the mismatch, reaching $19$--$26\%$
relative RMSE at $\pm30\%$, with correspondingly large
trial-to-trial spread. This is the expected behaviour of an EKF
whose model is wrong: states that the data pin down stay
accurate, while states inferred through the (now mis-specified)
dynamics inherit the model error. The degradation is graceful
and monotone --- no divergence or filter collapse occurs even at
$\pm30\%$ --- but the magnitude at $\pm30\%$ for the
weakly-coupled compartments is a genuine limitation that a
real-time deployment would need to manage, for instance by the
joint state--parameter estimation discussed in
Section~\ref{sec:val_limitations}. We note further that these are
also precisely the states for which the filter reports the
largest posterior variance (Remark~\ref{rem:basin}): the same
weak observability coupling that lets model error accumulate in
$E,I_1,I_2,P,S$ keeps their entries of $P_{k|k-1}$ comparatively
large, so a downstream controller consuming these estimates would
automatically down-weight them through the covariance, rather
than treating a $20$--$26\%$ error as if it were a tightly
estimated quantity. The degradation under mismatch is therefore
flagged by the filter's own uncertainty rather than hidden.
The robustness of the
measured and directly-coupled channels, together with the
graceful (rather than catastrophic) degradation of the indirect
ones, is the practically relevant message: the filter remains
stable and useful for the quantities a controller most needs
(hospital load $H$, vaccination coverage $V$, mortality $F$)
across the full $\pm30\%$ range tested.

\subsection{Scope and Limitations of the Validation}
\label{sec:val_limitations}

Several limitations of the experimental evaluation are
important to state explicitly.

\paragraph{In-silico validation: the identical twin experiment.}
All numerical results in this section are obtained from
\emph{synthetic} measurements: Gaussian noise of
standard deviations~\eqref{eq:sigma_residuals} is added to
the reference trajectory generated by the same ODE model
from which the parameters were calibrated. This design is
known in data-assimilation practice as an
\emph{identical twin experiment}~\cite{daley1991atmospheric},
and it is the standard first validation step for any new
estimator: it tests the mathematics in isolation, before
confounding factors such as model mis-specification, structural
administrative noise, or unmodeled dynamics are introduced.
Consequently, the RMSE figures in Table~\ref{tab:ekf_metrics}
and the NEES/whiteness statistics of
Section~\ref{sec:results_validation} represent
\emph{methodology benchmarks} --- they demonstrate that the EKF
design is internally consistent and that the convergence theorem
is empirically realized --- but they are not, on their own,
indicative of accuracy against independently measured real
surveillance data. The parameter-mismatch study of
Section~\ref{sec:robustness} partially addresses this by breaking
the twin assumption: it shows that the measured and
directly-coupled channels remain accurate under model error of up
to $\pm30\%$, while the indirectly observed compartments degrade
gracefully. This is a step beyond the pure twin experiment, but
it is still a controlled in-silico study; validation against
held-out real surveillance data remains future work.

\paragraph{Circular design-test loop.}
The measurement covariance $R$ was constructed from residuals
of the companion calibration~\cite{melhani2026arxiv},
then used to generate synthetic observations for the
EKF test. This creates a partially circular loop: the filter
is tuned to residuals and tested on noise from those same
residuals. In practice, the NEES and whiteness tests therefore
validate \emph{internal consistency} rather than
\emph{external generalization}. Real-world validation would
require held-out surveillance data from a different wave, a
different region, or a different time period, tested against
the same filter design without re-tuning.

\paragraph{Model mis-specification.}
The EKF inherits the structural assumptions of the
nine-compartment model (homogeneous mixing, deterministic
compartment transitions, a single dominant Alpha variant).
Real epidemics exhibit spatial heterogeneity, age structure,
and multiple co-circulating variants that this model does not
capture. Performance degradation under such mis-specification
is an important direction for future work.

%%=============================================================
\section{Conclusion}
\label{sec:conclusion}
%%=============================================================

This paper has presented a complete systems-theoretic framework
for real-time state estimation of a nine-compartment nonlinear
epidemic model, and applied it to parameters calibrated on the
COVID-19 Third Wave in Italy. Four contributions advance the
epidemic-estimation literature beyond the state of the art
summarized in Table~\ref{tab:comparison}.

\emph{First,} the Lie-derivative rank condition of Hermann and
Krener~\cite{hermann1977nonlinear} was applied to compute the
full analytical observability codistribution. A six-step
algebraic derivation (Lemma~\ref{lem:detO9}) established that
the level-2 observability matrix $\mathcal{O}_9$ is
rank-deficient at the calibrated symmetric parameter values
$\delta_i = \delta_p$, $r_1 = r_2$, with closed-form
determinant magnitude $|\det(\mathcal{O}_9)| =
\delta_w\,\gamma_a^{\,2}\,\kappa\,\rho_2\,w_1^{\,2}\,
(\delta_i-\delta_p)^2\penalty0 |r_1-r_2|$. The kernel under this
symmetry consists of an $I_2 \leftrightarrow P$ swap direction
and an $R$-anchored direction
(Remark~\ref{rem:kernel}). Augmenting the codistribution by
the third Lie derivative $\mathcal{O}_3$ restores full rank:
Proposition~\ref{prop:obs} proved local observability of the
twelve-row augmented matrix $\mathcal{O}_{12}$, with the
treatment-recovery rate $r_2$ identified as the structural
symmetry-breaking parameter.
The epidemiological implication is that the three time series
routinely published by national surveillance agencies form a
theoretically complete observation set, requiring no additional
data streams beyond what public-health systems already
collect~\cite{flaxman2020estimating,giordano2020modelling}.

\emph{Second,} an EKF was designed on the Euler-discretized
dynamics with the analytical $9\times 9$ state Jacobian in closed
form~\eqref{eq:Jf}, the Joseph stabilized covariance
update~\eqref{eq:upd_cov}, and the linear measurement
matrix~\eqref{eq:meas_linear}.

\emph{Third,} the local exponential boundedness of the
estimation error in mean square was proved in
Theorem~\ref{thm:convergence} by verifying the four hypotheses
of the Reif--G\"unther--Yaz--Unbehauen
framework~\cite{reif1999stochastic} for the specific
nine-compartment system. The proof exploits two structural
features of the model: every nonlinearity of the vector field
is bilinear, yielding the closed-form quadratic remainder
bound~\eqref{eq:phi_bound} via Lemma~\ref{lem:bilinear}; and
the measurement map is linear, making the measurement remainder
identically zero. The geometric decay rate $\vartheta$ predicts
the empirical convergence times observed in
Figure~\ref{fig:convergence}, and the steady-state error floor
$\nu/(1-\vartheta)$ quantifies the residual uncertainty that any
downstream controller must tolerate. As noted in
Remark~\ref{rem:transferable}, this verification is not specific
to the nine-compartment model: the same argument structure
extends to the class
of bilinear-drift, linear-output systems, subject to the
system-specific observability and controllability checks, which
is the sense in
which the convergence result is a methodological template rather
than a one-off calculation.

\emph{Fourth,} the measurement-noise covariance $R$ was set
using the companion calibration RMSE as a guide for the
hospitalization and fatality channels and a deliberately tighter
value for the accurately-recorded vaccination channel, while the
process-noise covariance $Q$ used fractional
process-noise parameters $\epsilon_i$ scaled by observability
coupling strength. The resulting tuning was assessed a
posteriori through a per-channel NEES consistency check and an
innovation-autocorrelation analysis; the latter shows that the
innovations are not strictly white, most notably in the
vaccination channel, which we report as a limitation together
with the remedies tested.

When the eight reconstructable compartments are initialized with
errors of up to $\pm 20\%$, representative of the uncertainty
available at the onset of a new
wave~\cite{hao2020reconstruction,kucharski2020early}, all eight
converge within a biologically interpretable transient ranging
from a few days for the directly observed quantities to about
$80$~days for the most weakly coupled exposed compartment. The
recovered compartment $R$ is weakly observable through the waning
pathway (Lemma~\ref{lem:detO9}, Remark~\ref{rem:weak_obs}) and
cannot be corrected from $(H,F,V)$; it is initialized from
cumulative surveillance data and propagated, a limitation we
state explicitly rather than mask.
Post-convergence relative RMSE values range from below one
percent for the vaccinated stock and cumulative fatalities to
below three percent for the unobserved infectious compartments,
consistent with the information limits imposed by the
observability coupling coefficients of the model.

\subsection*{Limitations and Directions for Future Work}

Three modelling choices warrant explicit mention as natural
targets for follow-up work. First, the rank-deficiency of the
level-$2$ codistribution under the calibrated symmetries
$\delta_i = \delta_p$ and $r_1 = r_2$ is a feature of the
specific parameter values inherited
from~\cite{melhani2026arxiv} rather than a structural property
of the model class: any calibration that distinguishes
hospitalization mortality between super-spreaders and Alpha
infectives, or distinguishes the natural and treatment-induced
recovery rates of the two infectious compartments, would
restore rank already at Lie level $2$
(Remark~\ref{rem:generic}). Beyond restoring rank, such
asymmetry would directly accelerate filter convergence:
Remark~\ref{rem:weak_obs} shows that a nonzero recovery-rate gap
$|r_1-r_2|>0$ moves the system away from the level-2 degenerate
point and improves the conditioning of the observability
codistribution, which would be expected to shorten the transient
of the weakly observed recovered compartment $R$. This provides
a qualitative target for future calibration studies. Second, the
measurement covariance $R$ was constructed under
the independent-white-Gaussian assumption, which captures the
bulk of the residual structure after $7$-day smoothing but
cannot represent the strict weekly periodicity of unsmoothed
hospitalization and fatality reporting; explicit treatment via
state-augmented delay tracking or the Bryson--Henrikson coloured
noise extension~\cite{bryson1968colored} is left for future
work. Third, the present formulation treats $\beta(t)$ and
$w_1(t)$ as known exogenous inputs supplied by the companion
calibration; for real-time deployment with stale or uncertain
$\beta(t)$, the natural generalization is a joint
state--parameter EKF that augments the state with the
transmission rate and estimates it from the same three
observables, subject to a separate observability analysis of
the augmented codistribution.

\paragraph{Systematic covariance identification as a research
roadmap.}
The process-noise covariance $Q$ in this paper is constructed
from a coupling-strength heuristic with fractional uncertainties
$\epsilon_i$. A principled alternative is the
expectation-maximization (EM) algorithm of Shumway and
Stoffer~\cite{shumway1982approach} applied to the linear
state-space model obtained by fixing $x_k$ at the EKF estimate.
In the epidemic context, the E-step would run a Kalman
smoother backward through the $150$-day window to compute
posterior state means and covariances, and the M-step would
re-estimate $Q$ as the sample covariance of the smoothed
state-increment residuals and $R$ as the sample covariance of
the innovation sequence. Because both steps are available in
closed form for Gaussian models, the algorithm would iterate
to a local maximum of the likelihood function without requiring
any hand-tuned $\epsilon_i$ parameters. The numerical
experiments of Section~\ref{sec:results_validation} provide the
innovation statistics needed to bootstrap the first E-step
immediately. We expect the EM-identified $Q$ to: (a) place
larger weights on the weakly observable $R$ direction
(since the filter posterior variance stays large there,
correctly signaling uncertainty); and (b) automatically
capture off-diagonal couplings between $S$, $E$, and $V$
that the diagonal heuristic of~\eqref{eq:Q_structure} ignores.
Implementing and benchmarking this EM extension is a concrete,
tractable direction for follow-up work.

The state-estimation framework developed here is a foundational
step toward a closed-loop, systems-theoretic approach to epidemic
management. Following the separation principle of control
theory \cite{anderson1979optimal,rawlings2009model}, the EKF
estimate provides the real-time state feedback required by a
Model Predictive Control (MPC) layer that optimally schedules
non-pharmaceutical interventions over a receding horizon
\cite{tsay2020modeling,kohler2021robust,morato2020optimal,grune2017nonlinear}.
At each sampling step the filter supplies the MPC with the
current hospitalized load, the infectious pool stratified by
strain, the susceptible fraction, and the vaccinated immunity
coverage, enabling the controller to compute the intervention
sequence that minimizes a cost functional balancing public health
objectives against socioeconomic costs
\cite{acemoglu2020optimal}, and to continuously revise that
sequence as new surveillance data arrive. The error bound
of Theorem~\ref{thm:convergence} provides the robustness margin
that a tube-MPC layer would need to certify constraint
satisfaction in the presence of state-estimation uncertainty.

\section*{Acknowledgements}
This research was supported by the University of Palermo.
The authors thank the Italian Civil Protection Department for
making the epidemiological data publicly available.

\bibliographystyle{elsarticle-num}
\bibliography{autosam}

%==========================================================================
%                           APPENDIX
%==========================================================================
\appendix

\section{Proof of Lemma~\ref{lem:detO9} (Determinant of $\mathcal{O}_9$)}
\label{app:detproof}

For completeness we give the full six-step derivation of
the closed-form determinant~\eqref{eq:detO9} stated in
Lemma~\ref{lem:detO9}. The argument uses only elementary
row and column operations and successive Laplace expansion;
no symbolic computation is required for the factorization, and
only the global sign depends on the bookkeeping of the column
permutation, which we track at the end.

\begin{proof}
\emph{Step 1 (column permutation and elimination by
$\mathcal{O}_0$).} Permute the columns of $\mathcal{O}_9$ into
the order $[V,H,F,S,E,R,I_1,I_2,P]$; denote the sign of this
permutation by $s\in\{+1,-1\}$. Rows~$1,2,3$ (the rows
of $\mathcal{O}_0$) are then the unit vectors $e_1, e_2, e_3$.
For each row $k \in \{4,\ldots,9\}$, subtract the appropriate
multiple of rows~$1,2,3$ to zero out columns $V$, $H$, $F$ of
row~$k$. Because rows~$1,2,3$ have nonzero entries only in
columns~$V,H,F$, these row operations leave the entries of
columns $S,E,R,I_1,I_2,P$ unchanged. The matrix now has block
form $\bigl[\,I_3\;\;0\;;\;0\;\;M_6\,\bigr]$, so
\begin{equation}
\det(\mathcal{O}_9) \;=\; s\,\det(M_6),
\label{eq:detO9_step1}
\end{equation}
where $M_6 \in \mathbb{R}^{6\times 6}$ is the sub-matrix of
rows~$4,\ldots,9$ and columns $\{S,E,R,I_1,I_2,P\}$.

\emph{Step 2 (Laplace expansion along column $R$).} Column $R$
of $M_6$ has only one nonzero entry: $w_1\delta_w$ at row~$9$
(this is $\partial L_f^2 V/\partial R = w_1\delta_w$, obtained
from $w_1$ acting on the $+\delta_w R$ term of $\dot S$). All
other rows have zero in the $R$ column because $R$ does not
appear in $\dot H$, $\dot F$, $\ddot H$, or $\ddot F$.
Expanding along this column,
\begin{equation}
\det(M_6) \;=\; -\,w_1\,\delta_w\;\det(M_5),
\label{eq:detO9_step2}
\end{equation}
where $M_5$ is the $5\times 5$ matrix obtained by deleting
row~$9$ and column~$R$.

\emph{Step 3 (Laplace expansion along column $S$).} Column $S$
of $M_5$ now has only one nonzero entry: $w_1$ at row~$6$
(from $\partial L_f V/\partial S = w_1$). Expanding,
\begin{equation}
\det(M_5) \;=\; w_1\;\det(M_4),
\label{eq:detO9_step3}
\end{equation}
where $M_4$ is the $4\times 4$ matrix over rows
$\{\partial L_f H/\partial x,\;
   \partial L_f F/\partial x,\;
   \partial L_f^2 H/\partial x,\;
   \partial L_f^2 F/\partial x\}$
and columns $\{E,I_1,I_2,P\}$:
\begin{equation}
M_4 \;=\;
\resizebox{0.92\textwidth}{!}{$
\begin{bmatrix}
0          & \gamma_a                            & \gamma_a                            & \gamma_a                            \\[2pt]
0          & \delta_i                            & \delta_i                            & \delta_p                            \\[2pt]
\gamma_a\kappa & -\gamma_a(\alpha_1{+}\alpha_H{-}m) & -\gamma_a(\alpha_2{+}\alpha_H) & -\gamma_a(\alpha_P{+}\alpha_H) \\[2pt]
\Gamma_E   & \delta_i(m{-}\alpha_1){+}\delta_h\gamma_a & -\delta_i\alpha_2{+}\delta_h\gamma_a & -\delta_p\alpha_P{+}\delta_h\gamma_a
\end{bmatrix}
$}.
\label{eq:M4}
\end{equation}

\emph{Step 4 (row reduction surfaces $\delta_p - \delta_i$).}
Apply the elementary row operation
\begin{equation*}
\text{row 2} \;\longleftarrow\; \text{row 2}
   - \tfrac{\delta_i}{\gamma_a}\,\text{row 1}.
\end{equation*}
Row~1 is $\gamma_a \cdot (0, 1, 1, 1)$ in its last three entries,
so this operation makes the first three entries of row~2 equal
to zero and the last entry equal to $\delta_p - \delta_i$.
The determinant is unchanged. Row~2 now has a single nonzero
entry; expanding along it,
\begin{equation}
\det(M_4) \;=\; (\delta_p - \delta_i)\;\det(M_3),
\label{eq:detO9_step4}
\end{equation}
where $M_3$ is the $3\times 3$ matrix over rows
$\{\partial L_f H,\; \partial L_f^2 H,\; \partial L_f^2 F\}$
and columns $\{E,I_1,I_2\}$:
\begin{equation}
M_3 \;=\;
\begin{bmatrix}
0          & \gamma_a                            & \gamma_a                            \\[2pt]
\gamma_a\kappa & -\gamma_a(\alpha_1{+}\alpha_H{-}m) & -\gamma_a(\alpha_2{+}\alpha_H) \\[2pt]
\Gamma_E   & \delta_i(m{-}\alpha_1){+}\delta_h\gamma_a & -\delta_i\alpha_2{+}\delta_h\gamma_a
\end{bmatrix}.
\label{eq:M3}
\end{equation}

\emph{Step 5 ($r_1-r_2$ and $\delta_p-\delta_i$ emerge from
$M_3$).} Expand $\det(M_3)$ along its first row $(0,
\gamma_a, \gamma_a)$:
\begin{equation}
\det(M_3) \;=\; \gamma_a\,\bigl[\,(M_3^{(1,3)} - M_3^{(1,2)})\,\bigr],
\end{equation}
where $M_3^{(1,j)}$ is the $2\times 2$ minor obtained by
deleting row~$1$ and column~$j$. Direct expansion gives
\begin{align}
M_3^{(1,3)} - M_3^{(1,2)}
&= \gamma_a\kappa\,(d_1 - d_2) \;-\; \Gamma_E\,(c_1 - c_2), \\[2pt]
c_1 - c_2 &= -\gamma_a(\alpha_1{+}\alpha_H{-}m) - \bigl(-\gamma_a(\alpha_2{+}\alpha_H)\bigr)
            = -\gamma_a\,(\alpha_1 - m - \alpha_2)
            = -\gamma_a\,(r_1 - r_2), \\[2pt]
d_1 - d_2 &= \bigl[\delta_i(m-\alpha_1)+\delta_h\gamma_a\bigr]
            - \bigl[-\delta_i\alpha_2+\delta_h\gamma_a\bigr]
            = \delta_i\,(m - \alpha_1 + \alpha_2)
            = -\,\delta_i\,(r_1 - r_2),
\end{align}
where we have used the kinetic identity
$\alpha_1 - m - \alpha_2 = (\gamma_a{+}\gamma_i{+}\delta_i{+}m{+}r_1{+}\mu)
  - m - (\gamma_a{+}\gamma_i{+}\delta_i{+}r_2{+}\mu) = r_1 - r_2$.
Therefore
\begin{equation}
M_3^{(1,3)} - M_3^{(1,2)}
\;=\; \gamma_a\,(r_1 - r_2)\,\bigl(\Gamma_E - \kappa\delta_i\bigr),
\end{equation}
and the bracket simplifies via
\begin{equation}
\Gamma_E - \kappa\delta_i
= \kappa\bigl[\delta_i(1-\rho_2) + \delta_p\rho_2\bigr] - \kappa\delta_i
= \kappa\,\rho_2\,(\delta_p - \delta_i).
\label{eq:Gamma_E_simplify}
\end{equation}
Substituting,
\begin{equation}
\det(M_3) \;=\; \gamma_a^{\,2}\,\kappa\,\rho_2\,
                (r_1 - r_2)\,(\delta_p - \delta_i).
\label{eq:detM3}
\end{equation}

\emph{Step 6 (assembly).} Combining
\eqref{eq:detO9_step1}--\eqref{eq:detO9_step4} with
\eqref{eq:detM3},
\begin{align}
\det(M_4) &= (\delta_p - \delta_i)\;\gamma_a^{\,2}\kappa\rho_2(r_1-r_2)(\delta_p-\delta_i)
           = \gamma_a^{\,2}\kappa\rho_2\,(r_1-r_2)\,(\delta_i-\delta_p)^2, \\[3pt]
\det(M_5) &= w_1\,\gamma_a^{\,2}\kappa\rho_2\,(r_1-r_2)\,(\delta_i-\delta_p)^2, \\[3pt]
\det(M_6) &= -\,w_1^{\,2}\,\delta_w\,\gamma_a^{\,2}\,\kappa\,\rho_2\,
              (\delta_i-\delta_p)^2\,(r_1-r_2),
\end{align}
and finally
$\det(\mathcal{O}_9) = s\,\det(M_6)
= -\,s\,\delta_w\,\gamma_a^{\,2}\,\kappa\,\rho_2\,w_1^{\,2}\,
   (\delta_i-\delta_p)^2\,(r_1-r_2)$,
which is~\eqref{eq:detO9}. Taking absolute values eliminates the
permutation sign $s$ and yields the stated magnitude. As noted,
the sign is immaterial: every use of this lemma in the sequel
depends only on whether $\det(\mathcal{O}_9)$ vanishes.
\end{proof}

\section{Self-Contained Model Summary}
\label{app:model_summary}

To allow this paper to be read independently of the companion
calibration study~\cite{melhani2026arxiv}, this appendix
collects the model structure, all constant parameter values,
and the noise statistics used to construct $R$.

\paragraph{Data source.}
All calibrations are based on publicly available Italian
national surveillance data from the Italian Civil Protection
Department (\url{https://github.com/pcm-dpc/COVID-19}),
covering January~1 to May~30, 2021 (the ``Third Wave'').

\paragraph{Governing equations.}
The model is system~\eqref{eq:fvec}--\eqref{eq:strain_scaling}
with the state ordering $x = [S,E,V,I_1,I_2,P,H,R,F]^\top$,
inputs $u = [\beta(t), w_1(t)]^\top$ identified by
PCHIP~splines, and outputs $y = [H,F,V]^\top$.

\paragraph{Calibrated constant parameters.}
All values are those of Table~\ref{tab:params}. The calibration
procedure of~\cite{melhani2026arxiv} confirmed that the
constant parameters are identifiable from the Italian data
under the spline-input assumption; the identified values are
consistent with the published COVID-19 literature ranges
($\kappa = 1/5$~day, $\gamma_a = 0.2$~day$^{-1}$,
$\delta_i = 0.005$~day$^{-1}$, $\sigma = 0.8$ for the Pfizer
BNT162b2 vaccine during the Alpha wave~\cite{watson2022global}).

\paragraph{Measurement-noise covariance (basis for $R$).}
The companion study reports calibration RMSE values of
$1119$~persons ($H$), $1751$~deaths ($F$), and
$200{,}881$~doses ($V$). As discussed in
Section~\ref{sec:R}, the EKF measurement-noise standard
deviations are set to
\begin{equation}
\sigma_H = 1146.4 \;\text{persons},
\quad
\sigma_F = 1652.9 \;\text{persons},
\quad
\sigma_V = 5000 \;\text{doses}
\label{eq:app_residuals}
\end{equation}
and used directly in~\eqref{eq:Rcal}. The $H$ and $F$ values
track the companion calibration RMSE to within a few percent;
$\sigma_V$ is set well below the companion $V$ calibration RMSE
of $200{,}881$~doses, on the grounds (Section~\ref{sec:R}) that
the large $V$ residual reflects structural model--data mismatch
rather than measurement error in the centralized dose records.

\paragraph{Initial conditions.}
The initial state used for the state-estimation experiment
(1~January~2021), in the ordering
$x = [S, E, V, I_1, I_2, P, H, R, F]^\top$, is
\begin{equation*}
\resizebox{\textwidth}{!}{$
x_0 = [\,5.834\times 10^7,\; 4.298\times 10^4,\; 3.281\times 10^4,\;
4.117\times 10^3,\; 4.117\times 10^2,\;
4.117\times 10^3,\; 2.538\times 10^4,\;
1.480\times 10^6,\; 7.462\times 10^4\,]^\top
$}
\end{equation*}
(all in persons). The observed components $H_0, F_0$ and the
recovered value $R_0$ are read directly from the Italian Civil
Protection record for that date
(\emph{ospedalizzati}, \emph{deceduti}, and cumulative
\emph{guariti} respectively); the remaining components follow from
the calibrated initial-condition factors of~\cite{melhani2026arxiv}.
The filter is initialized from a $\pm 20\%$ perturbation of the
eight reconstructable components and a $\pm 2\%$ perturbation of
$R$, as described in Section~\ref{sec:init}.

\section{Analytical Verification Checklist for the
Observability Results}
\label{app:sympy_verif}

This appendix collects, in one place, every identity asserted
in Sections~\ref{sec:rank_deficiency}--\ref{sec:obs_result}
together with a brief indication of how each is verified
analytically. No computer algebra is required.

\subsection*{C.1\quad Lemma~\ref{lem:detO9}: determinant of
$\mathcal{O}_9$}

The closed-form determinant magnitude
\begin{equation*}
|\det(\mathcal{O}_9)|
= \delta_w\,\gamma_a^{\,2}\,\kappa\,\rho_2\,w_1^{\,2}\,
  (\delta_i-\delta_p)^2\,|r_1-r_2|
\end{equation*}
is proved in six elementary steps
(column permutation, two Laplace expansions, one row operation,
$3\times 3$ expansion, and assembly) given in full in
Appendix~\ref{app:detproof}. No entries of row~$9$ of
$\mathcal{O}_9$ other than
$\partial L_f^2 V/\partial R = w_1\delta_w$ are used; all
remaining entries are taken from rows~$4,5,7,8$ whose entries
follow directly from the model equations~\eqref{eq:dLf2h1}.

\subsection*{C.2\quad Rank of $\mathcal{O}_9$ at the calibrated
symmetric point}

From the assembly in Lemma~\ref{lem:detO9}, the factored form
$(\delta_p-\delta_i)\cdot\det(M_3)$ in Step~4 shows that
two distinct proportionality relations arise:
\begin{itemize}
\item \emph{Direction 1} ($I_2\leftrightarrow P$ swap):
  under $\delta_i = \delta_p$, rows~$4$ and~$5$ of
  $M_6$ satisfy row~$5 = (\delta_i/\gamma_a)\,\text{row}\;4$,
  producing at least one null direction.
  Combined with row~$6$ of $\mathcal{O}_1$ (which sees
  $I_2$ and $P$ identically under $c_2 = c_P$), the
  null direction can be taken as any
  $(v_{I_2}, v_P)$ with $v_{I_2} + v_P + v_{I_1} = 0$.
\item \emph{Direction 2} ($R$-anchored):
  under $r_1 = r_2$, the matrix $M_3$ computed in Step~5 of
  Lemma~\ref{lem:detO9} has $c_1 = c_2$ and $d_1 = d_2$
  (rows~$7,8$ of $\mathcal{O}_2$ become proportional in the
  $\{I_1, I_2\}$ block), so $\det(M_3) = 0$, producing a
  second independent null direction involving $R$.
\end{itemize}
These two directions are independent (direction 1 has
$v_R = 0$ while direction 2 has $v_R = 1$ as its dominant
component), so $\dim\ker(\mathcal{O}_9) \geq 2$, giving
$\mathrm{rank}(\mathcal{O}_9) \leq 7$.

For the matching lower bound $\mathrm{rank}(\mathcal{O}_9)
\geq 7$, observe that the seven columns
$\{V, H, F, S, E, R, I_1\}$ of $\mathcal{O}_9$ are
linearly independent: columns $V, H, F$ are covered by the
unit entries of $\mathcal{O}_0$; column $S$ is identified by
the entry $w_1 > 0$ in row~$6$; column $E$ by the entry
$\gamma_a\kappa > 0$ in row~$7$; column $R$ by the entry
$w_1\delta_w > 0$ in row~$9$; and column $I_1$ by the
entry $\gamma_a > 0$ in row~$4$ (which is linearly
independent of rows $5$--$9$ in the $I_1$ column once the
$\{V,H,F,S,E,R\}$ directions are fixed). This exhibits a
non-zero $7\times 7$ sub-determinant, confirming
$\mathrm{rank}(\mathcal{O}_9) \geq 7$.

Together: $\mathrm{rank}(\mathcal{O}_9) = 7$,
i.e.\ $\dim\ker(\mathcal{O}_9) = 2$ exactly.

\subsection*{C.3\quad Level-3 differences
(equations~\ref{eq:Lf3H_diff}--\ref{eq:Lf3F_diff})}

We give both derivations in full; no terms are abbreviated, and
every cancellation is exhibited explicitly. Throughout this
section we work at the calibrated symmetric kinetics
$\delta_i = \delta_p$, $r_1 = r_2$ (so $\alpha_P = \gamma_a +
\gamma_i + \delta_i + \mu$ and $\alpha_2 - \alpha_P = r_2$), since
that is the point at which $\mathcal{O}_9$ is rank-deficient and
the level-3 information must do the separating.

\medskip\noindent\emph{(a) The $H$-channel difference,
equation~\eqref{eq:Lf3H_diff}.}
By the chain rule $L_f^3 h_1 = L_f(L_f^2 H)$. From the
$\mathcal{O}_2$ block~\eqref{eq:dLf2h1}, the second Lie
derivative is the linear form
\begin{equation}
L_f^2 H
= \gamma_a\kappa\,E
- \gamma_a(\alpha_2+\alpha_H)\,(I_1+I_2)
- \gamma_a(\alpha_P+\alpha_H)\,P
+ \alpha_H^2\,H,
\label{eq:L2H_linear}
\end{equation}
which we verified is exact under $r_1=r_2$ (the $I_1$ and $I_2$
coefficients coincide because $\alpha_1 - m = \alpha_2$ there).
Because $L_f^2 H$ is linear in the state with constant
coefficients, its third Lie derivative is
$L_f^3 H = (\partial L_f^2 H/\partial x)\,f$, i.e.\ the same
fixed coefficient vector contracted with the vector field $f$.
Differentiating $L_f^3 H$ with respect to a state $x_\ell$
therefore gives, by the product rule applied to the linear form,
\begin{equation}
\frac{\partial L_f^3 H}{\partial x_\ell}
= \sum_{q}\Bigl(\frac{\partial L_f^2 H}{\partial x_q}\Bigr)
   \frac{\partial f_q}{\partial x_\ell},
\label{eq:Lf3_chain}
\end{equation}
since the coefficients $\partial L_f^2 H/\partial x_q$ are
constants. We apply~\eqref{eq:Lf3_chain} for
$\ell\in\{I_2,P\}$ and subtract. Only the columns
$q$ for which $\partial f_q/\partial I_2$ and
$\partial f_q/\partial P$ differ can survive the subtraction.
From the Jacobian~\eqref{eq:Jf}, the $I_2$ and $P$ columns
differ only in rows $q\in\{E,\,I_2,\,P,\,H\}$
(the force-of-infection entries in rows $S,E,V$ carry the same
multiplier $1.5\beta/N$ for both $I_2$ and $P$ and hence cancel;
the $F$ and $R$ rows are identical in the two columns under
$\delta_i=\delta_p$, $r_1=r_2$). Writing out the four surviving
contributions with the coefficients from~\eqref{eq:L2H_linear},
\begin{align}
\frac{\partial L_f^3 H}{\partial I_2}
-\frac{\partial L_f^3 H}{\partial P}
&= \underbrace{(\gamma_a\kappa)\bigl(\partial f_E/\partial I_2
   - \partial f_E/\partial P\bigr)}_{=\,0\ \text{(equal }1.5\beta(S+(1-\sigma)V)/N)}
\nonumber\\
&\quad
   - \gamma_a(\alpha_2+\alpha_H)\bigl(\partial f_{I_2}/\partial I_2\bigr)
   + \gamma_a(\alpha_P+\alpha_H)\bigl(\partial f_{P}/\partial P\bigr)
   + \alpha_H^2\bigl(\partial f_H/\partial I_2
        - \partial f_H/\partial P\bigr).
\label{eq:Lf3H_expand}
\end{align}
The last bracket vanishes because $\partial f_H/\partial I_2 =
\partial f_H/\partial P = \gamma_a$ (row $H$ of~\eqref{eq:Jf}).
Using $\partial f_{I_2}/\partial I_2 = -\alpha_2$ and
$\partial f_{P}/\partial P = -\alpha_P$ (the diagonal decay
entries), \eqref{eq:Lf3H_expand} collapses to
\begin{equation}
\frac{\partial L_f^3 H}{\partial I_2}
-\frac{\partial L_f^3 H}{\partial P}
= \gamma_a(\alpha_2+\alpha_H)\alpha_2
  - \gamma_a(\alpha_P+\alpha_H)\alpha_P
= \gamma_a(\alpha_2-\alpha_P)(\alpha_2+\alpha_P+\alpha_H).
\label{eq:Lf3H_final}
\end{equation}
Substituting $\alpha_2-\alpha_P = r_2$ yields
$\gamma_a r_2(\alpha_2+\alpha_P+\alpha_H)$, which
is~\eqref{eq:Lf3H_diff}. The factorization in the last step
of~\eqref{eq:Lf3H_final} is the elementary identity
$(a+c)a-(b+c)b=(a-b)(a+b+c)$ with $a=\alpha_2$, $b=\alpha_P$,
$c=\alpha_H$.

\medskip\noindent\emph{(b) The $F$-channel difference,
equation~\eqref{eq:Lf3F_diff}.}
The same machinery applies. Since $\dot F = \delta_i(I_1+I_2)
+\delta_p P+\delta_h H$ is already linear, $L_f F = \dot F$ and
the second Lie derivative is, under $\delta_i=\delta_p$,
\begin{equation}
L_f^2 F
= \delta_i(\dot I_1+\dot I_2)+\delta_i\dot P+\delta_h\dot H,
\label{eq:L2F_struct}
\end{equation}
obtained by differentiating the linear form $L_f F = \delta_i
(I_1+I_2)+\delta_i P+\delta_h H$ along $f$. Carrying out the
substitution of $\dot I_1,\dot I_2,\dot P,\dot H$ from~\eqref{eq:fvec}
and collecting, the state-gradient of $L_f^2 F$ has $I_1,I_2,P$
coefficients
$-\delta_i\alpha_1+\delta_h\gamma_a$,
$-\delta_i\alpha_2+\delta_h\gamma_a$,
$-\delta_i\alpha_P+\delta_h\gamma_a$ respectively (these are
exactly the entries of the $\partial L_f^2 h_2/\partial x$ row
in~\eqref{eq:dLf2h1} specialized to $\delta_p=\delta_i$).
Applying the constant-coefficient chain
rule~\eqref{eq:Lf3_chain} to $L_f^3 F$ and subtracting the
$I_2$ and $P$ derivatives, the same three structural
cancellations occur (the force-of-infection $E$-row terms are
equal and cancel; the $H$-row contributes $\delta_h\gamma_a$
identically to both and cancels), leaving only the diagonal
decay terms:
\begin{align}
\frac{\partial L_f^3 F}{\partial I_2}
-\frac{\partial L_f^3 F}{\partial P}
&= (-\delta_i\alpha_2+\delta_h\gamma_a)(-\alpha_2)
   -(-\delta_i\alpha_P+\delta_h\gamma_a)(-\alpha_P)
\nonumber\\
&= \delta_i(\alpha_2^2-\alpha_P^2)
   - \delta_h\gamma_a(\alpha_2-\alpha_P)
\nonumber\\
&= (\alpha_2-\alpha_P)\bigl[\delta_i(\alpha_2+\alpha_P)
     - \delta_h\gamma_a\bigr]
= r_2\bigl[\delta_i(\alpha_2+\alpha_P)-\delta_h\gamma_a\bigr],
\label{eq:Lf3F_final}
\end{align}
again using $\alpha_2-\alpha_P=r_2$. This is exactly
equation~\eqref{eq:Lf3F_diff}. Both closed
forms~\eqref{eq:Lf3H_final} and~\eqref{eq:Lf3F_final} were
additionally checked by an independent symbolic computation of
$L_f^3 h$ from the full vector field~\eqref{eq:fvec}; the
symbolic result agrees term-for-term, and the verification
code is available from the authors on request.

\subsection*{C.4\quad Level-3 breaking of the $R$-anchored
kernel direction}

The $R$-component of the $V$-row of $\mathcal{O}_3$ is
$\partial L_f^3 V/\partial R$.
From the model, $L_f^2 V$ contains the term $w_1\,\delta_w\, R$
(arising from $w_1$ applied to the $+\delta_w R$ term of $\dot S$).
Taking the third Lie derivative:
\begin{equation*}
\frac{\partial L_f^3 V}{\partial R}
= \frac{\partial(L_f^2 V)}{\partial S}\cdot\delta_w
+ \frac{\partial(L_f^2 V)}{\partial R}\cdot(-({\mu+\delta_w}))
= w_1\,\delta_w - w_1\,\delta_w\,(\mu+\delta_w)
= w_1\,\delta_w\,(1-\mu-\delta_w).
\end{equation*}
At calibrated values ($w_1=0.005$, $\delta_w=0.001$,
$\mu=3.5\times 10^{-5}$) this equals
$5.0\times 10^{-6}\,\mathrm{day}^{-3} \neq 0$.
Since the $R$-anchored kernel direction $v_2$ has $v_{2,R} = 1$
as its dominant component and all other components are
$O(10^{-2})$ or smaller (as seen from the kernel analysis of
Section~\ref{sec:rank_deficiency}), the dot product
$(\partial L_f^3 V/\partial x)\cdot v_2 \approx w_1\delta_w(1-\mu-\delta_w)
\neq 0$, confirming that this row of $\mathcal{O}_3$ is
linearly independent of $\ker(\mathcal{O}_9)$.

Combining Sections C.3 and C.4: both kernel directions of
$\mathcal{O}_9$ are broken by $\mathcal{O}_3$, so
$\mathrm{rank}(\mathcal{O}_{12}) = \mathrm{rank}(\mathcal{O}_9)
+ 2 = 9$.

\end{document}